\begin{document}


\numberwithin{equation}{section}

\newtheorem{theorem}[equation]{Theorem}
\newtheorem{lemma}[equation]{Lemma}
\newtheorem{conjecture}[equation]{Conjecture}
\newtheorem{proposition}[equation]{Proposition}
\newtheorem{question}[equation]{Question}
\newtheorem{corollary}[equation]{Corollary}
\newtheorem{cor}[equation]{Corollary}
\newtheorem*{theorem*}{Theorem}

\theoremstyle{definition}
\newtheorem*{definition}{Definition}
\newtheorem{example}[equation]{Example}

\theoremstyle{remark}
\newtheorem{remark}[equation]{Remark}
\newtheorem{remarks}[equation]{Remarks}
\newtheorem*{acknowledgement}{Acknowledgments}


\newenvironment{notation}[0]{%
  \begin{list}%
    {}%
    {\setlength{\itemindent}{0pt}
     \setlength{\labelwidth}{4\parindent}
     \setlength{\labelsep}{\parindent}
     \setlength{\leftmargin}{5\parindent}
     \setlength{\itemsep}{0pt}
     }%
   }%
  {\end{list}}

\newenvironment{parts}[0]{%
  \begin{list}{}%
    {\setlength{\itemindent}{0pt}
     \setlength{\labelwidth}{1.5\parindent}
     \setlength{\labelsep}{.5\parindent}
     \setlength{\leftmargin}{2\parindent}
     \setlength{\itemsep}{0pt}
     }%
   }%
  {\end{list}}
\newcommand{\Part}[1]{\item[\upshape#1]}

\renewcommand{\a}{\alpha}
\renewcommand{\b}{\beta}
\newcommand{\g}{\gamma}
\renewcommand{\d}{\delta}
\newcommand{\e}{\epsilon}
\newcommand{\f}{\phi}
\renewcommand{\l}{\lambda}
\renewcommand{\k}{\kappa}
\newcommand{\lhat}{\hat\lambda}
\newcommand{\m}{\mu}
\renewcommand{\o}{\omega}
\renewcommand{\r}{\rho}
\newcommand{\rbar}{{\bar\rho}}
\newcommand{\s}{\sigma}
\newcommand{\sbar}{{\bar\sigma}}
\renewcommand{\t}{\tau}
\newcommand{\z}{\zeta}

\newcommand{\D}{\Delta}

\newcommand{\gp}{{\mathfrak{p}}}
\newcommand{\gP}{{\mathfrak{P}}}
\newcommand{\gq}{{\mathfrak{q}}}
\newcommand{\gf}{{\mathfrak{f}}}

\newcommand{\Acal}{{\mathcal A}}
\newcommand{\Bcal}{{\mathcal B}}
\newcommand{\Ccal}{{\mathcal C}}
\newcommand{\Dcal}{{\mathcal D}}
\newcommand{\cD}{{\mathcal D}}
\newcommand{\Ecal}{{\mathcal E}}
\newcommand{\F}{{\mathcal F}}
\newcommand{\Fcal}{{\mathcal F}}
\newcommand{\cF}{{\mathcal F}}
\newcommand{\Gcal}{{\mathcal G}}
\newcommand{\Hcal}{{\mathcal H}}
\newcommand{\Ical}{{\mathcal I}}
\newcommand{\Jcal}{{\mathcal J}}
\newcommand{\Kcal}{{\mathcal K}}
\newcommand{\Lcal}{{\mathcal L}}
\newcommand{\cM}{{\mathcal M}}
\newcommand{\Mcal}{{\mathcal M}}
\newcommand{\Ncal}{{\mathcal N}}
\newcommand{\Ocal}{{\mathcal O}}
\newcommand{\Pcal}{{\mathcal P}}
\newcommand{\Qcal}{{\mathcal Q}}
\newcommand{\Rcal}{{\mathcal R}}
\newcommand{\Scal}{{\mathcal S}}
\newcommand{\Tcal}{{\mathcal T}}
\newcommand{\Ucal}{{\mathcal U}}
\newcommand{\Vcal}{{\mathcal V}}
\newcommand{\Wcal}{{\mathcal W}}
\newcommand{\Xcal}{{\mathcal X}}
\newcommand{\Ycal}{{\mathcal Y}}
\newcommand{\Zcal}{{\mathcal Z}}

\newcommand{\OO}{{\mathcal O}}    
\newcommand{\KK}{{\mathcal K}}    
\renewcommand{\O}{{\mathcal O}}   

\renewcommand{\AA}{\mathbb{A}}
\newcommand{\B}{\mathbb{B}}
\newcommand{\CC}{\mathbb{C}}
\newcommand{\EE}{\mathbb{E}}
\newcommand{\FF}{\mathbb{F}}
\newcommand{\GG}{\mathbb{G}}
\newcommand{\PP}{\mathbb{P}}
\newcommand{\NN}{\mathbb{N}}
\newcommand{\QQ}{\mathbb{Q}}
\newcommand{\RR}{\mathbb{R}}
\newcommand{\ZZ}{\mathbb{Z}}

\newcommand{\bfa}{{\mathbf a}}
\newcommand{\bfb}{{\mathbf b}}
\newcommand{\bfc}{{\mathbf c}}
\newcommand{\bfe}{{\mathbf e}}
\newcommand{\bff}{{\mathbf f}}
\newcommand{\bfg}{{\mathbf g}}
\newcommand{\bfp}{{\mathbf p}}
\newcommand{\bfr}{{\mathbf r}}
\newcommand{\bfs}{{\mathbf s}}
\newcommand{\bft}{{\mathbf t}}
\newcommand{\bfu}{{\mathbf u}}
\newcommand{\bfv}{{\mathbf v}}
\newcommand{\bfw}{{\mathbf w}}
\newcommand{\bfx}{{\mathbf x}}
\newcommand{\bfy}{{\mathbf y}}
\newcommand{\bfz}{{\mathbf z}}
\newcommand{\bfA}{{\mathbf A}}
\newcommand{\bfB}{{\mathbf B}}
\newcommand{\bfC}{{\mathbf C}}
\newcommand{\bfF}{{\mathbf F}}
\newcommand{\bfG}{{\mathbf G}}
\newcommand{\bfI}{{\mathbf I}}
\newcommand{\bfM}{{\mathbf M}}
\newcommand{\bfzero}{{\boldsymbol{0}}}
\newcommand{\bfmu}{{\boldsymbol\mu}}

\def\ta{{\tilde{a}}}
\def\tA{{\tilde{A}}}
\def\tb{{\tilde{b}}}
\def\tc{{\tilde{c}}}
\def\td{{\tilde{d}}}
\def\tf{{\tilde{f}}}
\def\tF{{\tilde{F}}}
\def\tg{{\tilde{g}}}
\def\tG{{\tilde{G}}}
\def\th{{\tilde{h}}}
\def\tK{{\tilde{K}}}
\def\tk{{\tilde{k}}}
\def\tz{{\tilde{z}}}
\def\tw{{\tilde{w}}}
\def\tphi{{\tilde{\varphi}}}
\def\tdelta{{\tilde{\delta}}}
\def\tbeta{{\tilde{\beta}}}
\def\talpha{{\tilde{\alpha}}}
\def\ttheta{{\tilde{\theta}}}
\def\tB{{\tilde{B}}}
\def\tR{{\tilde{R}}}
\def\tT{{\tilde{T}}}
\def\tE{{\tilde{E}}}
\def\tU{{\tilde{U}}}
\def\tGamma{{\tilde{\Gamma}}}

\newcommand{\ab}{{\textup{ab}}}
\newcommand{\Ahat}{{\hat A}}
\newcommand{\Aut}{\operatorname{Aut}}
\newcommand{\Cond}{{\mathfrak{N}}} 
\newcommand{\Disc}{\operatorname{Disc}}
\newcommand{\Div}{\operatorname{Div}}
\newcommand{\End}{\operatorname{End}}
\newcommand{\Frob}{\operatorname{Frob}}
\newcommand{\Fpbar}{{\overline{\FF}_p}}
\newcommand{\GK}{G_{\Kbar/K}}
\newcommand{\GL}{\operatorname{GL}}
\newcommand{\Gal}{\operatorname{Gal}}
\newcommand{\hhat}{{\hat h}}
\newcommand{\Image}{\operatorname{Image}}
\newcommand{\into}{\hookrightarrow}     
\newcommand{\Kbar}{{\bar K}}
\newcommand{\kvbar}{{{\bar k}_v}}
\newcommand{\Lbar}{{\bar L}}
\newcommand{\ellbar}{{\bar \ell}}
\newcommand{\Kvbar}{{{\bar K}_v}}
\newcommand{\MOD}[1]{~(\textup{mod}~#1)}
\newcommand{\Norm}{\operatorname{N}}
\newcommand{\notdivide}{\nmid}
\newcommand{\nr}{{\textup{nr}}}    
\newcommand{\ord}{\operatorname{ord}}
\newcommand{\Pic}{\operatorname{Pic}}
\newcommand{\Qbar}{{\bar{\QQ}}}
\newcommand{\Qpbar}{{\bar{\QQ}_p}}
\newcommand{\kbar}{{\bar{k}}}
\newcommand{\ksep}{{{k}^{\rm sep}}}
\newcommand{\QQbar}{{\bar{\QQ}}}
\newcommand{\rank}{\operatorname{rank}}
\newcommand{\res}{\operatornamewithlimits{res}}
\newcommand{\Resultant}{\operatorname{Res}}
\newcommand{\Res}{\operatorname{Res}}
\renewcommand{\setminus}{\smallsetminus}
\newcommand{\Spec}{\operatorname{Spec}}
\newcommand{\PGL}{\operatorname{PGL}}
\newcommand{\supp}{\operatorname{Supp}}
\newcommand{\tors}{{\textup{tors}}}
\newcommand{\val}{{\operatorname{val}}}
\newcommand{\<}{\langle}
\newcommand{\la}{{\langle}}
\renewcommand{\>}{\rangle}
\newcommand{\ra}{{\rangle}}
\newcommand{\Berk}{{\rm Berk}}
\newcommand{\BDV}{{\rm BDV}}
\newcommand{\Gauss}{{\rm Gauss}}
\newcommand{\Rat}{{\rm Rat}}
\newcommand{\RL}{{\rm RL}}
\newcommand{\HH}{{\mathbf H}}
\newcommand{\MM}{{\mathbf M}}
\newcommand{\Gm}{{\mathbf G}_m}
\newcommand{\Hhat}{{\hat{H}}}


\hyphenation{archi-me-dean}


\title[Preperiodic points and unlikely intersections]
{Preperiodic points and unlikely intersections}

\author{Matthew Baker}
\email{mbaker@math.gatech.edu}
\address{School of Mathematics,
          Georgia Institute of Technology, Atlanta GA 30332-0160, USA}

\author{Laura DeMarco}
\email{demarco@math.uic.edu}
\address{Department of Mathematics, Statistics, and Computer Science, 
University of Illinois at Chicago, Chicago, IL 60607-7045}

\date{August 2, 2010}

\begin{abstract}
In this article, we combine complex-analytic and arithmetic tools to study the preperiodic points of one-dimensional 
complex dynamical systems.  We show that for any fixed $a, b\in\CC$, and any integer $d \geq 2$, 
the set of $c\in\CC$ for which both $a$ and $b$ are preperiodic for $z^d+c$ is infinite if and only if $a^d = b^d$.  
This provides an affirmative answer to a question of Zannier, which itself arose from questions of Masser concerning simultaneous torsion sections on families of elliptic curves.
Using similar techniques, we prove that if rational functions 
$\varphi, \psi \in\CC(z)$ have infinitely many preperiodic points 
in common, then all of their preperiodic points coincide (and in particular 
the maps must have the same Julia set).  
This generalizes a theorem of Mimar, who established the same result assuming that $\varphi$ and $\psi$ are defined over $\Qbar$.
The main arithmetic ingredient in the proofs 
is an adelic equidistribution theorem for preperiodic points over number fields and function fields, 
with non-archimedean Berkovich spaces playing an essential role.
\end{abstract} 

\keywords{Preperiodic points, canonical heights, arithmetic dynamics, complex dynamics, potential theory, 
equidistribution, Berkovich spaces}

\thanks{The research was supported by the National Science Foundation 
and the Sloan Foundation.  The authors
would like to thank Rob Benedetto, Xander Faber, Joe Silverman, and Umberto Zannier for helpful discussions, Daniel Connelly for  
computations related to Conjecture~\ref{BCConjecture}, and the
anonymous referees for useful suggestions.  We also thank Sarah Koch for generating the images in Figure \ref{Figure 1}.  
Finally, we thank AIM for sponsoring the January 2008 workshop that inspired the results of this paper.}

\maketitle

\thispagestyle{empty}




\section{Introduction}
\label{IntroSection}

\subsection{Statement of main results}

A complex number $a$ is {\em preperiodic} for 
a polynomial map $f \in \CC[z]$ if the forward orbit of $a$ under iteration
by $f$ is finite.  In this article, we examine preperiodic points for the unicritical polynomials, those of the form $z^d + c$ for a complex parameter $c$.  The main result of this article is the following.  

\begin{theorem}
\label{MainTheorem}
Let $d \geq 2$ be an integer, and fix $a,b \in \CC$. The set of parameters $c \in \CC$ such that both $a$ and $b$ are 
preperiodic for $z^d + c$ is infinite if and only if $a^d = b^d$.
\end{theorem}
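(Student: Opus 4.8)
The ``if'' direction is essentially formal: if $a^d = b^d$, then for any $c$ we have $a^d + c = b^d + c$, so the forward orbits of $a$ and $b$ under $z^d+c$ coincide after one step. Hence $a$ is preperiodic if and only if $b$ is, and it suffices to exhibit infinitely many $c$ for which $a$ (equivalently $b$) is preperiodic. For instance, one can solve $f_c^{(2)}(a) = f_c(a)$, i.e. $(a^d+c)^d + c = a^d + c$, a polynomial equation in $c$ of degree $d^2 \geq 4$ with infinitely many solutions as we are free to also use higher iterates; more cleanly, the set of $c$ for which $a$ is preperiodic for $z^d+c$ is well known to be infinite (it contains, e.g., all $c$ making $a$ land on a periodic cycle, and these are cut out by infinitely many distinct polynomial conditions). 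So I would dispatch this direction in a sentence or two.

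The substance is the ``only if'' direction, which is where the arithmetic machinery enters. The plan is to argue by contradiction: assume $a^d \neq b^d$ and that the set $S$ of parameters $c$ with both $a,b$ preperiodic for $z^d+c$ is infinite. I would first set up two families of canonical heights on the parameter space: for each $x \in \CC$, let $\hhat_x(c)$ denote the canonical height of the point $x$ with respect to the map $z^d+c$, viewed as a function of $c$. The key structural input is that $c \mapsto \hhat_a(c)$ and $c \mapsto \hhat_b(c)$ arise from adelically metrized line bundles on the $c$-line (this is the content of the height machinery and the ``Call--Silverman'' type specialization results that the paper will have developed), and $\hhat_x(c) = 0$ precisely when $x$ is preperiodic for $z^d+c$ (assuming $x$ and $c$ are algebraic). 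After replacing $a,b,c$ by algebraic approximations / working over a suitable finitely generated field and using a specialization/density argument, the infinitude of $S$ forces both height functions to vanish on an infinite set of algebraic parameters of bounded degree, hence (by Northcott together with the functional/specialization properties) the two adelic metrics must be ``proportional,'' i.e. define the same height up to scaling.

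The next step is the equidistribution input. By the adelic equidistribution theorem for points of small height (the arithmetic ingredient advertised in the abstract), applied at each place $v$ — crucially including the Berkovich non-archimedean places — the Galois orbits of the parameters $c \in S$ equidistribute with respect to the equilibrium measure $\mu_{a,v}$ attached to the metrized bundle for $a$, and simultaneously with respect to $\mu_{b,v}$. Therefore $\mu_{a,v} = \mu_{b,v}$ at every place $v$. At the archimedean place this is a statement about the bifurcation measures on the $c$-line associated to the marked points $a$ and $b$: the measure $\mu_{a,\infty}$ is the harmonic-measure-type object supported where $\{c : a \text{ escapes}\}$ meets its boundary, explicitly $\mu_{a,\infty} = \Delta G_a$ where $G_a(c) = \lim d^{-n}\log^+|f_c^{(n)}(a)|$ is the escape-rate function. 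So the contradiction will come from showing $\Delta G_a = \Delta G_b$ as measures (together with matching conditions at the finite places) is impossible unless $a^d = b^d$.

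The main obstacle, and the heart of the argument, is this last rigidity step: deducing $a^d = b^d$ from the equality of all the local measures. I expect to handle it by a potential-theoretic comparison. From $\Delta G_a = \Delta G_b$ on $\CC$ and matching behavior near $c = \infty$ (both $G_a$ and $G_b$ grow like $\tfrac{1}{d-1}\log|c|$), one gets $G_a \equiv G_b$ identically on $\CC$. Now I would extract algebraic information by examining the two functions near $c = \infty$ more finely, or by using the non-archimedean places: the equality of equilibrium measures at a place of bad reduction, combined with explicit computation of the Berkovich equilibrium measures for $z^d+c$ with the marked points $a$ and $b$, should pin down $a$ and $b$ relative to each other $v$-adically for all $v$, forcing $a/b$ (when $b\neq 0$) to be a root of unity of order dividing $d$ and ultimately $a^d = b^d$. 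Concretely, one can also exploit that $G_a(c) = G_b(c)$ for all $c$ lets us compare the two power series / the two systems of defining equations for the ``$a$ preperiodic'' versus ``$b$ preperiodic'' loci: these loci must coincide as subsets of $\CC$, and a direct analysis of low-period conditions (e.g. fixed points: $a^d + c = a$ versus $b^d + c = b$ must have the same solution sets, etc.) quickly yields $a^d = b^d$. I would present the potential-theoretic route as the main line and the elementary comparison of preperiodicity loci as the finishing move.
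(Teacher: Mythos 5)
Your outline for algebraic $a,b$ --- the heights $h_{\MM_a}$, $h_{\MM_b}$ vanish on the Galois orbits of the parameters, adelic equidistribution applies at each place, and the equilibrium measures (equivalently the escape-rate functions $G_a$, $G_b$) coincide --- is essentially the paper's Case~1. The genuine gap is in your rigidity step. Equality of the preperiodicity loci in the $c$-plane cannot be compared ``period by period'': from ${\rm Preper}(a)={\rm Preper}(b)$ and $c=a-a^d\in{\rm Preper}(a)$ you only learn that $b$ is \emph{preperiodic} for $z^d+c$, not fixed, so the sets $\{c: a^d+c=a\}$ and $\{c: b^d+c=b\}$ need not coincide and your proposed elementary ``finishing move'' fails. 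Nor can the non-archimedean places pin $a,b$ down: already for Zannier's case $a=0$, $b=1$ one has $M_{0,v}=M_{1,v}=\cD(0,1)$ at every finite place (this happens whenever $|a|_v\le 1$), so all the rigidity must come from the archimedean place. What is actually required is the finer analysis at $c=\infty$ that you only allude to: a Koebe-type distortion estimate (Lemma~\ref{Glemma}) shows $a^d+c$ lies in the domain of the B\"ottcher coordinate $\phi_c$ for $|c|$ large, so $\Phi_a(c):=\phi_c(a^d+c)$ is conformal near $\infty$ with $\Phi_a'(\infty)=1$ and $G_a=\log|\Phi_a|$; then $G_a\equiv G_b$ forces $\Phi_a=\Phi_b$, and injectivity of $\phi_c$ gives $a^d=b^d$ (Proposition~\ref{M}, Theorem~\ref{Recovery}). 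Passing from $\mu_{M_a}=\mu_{M_b}$, or from ${\rm Preper}(a)={\rm Preper}(b)$, to $M_a=M_b$ also needs that $M_a$ is full and that $\partial M_a\subset\overline{{\rm Preper}(a)}$ (Montel); none of this is automatic.

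The transcendental case is the second genuine gap. ``Algebraic approximations'' cannot work (preperiodicity is not preserved under perturbation), and if $a\notin\Qbar$ the relevant field $k=\Qbar(a)$ has \emph{no} archimedean place, so the archimedean equidistribution statement your plan invokes does not exist; Northcott also fails over such a field, so the ``proportional adelic metrics'' step has no content. The paper instead equips $k$ with a function-field product formula, equidistributes the Galois orbits of the parameters on the Berkovich lines $\PP^1_{\Berk,v}$ at every (non-archimedean) place, deduces $M_{a,v}=M_{b,v}$ for all $v$, and then needs an arithmetic bridge --- Benedetto's theorem for non-isotrivial polynomials over function fields (Corollary~\ref{BenedettoCor}) --- to convert ``$c\in M_{a,v}$ for all $v$'' into ``$a$ is preperiodic for $z^d+c$''. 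Only then does one know ${\rm Preper}(a)={\rm Preper}(b)$ as subsets of $\CC$, after which the archimedean argument above (Theorem~\ref{Recovery}) finishes. Your sketch contains neither this bridge nor a workable substitute. Finally, in the easy direction, ``infinitely many distinct polynomial conditions'' does not by itself yield infinitely many distinct parameters; the paper derives infinitude of ${\rm Preper}(a)$ from $\gamma(M_a)=1$ (so $\partial M_a$ is infinite) together with $\partial M_a\subset\overline{{\rm Preper}(a)}$ via Montel (Lemma~\ref{MontelLemma}).
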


One direction of Theorem~\ref{MainTheorem} follows easily from Montel's theorem: if $a^d = b^d$ then $a$ is preperiodic for $z^d + c$ if and only if $b$ is preperiodic, and the set of complex numbers $c \in \CC$ such that $a$ is preperiodic for $z^d + c$ is always infinite (see \S\ref{mandelbrot section}).  The reverse implication combines ideas from number theory and complex analysis, the main arithmetic ingredient  being an equidistribution theorem for points of {\em small height} 
with respect to an {\em adelic height function} (see \S\ref{EquidistSection} below for 
details).  
When $a$ and $b$ are algebraic, the equidistribution in question takes place over $\CC$, 
but in the transcendental case, we require an equidistribution theorem which takes place
on the Berkovich projective line\footnote{The Berkovich projective line $\PP^1_{\Berk,K}$ is a canonical compact,
Hausdorff, path-connected space containing $\PP^1(K)$ as a dense subspace.  It is for many applications the ``correct'' setting for non-archimedean potential theory and dynamics, see e.g. \cite{BakerAWS,BRBook,BenAWS,CantatCL,CL,FRL1,FRL2,Riv1,Thuillier}.} $\PP^1_{\Berk,K}$ over some complete and algebraically closed 
non-archimedean field $K$.  
The idea is to think of the field $k = \Qbar(a) \subset \CC$ as the function field of $\PP^1_{\Qbar}$, and to consider the distribution of the preperiodic parameters $c \in \kbar$ inside a collection of non-archimedean Berkovich analytic
spaces, one for each completion of $k$.

\medskip

Our method of proof also provides an analog of Theorem \ref{MainTheorem} in the dynamical plane, which we state in the more general context of rational functions.  For a rational function $\varphi \in \CC(z)$, let ${\rm Preper}(\varphi)$ denote its set of preperiodic points in $\PP^1(\CC)$.  

\begin{theorem}
\label{DynamicalTheorem}
Let $\varphi,\psi \in \CC(z)$ be rational functions of degrees at least 2.
Then ${\rm Preper}(\varphi) \cap {\rm Preper}(\psi)$ is infinite if and only if
${\rm Preper}(\varphi) = {\rm Preper}(\psi)$.
\end{theorem}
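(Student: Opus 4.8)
The plan is to reduce the statement to an equidistribution argument for points of small height, run in parallel over all places of an appropriate field, exactly as in the proof of Theorem \ref{MainTheorem}. One direction is trivial: if $\mathrm{Preper}(\varphi) = \mathrm{Preper}(\psi)$ then the intersection is the full (infinite) set of preperiodic points. So assume $\mathrm{Preper}(\varphi) \cap \mathrm{Preper}(\psi)$ is infinite; we must show the two preperiodic sets coincide. The key invariants attached to a rational map $\varphi$ of degree $\geq 2$ are its \emph{canonical height} $\hhat_\varphi$ on $\PP^1(\Qbar)$ (when $\varphi$ is defined over a number field or function field) and, at each place $v$, the associated \emph{canonical measure} $\mu_{\varphi,v}$ on the Berkovich line $\PP^1_{\Berk,\CC_v}$, characterized by $\varphi^*\mu_{\varphi,v} = (\deg\varphi)\,\mu_{\varphi,v}$; over an archimedean place this is the classical measure of maximal entropy supported on the Julia set. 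A point is preperiodic for $\varphi$ exactly when $\hhat_\varphi$ vanishes at it.

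First I would handle the descent to a finitely generated field. A priori $\varphi,\psi \in \CC(z)$ need not be algebraic, so I would choose a finitely generated extension $L$ of $\QQ$ over which both $\varphi$ and $\psi$ are defined, together with enough of their common preperiodic points; spreading $L$ out, it is the function field of some variety over $\Qbar$ (or a number field), and one gets a product formula and an adelic set of places. On this field both $\hhat_\varphi$ and $\hhat_\psi$ are genuine Weil-type heights built from adelic metrics whose $v$-adic curvature forms are the canonical measures $\mu_{\varphi,v}$, $\mu_{\psi,v}$. Now consider the "difference" adelic metric / height $\hhat_\varphi - \hhat_\psi$, or better the symmetrized object $h_{\varphi,\psi} := \hhat_\varphi + \hhat_\psi$ attached to the sum of the two adelic divisors. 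The infinitely many common preperiodic points form a sequence of distinct points $x_n$ with $\hhat_\varphi(x_n) = \hhat_\psi(x_n) = 0$, hence $h_{\varphi,\psi}(x_n) \to 0$; they are therefore a "small sequence" for this adelic height. Applying the arithmetic equidistribution theorem (the one advertised in the abstract and \S\ref{EquidistSection}) at each place $v$, the Galois orbits of the $x_n$ equidistribute simultaneously toward $\frac{1}{2}(\mu_{\varphi,v} + \mu_{\psi,v})$ and toward $\mu_{\varphi,v}$ and toward $\mu_{\psi,v}$. Uniqueness of the equidistribution limit then forces $\mu_{\varphi,v} = \mu_{\psi,v}$ for every place $v$.

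Next I would convert the equality of canonical measures at all places into equality of preperiodic sets. At an archimedean place, $\mu_{\varphi,v} = \mu_{\psi,v}$ already gives the equality of Julia sets $J(\varphi) = J(\psi)$ asserted in the theorem. The stronger conclusion $\mathrm{Preper}(\varphi) = \mathrm{Preper}(\psi)$ I would extract from the equality of the \emph{adelic} metrics: the local heights (Green's functions) $g_{\varphi,v}$ and $g_{\psi,v}$ agree up to a harmonic (hence, by the product formula, globally trivial) correction, so $\hhat_\varphi = \hhat_\psi$ as functions on $\PP^1(\Lbar)$, whence the zero loci coincide. Equivalently, one can argue that if some point were preperiodic for $\varphi$ but not $\psi$, its Galois orbit would be a small sequence for $\hhat_\psi$ as well — contradicting that the preperiodic points of $\psi$ are "optimally distributed" unless they already include that orbit; a Bogomolov/Zhang-type rigidity for the difference height $\hhat_\varphi - \hhat_\psi$ (it has a nonnegative local decomposition at each place and its global sum over common preperiodic points is zero) pins it to be identically zero.

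The main obstacle is the transcendental/function-field equidistribution step: Mimar's original argument worked over $\Qbar$ where the classical Bilu–Fakhruddin–Autissier equidistribution applies, but here one needs equidistribution of small points with respect to an adelic height on $\PP^1$ over a general number field \emph{and} over function fields of positive-dimensional varieties, with the non-archimedean places genuinely contributing and the limiting measures living on Berkovich spaces. Making this work requires (i) verifying that $\hhat_\varphi$ is induced by a semipositive adelic metric with continuous local potentials on each $\PP^1_{\Berk,\CC_v}$, including at the places of bad reduction, (ii) an equidistribution theorem valid in this adelic Berkovich setting (the heart of \S\ref{EquidistSection}), and (iii) in the function-field case, checking the relevant heights are not "isotrivial" in a way that would make the product formula vacuous — i.e. ruling out the degenerate situation and thereby recovering, in the transcendental case, exactly the algebraic conclusion. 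Everything else — the trivial direction, the descent to a finitely generated field, and the passage from equal canonical measures to equal Julia/preperiodic sets — is comparatively routine potential theory.
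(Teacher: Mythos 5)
Your overall strategy coincides with the paper's: descend to a finitely generated field $k$ with a product formula, apply the adelic equidistribution theorem at every place $v$ to Galois-stable sets of common preperiodic points to obtain $\mu_{\varphi,v}=\mu_{\psi,v}$ for all $v$, pass from equality of canonical measures to $\hhat_\varphi=\hhat_\psi$ (the paper does this exactly as you indicate, via the normalized Arakelov--Green functions $g_{\varphi,v}$, which are determined by $\mu_{\varphi,v}$, and the identity $\hhat_\varphi(x)+\hhat_\varphi(y)=\sum_v N_v\,g_{\varphi,v}(x,y)$ evaluated at a common preperiodic point $y=a_n$), and finally compare the vanishing loci of the two heights.

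The genuine gap is the last step in the transcendental case. Over an abstract function field, ``$\hhat_\varphi(P)=0$ if and only if $P$ is preperiodic'' is false for isotrivial maps: if $\psi$ is conjugate over $\kbar$ to a map defined over the constant field $\Qbar$, every constant point has height zero but need not be preperiodic. So the degenerate situation you propose to ``rule out'' in item (iii) cannot be ruled out --- it genuinely occurs (for instance $\varphi$ may have algebraic coefficients while $\psi$ a priori does not) and must be handled by a separate argument. The paper does this with a case division: if neither map is conjugate to a $\Qbar$-map, Theorem \ref{BakerIsotrivialityTheorem} gives ``height zero iff preperiodic'' and the argument closes; if, say, $\varphi$ is conjugate over $\CC$ to a $\Qbar$-map, one conjugates both maps simultaneously, observes that $\varphi$ then has good reduction at every $v\in\cM_k$, so each $\mu_{\varphi,v}$ is a point mass at the Gauss point, and the equality $\mu_{\varphi,v}=\mu_{\psi,v}$ together with Proposition \ref{BakerIsotrivialityProp} forces $\psi$ to have good reduction everywhere, hence to be defined over $\Qbar$ as well, reducing everything to the number-field case where Northcott makes the height criterion unconditional. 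Without this (or a substitute), your proof does not cover the mixed case. Two smaller points: the Galois orbit of a single common preperiodic point need not have size tending to infinity, so one should feed the equidistribution theorem the unions $S_n=\bigcup_{m\le n}T_m$ of orbits, as the paper does; and the appeal to a ``Bogomolov/Zhang-type rigidity for the difference height'' is a gesture rather than an argument and is not needed once the Green-function identity is used.
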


As a consequence, we have:

\begin{cor}
\label{DynamicalCor}
Let $\varphi, \psi \in\CC(z)$ be rational functions of degrees at least 2  with Julia sets $J(\varphi) \not= J(\psi)$.  Then the set of points which are preperiodic for both $\varphi$ and $\psi$ is finite.  
\end{cor}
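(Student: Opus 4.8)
The plan is to derive Corollary~\ref{DynamicalCor} directly from Theorem~\ref{DynamicalTheorem} by contraposition, once we recall the basic fact from complex dynamics that the Julia set of a rational map of degree at least $2$ is the closure of its repelling periodic points, hence is contained in the closure of $\Preper(\varphi)$, while conversely every preperiodic point is either repelling or lies in the Fatou set but its orbit accumulates on $J(\varphi)$ in a controlled way. More precisely, the key observation is that $J(\varphi)$ is determined by $\Preper(\varphi)$: one has $J(\varphi) = \overline{\Preper(\varphi)} \setminus (\text{attracting cycles and their preimages})$, or more cleanly, $J(\varphi)$ equals the derived set (set of accumulation points) of $\Preper(\varphi)$ together with the repelling cycles, so that $\Preper(\varphi) = \Preper(\psi)$ forces $J(\varphi) = J(\psi)$.

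First I would argue as follows. Suppose, for contradiction, that $J(\varphi) \neq J(\psi)$ but that the set $\Preper(\varphi) \cap \Preper(\psi)$ is infinite. By Theorem~\ref{DynamicalTheorem}, the hypothesis that $\Preper(\varphi) \cap \Preper(\psi)$ is infinite immediately yields $\Preper(\varphi) = \Preper(\psi)$. It then remains only to show that $\Preper(\varphi) = \Preper(\psi)$ implies $J(\varphi) = J(\psi)$, which contradicts our assumption and finishes the proof. So the entire content beyond citing the theorem is this implication.

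To prove that implication, I would recall that for a rational map $\varphi$ of degree $\geq 2$, the set $\Preper(\varphi)$ is infinite and its set of accumulation points in $\PP^1(\CC)$ is exactly the Julia set $J(\varphi)$. This is standard: repelling periodic points are dense in $J(\varphi)$ (a classical theorem, e.g.\ Milnor or Beardon), so $J(\varphi) \subseteq \overline{\Preper(\varphi)}$; and since $J(\varphi)$ is perfect, $J(\varphi)$ is contained in the accumulation set; conversely, preperiodic points in the Fatou set are isolated among preperiodic points — an attracting or parabolic or Siegel/Herman-type cycle attracts only finitely many periodic points in its basin, and more to the point, the Fatou set contains only countably many preperiodic points with no accumulation in the Fatou set itself. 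Hence the accumulation set of $\Preper(\varphi)$ is precisely $J(\varphi)$. Since the accumulation set of a subset of $\PP^1(\CC)$ is intrinsically defined, $\Preper(\varphi) = \Preper(\psi)$ forces the accumulation sets to agree, i.e.\ $J(\varphi) = J(\psi)$.

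The only mild obstacle is making the statement ``the accumulation points of $\Preper(\varphi)$ are exactly $J(\varphi)$'' fully precise, since one must rule out accumulation of Fatou-set preperiodic points; but this follows from the standard classification of Fatou components and the fact that each cycle of Fatou components contains at most finitely many preperiodic points (attracting/parabolic case) or, in the rotation-domain case, the preperiodic points in a Siegel disk or Herman ring are exactly the center(s) and their preimages, which again do not accumulate in the Fatou set. I would simply cite a standard reference for this and present the corollary's proof in two or three lines. In fact, the cleanest route is: $\overline{\Preper(\varphi)} \supseteq \overline{J(\varphi)} = J(\varphi)$ always, and whenever $J(\varphi) \neq \PP^1(\CC)$ the extra preperiodic points in the Fatou set are isolated there, so $\overline{\Preper(\varphi)} = J(\varphi) \cup (\text{at most countably many isolated points})$ and $J(\varphi)$ is recovered as $\overline{\Preper(\varphi)}$ minus its isolated points; in all cases $J(\varphi)$ is an intrinsic function of $\Preper(\varphi)$, so the corollary follows.
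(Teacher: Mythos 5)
Your proposal is correct and follows essentially the same route as the paper: reduce to Theorem~\ref{DynamicalTheorem} and then observe that $J(\varphi)$ is exactly the set of accumulation points of ${\rm Preper}(\varphi)$, using density of repelling cycles in the (perfect) Julia set plus discreteness of preperiodic points in the Fatou set (the paper cites a reference for that last fact, just as you propose to do). No gaps worth noting.
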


When $\varphi$ and $\psi$ are defined over $\Qbar$, Corollary~\ref{DynamicalCor} is a special case of a result of Mimar
\cite{Mimar}.  In this setting, the proof of Theorem~\ref{DynamicalTheorem} shows that infinite intersection of ${\rm Preper}(\varphi)$ and ${\rm Preper}(\psi)$ implies equality of the canonical height functions $h_\varphi$ and $h_\psi$ on $\PP^1(\Qbar)$; this result is also proved in \cite{PetscheSzpiroTucker}.   In particular, the measures of maximal entropy for the complex dynamical systems $\varphi, \psi: \PP^1(\CC) \to \PP^1(\CC)$ must coincide.  For polynomials defined over a number field, we use a result of \cite{KawSilv} to strengthen the conclusion of Theorem \ref{DynamicalTheorem}; see Theorem \ref{RefinedDynamicalTheorem}.  

\medskip

The transcendental case of Theorem \ref{DynamicalTheorem}, which is handled using Berkovich spaces, is the main new ingredient.  We again deduce that canonical height functions $h_\varphi$ and $h_\psi$ are equal, only now they are defined over an appropriate function field.  In the absence of an archimedean absolute value, we are not able to deduce automatically that the measures of maximal entropy coincide, only that $\varphi$ and $\psi$ have the same set of preperiodic points.  

\medskip

Yuan and Zhang recently (and independently) proved a generalization of Theorem~\ref{DynamicalTheorem} to polarized algebraic dynamical systems of any dimension \cite{YuanZhang}.  Their proof also makes use of Berkovich spaces and equidistribution.

%

\medskip

The statements of Theorem \ref{MainTheorem} and Corollary \ref{DynamicalCor} are false if 
``are preperiodic for'' is replaced with ``are in the Julia set of''.    
For example, for any two points $a, b \in (-2, 2)$, there is an infinite family of polynomials $z^2 + c$, with 
$c \in\RR$ descending to $-2$, with Julia sets containing both $a$ and $b$ (in fact, containing a closed interval in 
$\RR$ increasing to $[-2,2]$ as $c \to -2$).  These same examples show that distinct Julia sets can have infinite 
intersections.  Because of such examples, it seems hard to prove results such as Theorems \ref{MainTheorem} and \ref{DynamicalTheorem} using complex analysis alone, without making use of some arithmetic information about preperiodic points. 

\medskip



In principle, it should be possible to extend the methods of the present paper to other one-parameter families of rational maps on $\PP^1(\CC)$ and to allow the points $a,b$ to depend algebraically on the parameter.   The special case we consider in Theorem \ref{MainTheorem} allows us a characterization by the simple condition $a^d \neq b^d$.  For more general one-parameter families $\{f_t\}$ and pairs $a(t), b(t)$, a precise condition characterizing finiteness is not as clear to formulate, and the complex analysis becomes more delicate.  However, most of the number-theoretic parts of our argument can easily be adapted to a more general setting.
It would also be interesting to study analogs of Theorem~\ref{MainTheorem} for, say, an $n$-dimensional family of rational maps in which $n+1$ points are required to be simultaneously preperiodic.

\subsection{Motivation and historical background}
The motivation for Theorem \ref{MainTheorem} came from a topic of discussion at the AIM workshop ``The uniform boundedness conjecture in arithmetic dynamics'' in Palo Alto in January 2008.  
By analogy with questions due to David Masser concerning 
simultaneous torsion sections on families of elliptic curves, Umberto Zannier asked:

\begin{question}
\label{MotivatingQuestion}
Is the set of complex numbers $c \in \CC$ such that $0$ and $1$ are both preperiodic for $z^2 + c$
finite?
\end{question}

Theorem \ref{MainTheorem} provides an affirmative answer to Question~\ref{MotivatingQuestion}, 
in analogy with the following recent theorem of Masser and Zannier:

\begin{theorem}  
\label{MZTheorem}  \cite{MasserZannier,MasserZannier2}
The set of complex numbers $\lambda \neq 0,1$ such that both $P_\lambda = (2,\sqrt{2(2-\lambda)})$
and $Q_\lambda = (3,\sqrt{6(3-\lambda)})$ have finite order on the Legendre elliptic curve
$E_\lambda$ defined by $Y^2 = X(X-1)(X-\lambda)$ is finite.
\end{theorem}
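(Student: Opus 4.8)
Although Theorem~\ref{MZTheorem} is quoted here rather than reproved, it is worth sketching how one would attack it by the methods of the present paper. The plan is to transport the height-and-equidistribution argument behind Theorem~\ref{MainTheorem} to the Legendre elliptic surface $\pi\colon\Ecal\to B$, where $B=\PP^1_\lambda\setminus\{0,1,\infty\}$ and where $P=(2,\sqrt{2(2-\lambda)})$ and $Q=(3,\sqrt{6(3-\lambda)})$ are viewed as sections $B\to\Ecal$. First one checks that $P$ and $Q$ are non-torsion in the generic fibre $\Ecal_{\overline{\QQ}(\lambda)}$, so that, the Legendre family being non-isotrivial with trivial trace, their N\'eron--Tate heights $c_P,c_Q$ over $\overline{\QQ}(\lambda)$ are strictly positive. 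By the theory of the variation of the canonical height in a family (Tate, Silverman, Call--Silverman), the function $h_P(\lambda_0):=\hat h_{\Ecal_{\lambda_0}}(P_{\lambda_0})$ extends to the height attached to an adelically metrized line bundle $L_P$ on $\PP^1_\lambda$ of positive degree, with $h_P=c_P\,h_B+O(1)$; its local canonical measures $\mu_{P,v}$ at the places $v$ play the role of the bifurcation measure of the pair $(z^d+c,a)$ in Theorem~\ref{MainTheorem}. Because the torsion locus of the single section $P$ is already infinite, $h_P$ has essential minimum $0$; and a parameter $\lambda_0$ as in the theorem is a point of height exactly $0$ for $L_P$ \emph{and} for $L_Q$.

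Suppose the set of such $\lambda_0$ were infinite. Pick an infinite sequence of distinct ones and apply the adelic equidistribution theorem for small points --- the same engine used in \S\ref{EquidistSection} --- once to $L_P$ and once to $L_Q$: the Galois orbits equidistribute simultaneously toward $\mu_{P,v}$ and toward $\mu_{Q,v}$, so $\mu_{P,v}=\mu_{Q,v}$ at every place $v$; in particular the two archimedean measures on $\PP^1(\CC)$ agree. One now wants a contradiction from the independence of the two sections: $\mu_{P,\infty}$ is, up to a constant, $\D_\lambda\,\hat h_{\Ecal_\lambda}(P_\lambda)$, computable analytically through the Betti (real-analytic period) coordinates of $P$, and one invokes the non-degeneracy fact that two sections of a non-isotrivial elliptic surface with identical bifurcation measures must be linearly dependent modulo torsion in $\Ecal_{\overline{\QQ}(\lambda)}$; since an explicit height computation shows $(2,\sqrt{2(2-\lambda)})$ and $(3,\sqrt{6(3-\lambda)})$ are independent, this is the desired contradiction.

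The crux --- and the main obstacle --- is exactly that last implication: producing ``$\mu_P=\mu_Q$ at every place'' is formal once the metrized bundles are in hand, but converting it into finiteness requires a genuine functional-transcendence input about the Betti map. This is the heart of the Masser--Zannier argument, which in its original form isolates it via a Pila--Zannier count: the $\gg n^{\delta}$ Galois conjugates of a ``bad'' $\lambda_0$ of torsion order $n$ yield that many rational points of height $\ll n$ on the graph of the Betti map of the pair $(P,Q)$ in $\RR^4$, contradicting the Pila--Wilkie bound $\ll_{\epsilon}n^{\epsilon}$ unless these points accumulate along a semialgebraic curve --- and the independence of $P$ and $Q$ is precisely what excludes such curves. (A sweeping generalization of this circle of ideas to arbitrary polarized dynamical systems is carried out by Yuan and Zhang \cite{YuanZhang}.)
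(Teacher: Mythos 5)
This statement is not proved in the paper at all: it is quoted verbatim from Masser and Zannier \cite{MasserZannier,MasserZannier2} as motivation for Question~\ref{MotivatingQuestion} and Theorem~\ref{MainTheorem}, so there is no internal argument for you to match, and any complete proof would have to reproduce the substance of their work. Judged on its own terms, your sketch is a reasonable outline of a strategy parallel to Theorem~\ref{MainTheorem}, but it contains a genuine gap at exactly the point where the theorem's content lives. The step you ``invoke'' --- that equality of the local measures $\mu_{P,v}=\mu_{Q,v}$ at every place forces the sections $P$ and $Q$ to be linearly dependent modulo torsion in the generic fibre --- is not a known black box you may cite here; it is the deep functional-transcendence/non-degeneracy input, and your own last paragraph concedes that supplying it requires the Pila--Wilkie/Betti-map argument of Masser--Zannier (or a rigidity theorem for the variation of canonical heights proved only later, in work of DeMarco--Mavraki and others). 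As written, the proposal reduces the theorem to an unproved assertion that is essentially equivalent to it.

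There is also a technical mismatch with the equidistribution engine you propose to reuse. Theorem~\ref{AdelicHeightEquiTheorem} is stated for heights attached to compact Berkovich adelic sets via capacities and Green's functions; the height $\lambda\mapsto\hat h_{\Ecal_\lambda}(P_\lambda)$ is not of this form, and to equidistribute its small points one needs an adelically metrized line bundle on $\PP^1_\lambda$ with continuous semipositive metrics at all places together with a Yuan/Thuillier/Chambert-Loir type theorem. Establishing that the Tate--Silverman--Call local height functions glue into such a metrized bundle (including at the singular fibres $\lambda=0,1,\infty$ and at the non-archimedean places) is itself a nontrivial piece of work that your sketch assumes. So the proposal is a useful roadmap, but it is not a proof, and it does not coincide with how the cited result was actually obtained.
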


\noindent
As in Theorem~\ref{MainTheorem}, one shows fairly easily that there are infinitely many $\lambda$ such that either $P_\lambda$ or $Q_\lambda$
alone has finite order; however, in each case the set of such $\lambda$ is rather sparse (for example, it is countable), 
and imposing both torsion conditions at once makes the set of $\lambda$ finite. 
There is nothing special about the numbers $2$ and $3$; Masser and Zannier have announced that they can extend the main result of 
\cite{MasserZannier,MasserZannier2} to much more general linearly independent `sections' $P_{\lambda},Q_{\lambda}$.

\medskip

As it stands, the proof of Theorem~\ref{MainTheorem} is not effective (nor is the proof of Theorem~\ref{MZTheorem}), as we have no control over the number of parameters $c$ for which two points might be simultaneously preperiodic.  For example, it is easily checked that $0$ and $1$ are both preperiodic for $z^2 + c$ when $c \in \{ 0,-1,-2 \}$.  A straightforward computation with Mathematica shows that there are no other values of $c$ for which $f_c^{(\ell_0)}(0) = f_c^{(m_0)}(0)$ and $f_c^{(\ell_1)}(1) = f_c^{(m_1)}(1)$ with $0 \leq m_i < \ell_i < 15$ for $i=0,1$ and $\ell_0 + \ell_1 < 20$.   Letting $S_{0,1}$ denote the set of parameters $c$ for which both $0$ and $1$ are preperiodic, the following conjecture seems plausible:

\begin{conjecture}
\label{BCConjecture}
$S_{0,1} = \{ 0, -1, -2 \}$.
\end{conjecture}


\subsection{Brief overview of the proof of Theorem~\ref{MainTheorem}}
A key role in the proof of Theorem~\ref{MainTheorem} is played by certain generalizations of the
famous {\em Mandelbrot set}.
For $a \in \CC$, let $M_a$ denote the set of all $c \in \CC$ such that
$a$ stays bounded under iteration of $z^d + c$.  When $d = 2$ and $a = 0$, $M_{a}$
is just the usual Mandelbrot set.  See Figure \ref{Figure 1}.  We let $\mu_{a}$ denote the {\em equilibrium measure} on $M_{a}$ relative to $\infty$, in the sense of complex potential theory; by classical results, $\mu_a$ is a probability measure whose support is equal to $\partial M_{a}$.

\begin{figure}[!h]
\scalebox{.38}{\includegraphics{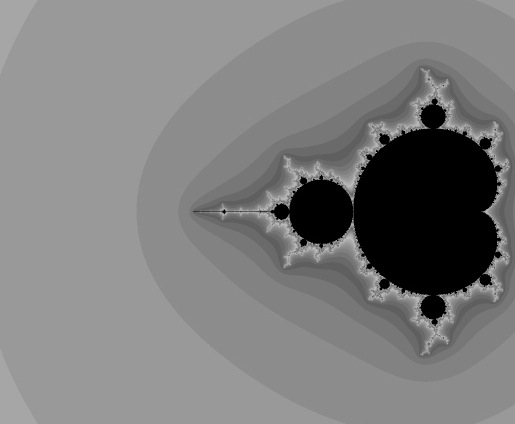}}\qquad\scalebox{.38}{\includegraphics{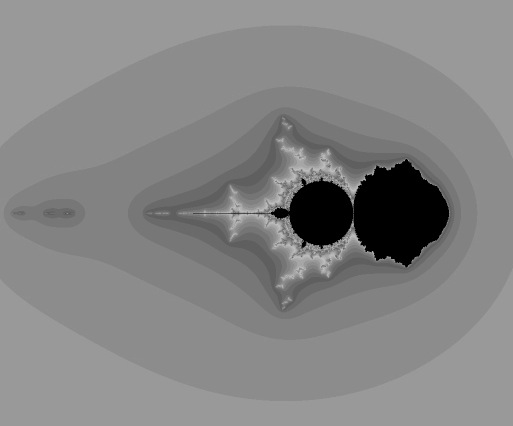}}
\caption{At left, the Mandelbrot set $M_0$, and at right, the set $M_1$ for the quadratic family $z^2+c$.  The images are centered at the same point; the shading indicates level sets of the Green's function.}
\label{Figure 1}
\end{figure}

\medskip

With this terminology in mind, an overview of the proof of Theorem~\ref{MainTheorem} is as follows.
Assume that there is an infinite sequence $c_1,c_2,\ldots$ of complex
numbers such that $a$ and $b$ are both preperiodic for $z^d + c_n$ for all $n$.

\medskip\noindent
{\bf Case 1:} $a,b$ are {\em algebraic} numbers.  In this case, all $c_n$'s must also be algebraic.
Let $\delta_n$ be the discrete probability measure on $\CC$ supported equally on the Galois conjugates of $c_n$.
Using the fact that $a$ is preperiodic for each $c_n$,
an arithmetic equidistribution theorem based on the product formula for number fields
shows that the measures $\delta_n$ converge weakly to the equilibrium measure $\mu_a$ for $M_a$ on $\PP^1(\CC)$.
By symmetry, the measures $\delta_n$ also converge weakly to $\mu_b$.
Thus $\mu_a = \mu_b$, which implies that $M_a = M_b$.
A complex-analytic argument using Green's functions and 
univalent function theory then shows that $a^d = b^d$.
(In the special case $a=0$ and $b=1$ corresponding to Question~\ref{MotivatingQuestion}, 
one can show directly that $M_0 \neq M_1$; for example, 
$i \in M_0$ but $i \not\in M_1$.  See Figure \ref{Figure 1}.)

\medskip\noindent
{\bf Case 2:} $a$ is transcendental.  In this case, one can show that $b$ is also transcendental, and that $a,b$, and all the $c_n$'s 
are defined over the algebraic closure $\kbar$ of $k = \Qbar(a)$ in $\CC$.  
The field $k$ is isomorphic to the field $\Qbar(T)$ of rational functions over the constant field $\Qbar$, 
and in particular $k$ has a (non-archimedean) product formula structure on it.  
An arithmetic equidistribution theorem based on the product formula for function fields,
together with the assumption that $a$ is preperiodic for each $c_n$,
shows that for every place $v$ of $k$, the $v$-adic analogue of the measures $\delta_n$ above
converge weakly on the Berkovich projective line $\PP^1_{\Berk,v}$ over $\CC_v$ 
to a probability measure whose support is the $v$-adic analogue $M_{a,v} \subseteq \PP^1_{\Berk,v}$ of $M_a$.
(Here $\CC_v$ denotes the completion of an algebraic closure of the $v$-adic completion $k_v$.)
By symmetry, it follows that $M_{a,v} = M_{b,v}$ for all places $v$ of $k$.
A theorem of Benedetto implies that for every $c \in \overline{k}$, and hence for every
complex number $c$, $a$ is preperiodic for $z^d + c$ if and only if $b$ is.  
We deduce using Montel's theorem that $M_a = M_b$, and finish the argument as in Case 1.  

\bigskip
\section{Potential theory background}

In this section we discuss some results from potential theory which are used in the rest of the paper.

\subsection{Complex potential theory}

Let $E$ be a compact subset of $\CC$.
The {\em logarithmic capacity} $\gamma(E)$ of $E$ relative to $\infty$ is $e^{-V(E)}$, where 
\begin{equation}
\label{CapacityDefinition}
-\log \gamma(E) = V(E) = \inf_{\nu} \iint_{E \times E} -\log |x-y| \, d\nu(x) \, d\nu(y). 
\end{equation}
The infimum in (\ref{CapacityDefinition}) is over all probability measures $\nu$ supported on $E$.
If $\gamma(E) > 0$ (equivalently, $V(E) < \infty$), 
then there is a unique probability measure $\mu_E$ which achieves
the infimum in (\ref{CapacityDefinition}), called the {\em equilibrium measure} for $E$.
The support of $\mu_E$ is contained in the ``outer boundary'' of $E$, i.e., in the 
boundary of the unbounded component $U_E$ of $\CC \backslash E$.

\medskip
If $\gamma(E) > 0$, the {\em Green's function}  $G_E$ is defined by
\[
G_E(z) = V(E) + \int_E \log |z - w| \; d\mu_E(w) ; 
\]
it is a nonnegative real-valued subharmonic function on $\CC$.  The following facts are well known; we include some proofs for lack of a convenient reference.

\begin{lemma}
\label{CpxGreenLemma}
Let $E$ be a compact subset of $\CC$ for which
$\gamma(E) = e^{-V(E)} > 0$, and let $U$ be the unbounded component of $\CC \backslash E$.  Then:
\begin{enumerate}
\item $G_E(z) = V(E) + \log |z| + o(1)$ for $|z|$ sufficiently large.
\item If $G : \CC \to \RR$ is a continuous subharmonic function which is harmonic on $U$, identically zero on $E$,
and such that $G(z) - \log^+|z|$ is bounded, then $G = G_E$.
\item If $G_E(z) = 0$ for all $z\in E$, then $G_E$ is continuous on $\CC$, 
$\supp \mu_E = \partial U$, and $G_E(z) > 0$ if and only if $z \in U$.
\end{enumerate}
\end{lemma}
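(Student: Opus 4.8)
The plan is to establish the three parts of Lemma~\ref{CpxGreenLemma} using standard tools of classical potential theory, namely properties of logarithmic potentials, the maximum principle for subharmonic functions, and Frostman's theorem. For part~(1), I would start from the defining formula $G_E(z) = V(E) + \int_E \log|z-w|\,d\mu_E(w)$ and analyze the potential term $\int_E \log|z-w|\,d\mu_E(w)$ for large $|z|$. Writing $\log|z-w| = \log|z| + \log|1 - w/z|$ and using that $\mu_E$ is a probability measure supported on the bounded set $E$, the integral becomes $\log|z| + \int_E \log|1-w/z|\,d\mu_E(w)$, and the remaining integral is $o(1)$ as $|z|\to\infty$ by dominated convergence (the integrand tends to $0$ uniformly in $w\in E$). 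This gives $G_E(z) = V(E) + \log|z| + o(1)$ directly.

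For part~(2), the strategy is a double application of the maximum principle. Given such a $G$, consider the difference $H = G - G_E$. By part~(1) and the hypothesis that $G(z) - \log^+|z|$ is bounded, $H$ is bounded near $\infty$; since $G_E$ is subharmonic and $G$ is harmonic on $U$, $H$ is superharmonic on $U$ (and likewise $-H$ is superharmonic, once one knows $G_E$ is harmonic on $U$, which follows because its Riesz measure $\mu_E$ is supported on $\partial U$). One then argues that $H$ extends to a bounded harmonic function on $U \cup \{\infty\}$ that vanishes on $\partial U$ — here I would invoke that $E$ has positive capacity so that $\partial U$ is non-polar and the generalized maximum principle applies — and conclude $H \equiv 0$ on $U$. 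On $E$ both functions vanish except possibly on a polar set (this is where the subtlety lies), and a final maximum-principle argument on the bounded components of $\CC\setminus E$, combined with the fact that a bounded subharmonic function that is $\le 0$ off a polar set is $\le 0$ everywhere, yields $G \le G_E$ globally; the reverse inequality is symmetric, giving $G = G_E$.

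For part~(3), assume $G_E \equiv 0$ on $E$. Continuity of $G_E$ on $\CC$ follows from continuity of the potential $\int_E \log|z-w|\,d\mu_E(w)$: it is automatically upper semicontinuous, and lower semicontinuity (hence continuity) on all of $\CC$ follows from continuity on $E$ together with the fact that a potential continuous on the support of its measure is continuous everywhere (a theorem of Maria/continuity principle). Since $G_E$ is harmonic and non-negative on $U$, tends to $+\infty$ at $\infty$, and is $0$ on $\partial U \subseteq E$, the minimum principle gives $G_E > 0$ throughout $U$; on the bounded components of $\CC \setminus E$ the function $G_E$ is harmonic, non-negative, and $0$ on the boundary, hence $\equiv 0$ there, so $G_E(z) > 0$ iff $z \in U$. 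Finally, $\supp\mu_E = \partial U$: the inclusion $\supp\mu_E \subseteq \partial U$ is the general fact quoted earlier, and for the reverse inclusion, if $\mu_E$ vanished on a neighborhood (in $\partial U$) of some boundary point, then $G_E$ would be harmonic across that piece of $\partial U$, contradicting that $G_E$ is $0$ on $E$ but positive on $U$ (a harmonic function cannot have an interior minimum).

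The main obstacle is part~(2): keeping careful track of the exceptional polar sets on which the various subharmonic functions involved need not vanish, and correctly invoking the extended maximum principle on the unbounded component $U$ to upgrade ``harmonic, bounded, zero on $\partial U$ off a polar set'' to ``identically zero''. The capacity hypothesis $\gamma(E) > 0$ is exactly what makes this work, since it guarantees $\partial U$ is not polar; without it the uniqueness statement fails. The other two parts are essentially bookkeeping with the continuity principle for potentials and the maximum principle.
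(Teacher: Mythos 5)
Your handling of parts (1) and (3) is fine and close in spirit to the paper's (the paper simply cites Ransford for (1), and in (3) gets continuity from upper semicontinuity plus nonnegativity rather than the continuity principle; these are cosmetic differences). For part (2) you take a genuinely different route: a two-sided maximum-principle comparison. The paper instead argues that, by Frostman, $G_E$ and $G$ agree off a polar set $e$; since polar sets are Lebesgue-null, the distributional Laplacians of the two subharmonic functions coincide, so by Weyl's lemma $f = G_E - G$ is harmonic on all of $\CC$, and Liouville together with boundedness gives $f \equiv 0$. Your approach avoids that distributional globalization, but it has a genuine gap at the step ``the reverse inequality is symmetric.''

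The situation is not symmetric. On a bounded component $V$ of $\CC \setminus E$, the function $G_E$ is harmonic, bounded, nonnegative, and equal to $0$ on $\partial V$ outside a polar set, so it vanishes identically on $V$; by contrast, the hypotheses only force $G \le 0$ on $V$ (it is subharmonic with boundary values $0$ on $\partial V \subseteq E$), and nothing gives $G \ge 0$ there. Indeed the inequality $G_E \le G$, and the conclusion $G = G_E$ itself, can fail on such components: take $E$ the unit circle and $G(z) = \log|z|$ for $|z| \ge 1$, $G(z) = (|z|^2 - 1)/2$ for $|z| \le 1$; this $G$ is continuous, subharmonic on $\CC$, harmonic on $U$, identically zero on $E$, with $G(z) - \log^+|z|$ bounded, yet $G \neq G_E = \log^+|z|$. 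So part (2) really needs either that $\CC \setminus E$ be connected (the only case used, since $M_a$ is full) or that $G$ vanish on all of $\CC \setminus U$; note that the paper's own proof passes over this silently when it asserts that $f = G_E - G$ vanishes quasi-everywhere on $\CC \setminus U$. Separately, even in the full case your polar-set fact cannot be applied to $G_E - G$, since a difference of subharmonic functions need not be subharmonic. To dispose of the exceptional set, apply the sub-mean-value inequality (polar sets being Lebesgue-null) to $G_E$ alone: for $z_0 \in e$ one gets $G_E(z_0) \le \limsup_{z \to z_0,\ z \notin e} G_E(z) = G(z_0) = 0$, which with $G_E \ge 0$ and continuity of $G$ finishes the argument; alternatively, follow the paper's Laplacian/Weyl/Liouville route.
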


\begin{proof}
Assertion (1) is \cite[Theorem 5.2.1]{Ransford}.

For (2), first note that $G_E$ is continuous at every point $q \in E$ where $G_E(q)=0$.
Indeed, $G_E$ is upper semicontinuous and bounded below by zero, so
\begin{equation}
\label{GreenContinuity}
0 \leq \liminf_{z \to q} G_E(z) \leq  \limsup_{z \to q} G_E(z) \leq G_E(q) = 0.
\end{equation}
By Frostman's Theorem (\cite[Theorem 3.3.4]{Ransford}), $G_E$ is 
identically zero on $\CC \backslash U$ outside a  set $e \subset \partial U$ of capacity $0$,
and hence the same is true for $f := G_E - G$.
Since $G_E$ is continuous on $\CC \backslash e$ 
and $G$ is continuous everywhere, $f$ is continuous outside $e$.
And by assumption, $f$ is harmonic and bounded on $U$.
By the Extended Maximum Principle \cite[Proposition 3.6.9]{Ransford}, 
we conclude that $f \equiv 0$ on $U$, and hence on $\CC \backslash e$.
Thus $G_E(z) = G(z)$ for all $z \in \CC \backslash e$.
Since $e$ has measure zero by \cite[Corollary 3.2.4]{Ransford},
the generalized Laplacians $\Delta(G_E)$ and $\Delta(G)$ coincide.   
Since $G_E$ and $G$ are both subharmonic on $\CC$,
it follows from Weyl's Lemma \cite[Lemma 3.7.10]{Ransford}
that $f$ is harmonic on all of $\CC$.
Since $f$ is also bounded, Liouville's Theorem \cite[Corollary 2.3.4]{Ransford}
implies that $f$ is identically zero.
This proves (2).

The continuity assertion in (3) follows from (\ref{GreenContinuity}), and the rest of (3) follows easily from the 
Maximum Principle.  
\end{proof}

\subsection{Non-archimedean potential theory}

In \cite{BRBook} (see also \cite{FRL2,Thuillier}), one finds non-archimedean Berkovich space analogs of 
various classical results from complex potential theory, including a
theory of Laplacians, harmonic functions, subharmonic functions, 
Green's functions, and capacities.  These results closely parallel
the classical theory over $\CC$.
For the reader's convenience, we give a quick summary in this section of the results from \cite{BRBook} which are used
in the present paper.\footnote{Amaury Thuillier has independently developed non-archimedean potential theory on $\PP^1_{\Berk,K}$
\cite{Thuillier}, and in fact his results are formulated in the context of arbitrary Berkovich curves,
and without assuming that the field $K$ is algebraically closed.
Also, Charles Favre and Juan Rivera-Letelier \cite{FRL1,FRL2} have independently developed most of the non-archimedean potential theory needed for the present applications to complex dynamics; their work relies heavily on potential theory for $\RR$-trees as developed in the book by Favre and Jonsson \cite{FJBook}.}  Although this theory is used heavily in the proofs of  Lemma~\ref{BerkGreenLemma}  and Theorem~\ref{AdelicHeightEquiTheorem}, the reader who wishes to accept these results as ``black boxes'' does not need a detailed understanding of non-archimedean potential theory in order to understand the proof of Theorem~\ref{MainTheorem} below.

\medskip

Let $K$ be an algebraically closed field which is complete with respect to some non-trivial absolute value $| \cdot |$.
The {\em Berkovich affine line} $\AA^1_{\Berk} = \AA^1_{\Berk,K}$ over $K$ is a 
locally compact, Hausdorff, path-connected space containing $K$
(with the given metric topology) as a dense subspace.  
As a topological space, $\AA^1_{\Berk,K}$ is the set
of all multiplicative seminorms $[ \cdot]_x : K[T] \to \RR$
on the polynomial ring $K[T]$ which extend the given absolute value on $K$, endowed with the weakest topology for which 
$x \mapsto [f]_x$ is continuous for all $f \in K[T]$.
The {\em Berkovich projective line} $\PP^1_{\Berk,K}$ can be identified with the 
one-point compactification of $\AA^1_{\Berk,K}$,
with the extra point denoted $\infty$.
It is a consequence of the Gelfand-Mazur theorem that if $K = \CC$, then $\AA^1_{\Berk,\CC}$ is homeomorphic
to $\CC$ (and $\PP^1_{\Berk}$ is homeomorphic to the Riemann sphere $\PP^1(\CC)$).
When $K$ is non-archimedean, however, there are infinitely many multiplicative seminorms $x \in \AA^1_{\Berk,K}$ which do 
not come from evaluation at a point of $K$; for example, the {\em Gauss point} $\zeta_{\Gauss} \in \AA^1_{\Berk,K}$
corresponds to the seminorm $[f]_{\zeta_{\Gauss}} := \sup_{z \in K, |z| \leq 1} |f(z)|$.

\medskip

For the rest of this section, we assume that the absolute value on $K$ is {\em non-archimedean} and non-trivial.
If $z \in \AA^1_{\Berk}$, we will sometimes write $|z|$ instead of
the more cumbersome $[T]_z$; the function $z \mapsto |z|$ is a 
natural extension of the absolute value on $K$ to $\AA^1_{\Berk}$.
Similarly, if $f \in K[T]$ we will sometimes write $|f(z)|$ instead of 
$[f(T)]_z$.

\medskip

There is a canonical extension of the fundamental potential kernel $-\log|x-y|$ to $\AA^1_{\Berk}$.
It can be defined as $-\log \delta(x,y)$,
where $\delta(x,y)$ (called the {\em Hsia kernel} in \cite{BRBook}) is defined as
\[
\delta(x,y) := \limsup_{\substack{z,w \in K \\ z \to x, w \to y}} |z - w|.
\]

Let $E$ be a compact subset of $\AA^1_{\Berk}$.
The {\em logarithmic capacity} $\gamma(E)$ of $E$ relative to $\infty$ is $e^{-V(E)}$, where 
\begin{equation}
\label{NonarchCapacityDefinition}
-\log \gamma(E) = V(E) = 
\inf_{\nu} \iint_{E \times E} -\log \delta(x,y) d\nu(x) \d\nu(y).
\end{equation}
The infimum in (\ref{NonarchCapacityDefinition}) is over all probability measures $\nu$ supported on $E$.
If $\gamma(E) > 0$ (equivalently, $V(E) < \infty$), 
there is again a unique probability measure $\mu_E$ which achieves
the infimum in (\ref{NonarchCapacityDefinition}), called the {\em equilibrium measure} for $E$
relative to $\infty$.
The support of $\mu_E$ is contained in the outer boundary of $E$ (the 
boundary of the unbounded component of $\AA^1_{\Berk} \backslash E$).

\medskip
If $\gamma(E) > 0$,
the {\em Green's function of $E$ relative to infinity} is defined by
\[
G_{E}(z) = V(E) + \int_E \log \delta(z,w) \; d\mu_E(w) ;
\]
it is a nonnegative real-valued subharmonic 
(in the sense of \cite[Chapter 8]{BRBook})
function on $\AA^1_{\Berk}$. 
For example, if $E = \cD(0,1)$ is the closed unit disc in $\AA^1_{\Berk}$, defined as
\[
\cD(0,1) = \{ x \in \AA^1_{\Berk} \; : \; |x| \leq 1 \},
\]
then
\[
G_{E}(z) = \log \max \{ |z|, 1 \}.
\]

The following is the non-archimedean counterpart of Lemma~\ref{CpxGreenLemma}:

\begin{lemma}
\label{BerkGreenLemma}
Let $E$ be a compact subset of $\AA^1_{\Berk}$ for which
$\gamma(E) = e^{-V(E)} > 0$, and let $U$ be the unbounded component of $\AA^1_{\Berk} \backslash E$.  Then:
\begin{enumerate}
\item $G_E(z) = V(E) + \log |z|$ for all $z \in \AA^1_{\Berk}$ with $|z|$ sufficiently large.
\item If $G : \AA^1_{\Berk} \to \RR$ is a continuous subharmonic function which is harmonic on $U$, identically zero on $E$,
and such that $G(z) - \log^+|z|$ is bounded, then $G = G_E$.
\item If $G_E(z) = 0$ for all $z\in E$, then $G_E$ is continuous on $\AA^1_{\Berk}$, 
$\supp \mu_E = \partial U$, and $G_E(z) > 0$ if and only if $z \in U$.
\end{enumerate}
\end{lemma}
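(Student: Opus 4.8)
The plan is to follow the structure of the proof of Lemma~\ref{CpxGreenLemma}, translating each complex-analytic ingredient into its Berkovich counterpart as developed in \cite{BRBook}. For assertion (1), the key point is that the potential kernel $-\log\delta(z,w)$ simplifies for $|z|$ large: if $|z| > |w|$ then $\delta(z,w) = |z|$, so once $|z|$ exceeds $\sup\{|w| : w \in E\}$ (which is finite by compactness of $E$) the integral $\int_E \log\delta(z,w)\,d\mu_E(w)$ equals $\log|z|$ exactly, giving $G_E(z) = V(E) + \log|z|$ on the nose — no $o(1)$ error, which is the expected non-archimedean improvement over the complex case. This should be a direct computation, and a reference to the corresponding statement in \cite{BRBook} (the Berkovich analogue of \cite[Theorem 5.2.1]{Ransford}) would suffice.

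For assertion (2), I would mirror the complex proof. First, upper semicontinuity of $G_E$ together with the bound $G_E \geq 0$ gives continuity of $G_E$ at every point $q \in E$ with $G_E(q) = 0$, exactly as in the inequality chain \eqref{GreenContinuity}. Then I invoke the Berkovich version of Frostman's theorem (available in \cite[Chapter 6--7]{BRBook}): $G_E$ vanishes on $\AA^1_{\Berk}\setminus U$ outside an exceptional set $e \subset \partial U$ of capacity zero. Setting $f := G_E - G$, one checks $f$ is continuous off $e$, harmonic and bounded on $U$, and then applies the Berkovich Maximum Principle (or its extended form) to conclude $f \equiv 0$ on $U$, hence on $\AA^1_{\Berk}\setminus e$. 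Since a set of capacity zero carries no mass for any measure of finite energy, the Laplacians $\Delta G_E$ and $\Delta G$ agree, so $\Delta f = 0$ as a distribution; the Berkovich analogue of Weyl's Lemma then forces $f$ to be harmonic everywhere, and boundedness plus the Berkovich Liouville theorem gives $f \equiv 0$. The main subtlety is making sure each of these potential-theoretic tools (extended maximum principle, Weyl's lemma, Liouville) is actually available in the form needed on $\AA^1_{\Berk,K}$ for $K$ algebraically closed and complete; this is where I would lean hardest on \cite{BRBook}, and I would flag that the set $e$ being of capacity zero implies it is ``small'' enough for the Laplacian comparison, which is the one step most likely to require a careful citation rather than a one-line argument.

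Assertion (3) is then immediate: the continuity of $G_E$ follows from \eqref{GreenContinuity} applied at every point of $E$ (since $G_E \equiv 0$ on $E$ by hypothesis, and $G_E$ is already continuous on $U$ where it is harmonic, and on the bounded components of the complement where it vanishes by the maximum principle), and the identification $\supp\mu_E = \partial U$ together with the strict positivity $G_E > 0 \iff z \in U$ follows from the Berkovich Maximum Principle in the same way as the classical case. I expect assertion (2) to be the main obstacle, specifically the verification that the exceptional set of capacity zero is negligible for the Laplacian comparison and the invocation of the Berkovich Weyl's lemma; everything else is either a short computation (assertion (1)) or a formal consequence of the maximum principle (assertion (3)).
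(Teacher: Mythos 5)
Your treatment of assertions (1) and (3) is fine: the exact identity $\delta(z,w)=|z|$ for $|z|>\sup_{w\in E}|w|$ does give $G_E(z)=V(E)+\log|z|$ on the nose (the paper simply cites \cite[Proposition 7.37]{BRBook} for this), and (3) goes through as you describe (the paper cites \cite[Corollary 7.39]{BRBook}). The structure of your argument for (2) --- Frostman-type vanishing off a capacity-zero set $e\subset\partial U$, continuity of $f=G_E-G$ off $e$, and the maximum principle on $U$ --- also matches the paper's proof up to the point where $G_E=G$ on $\AA^1_{\Berk}\backslash e$.

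The gap is in the step you yourself flagged: the claim that ``a set of capacity zero carries no mass for any measure of finite energy, hence $\Delta G_E=\Delta G$'' does not establish the Laplacian comparison. Knowing that $G_E=G$ off $e$ and that certain measures do not charge $e$ does not by itself identify the two Laplacians: $e$ need not be closed, $\AA^1_{\Berk}\backslash e$ need not be open, and you have no a priori control on whether $\Delta G$ charges $e$ (nor is ``finite energy'' among your hypotheses on $G$). The complex-analytic template (agree a.e.\ $\Rightarrow$ equal distributional Laplacians, then Weyl's lemma, then Liouville) is exactly what must be replaced here, and the replacement is genuinely non-archimedean: by \cite[Example 6.3]{BRBook} every point of $\PP^1_{\Berk}\backslash\PP^1(K)$ has positive capacity, so the exceptional set satisfies $e\subset\PP^1(K)$; by \cite[Remark 5.12]{BRBook} the Laplacian of a function on $\PP^1_{\Berk}$ depends only on its restriction to $\PP^1_{\Berk}\backslash\PP^1(K)$, so agreement off $e$ already forces $\Delta_{\AA^1_{\Berk}}(G_E)=\Delta_{\AA^1_{\Berk}}(G)$; and then \cite[Corollary 8.37]{BRBook} (two subharmonic functions with the same Laplacian which agree on $\AA^1_{\Berk}\backslash K$ coincide) finishes the proof directly. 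In particular, no Berkovich ``Weyl's lemma'' or ``Liouville theorem'' is needed, and neither is available off the shelf in \cite{BRBook} in the form you invoke; if you insist on the harmonic-difference endgame you would still have to prove a uniqueness statement of the Corollary~8.37 type, so you should route the argument through the three facts above rather than through the complex-analytic toolkit.
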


\begin{proof}
Assertion (1) follows from \cite[Proposition 7.37(A7)]{BRBook}, and
(3) is \cite[Corollary 7.39]{BRBook}.

For (2), note that by \cite[Proposition 7.37(A4)]{BRBook}, $G_E$ is 
identically zero on $\AA^1_{\Berk} \backslash U$ outside a set $e \subset \partial U$ of capacity $0$,
and hence the same is true for $f := G_E - G$.
Since $G_E$ is continuous on $\AA^1_{\Berk} \backslash e$ by \cite[Proposition 7.37(A5)]{BRBook}
and $G$ is continuous everywhere, $f$ is continuous outside $e$.
And by assumption, $f$ is harmonic and bounded on $U$.
By the Strong Maximum Principle \cite[Proposition 7.17]{BRBook}, 
we conclude that $f \equiv 0$ on $U$.
Thus $G_E(z) = G(z)$ for all $z \in \AA^1_{\Berk} \backslash e$.

Note that $G_E$ is subharmonic on $\AA^1_{\Berk}$ by \cite[Example 8.9]{BRBook}
and $G$ is subharmonic on  $\AA^1_{\Berk}$ by assumption.
Since $e \subset \PP^1(K)$ by \cite[Example 6.3]{BRBook}, and the Laplacian of a function on $\PP^1_{\Berk}$
depends only on its restriction to $\PP^1_{\Berk} \backslash \PP^1(K)$ (see \cite[Remark 5.12]{BRBook}), 
we have $\Delta_{\AA^1_{\Berk}}(G_E) = \Delta_{\AA^1_{\Berk}}(G)$.   
Since $G_E$ and $G$ are both subharmonic on $\AA^1_{\Berk}$,
have the same Laplacian, and agree on $\AA^1_{\Berk} \backslash K$,
it follows from \cite[Corollary 8.37]{BRBook} that $G = G_E$ on $\AA^1_{\Berk}$.
\end{proof}

\subsection{Adelic equidistribution of small points}
\label{EquidistSection} 

In this section, we state the arithmetic equidistribution result needed for our proof of 
Theorem~\ref{MainTheorem}.  In order to state the result (Theorem~\ref{AdelicHeightEquiTheorem} below),
we first need some definitions.

\begin{definition} \label{PFF}
A {\em product formula field}
is a field $k$, together with the following extra data:
\begin{itemize}
\item[(1)] a set $\cM_k$ of non-trivial absolute values on $k$ (which we may assume to be pairwise inequivalent), and
\item[(2)] for each $v \in \cM_k$, an integer $N_v \geq 1$
\end{itemize}
such that
\begin{itemize}
\item[(3)] for each $\alpha \in k^\times$, we have $|\alpha|_v = 1$ for all but finitely many $v \in \cM_k$, and
\item[(4)] every $\alpha \in k^\times$ satisfies the {\em product formula}
\index{product formula}%
\begin{equation*} 
\prod_{v \in \cM_k} |\alpha|_v^{N_v} \ = \ 1.
\end{equation*}  
\end{itemize}
\end{definition}

The most important examples of product formula fields are number fields and
function fields of normal projective varieties (see \cite[\S{2.3}]{LangDG} or \cite[\S{1.4.6}]{BombieriGubler}).
It is known (see \cite[Chapter 12, Theorem 3]{Artin}) that a product formula field for which at least one
$v \in \cM_k$ is archimedean must be a number field.
If all $v \in \cM_k$ are non-archimedean, then we define the {\em constant field} $k_0$ of $k$ 
to be the set of all $\alpha \in k$ such that $|\alpha|_v \leq 1$ for all $v \in \cM_k$.
By the product formula, if $\alpha \in k_0$ is nonzero then in fact $|\alpha|_v = 1$ for all $v \in \cM_k$.

\begin{remark}
\label{fgfieldremark}
Any finitely generated extension $k$ of an algebraically closed field $k_0$ can be endowed 
with a product formula structure in such a way that the field of constants of $k$ is $k_0$ 
(cf. \cite[Lemma 1.4.10]{BombieriGubler}).  
Indeed, every such field $k$ can be (non-canonically) identified with the function field of a normal projective variety $X/k_0$. 
Choosing such an $X$, one endows the function field with a product formula structure 
in which the places $v \in \cM_k$ (which are all non-archimedean) correspond to prime divisors on $X$.
See \cite[\S{1.4}]{BombieriGubler} for further details.
\end{remark}


\medskip

Let $k$ be a product formula field and let 
$\kbar$ (resp. $\ksep$) denote a fixed algebraic closure (resp. separable closure) of $k$. 
For $v \in \cM_k$, let $k_v$ be the completion of $k$ at $v$, 
let $\kvbar$ be an algebraic closure of $k_v$, 
and let $\CC_v$ denote the completion of $\kvbar$.  
For each $v \in \cM_k$, we fix an embedding of $\kbar$ in $\CC_v$ extending the canonical embedding of $k$ in $k_v$,
and view this embedding as an identification.  
By the discussion above, if $v$ is archimedean then $\CC_v \cong \CC$.
For each $v \in \cM_k$, we let $\PP^1_{\Berk,v}$ denote the Berkovich projective line over $\CC_v$, 
which we take to mean $\PP^1(\CC)$ if $v$ is archimedean.

\medskip

A {\em compact Berkovich adelic set} (relative to $\infty$) is a set of the form 
\[
\EE \ = \ \prod_v E_v
\] 
where  $E_v$ is a nonempty compact subset of 
$\AA^1_{\Berk,v} = \PP^1_{\Berk, v} \backslash \{ \infty \}$ for each $v \in \cM_k$,
and where $E_v$ is the closed unit disc $\cD(0,1)$ in $\AA^1_{\Berk,v}$
for all but finitely many nonarchimedean $v \in \cM_k$.

\medskip
For each $v \in \cM_k$, let $\gamma(E_v)$ be the logarithmic capacity 
of $E_v$ relative to $\infty$; see (\ref{CapacityDefinition}) and (\ref{NonarchCapacityDefinition}).
The {\em logarithmic capacity} (relative to $\infty$) of a compact Berkovich adelic set $\EE$, 
denoted $\gamma(\EE)$, is 
\[
\gamma(\EE) \ = \ \prod_{v} \gamma(E_v)^{N_v} \ .
\]
We will assume throughout the rest of this section that 
$\gamma(\EE) \neq 0$, i.e., that $\gamma(E_v) > 0$ for all $v \in \cM_k$.

\medskip
For each $v \in \cM_k$, let $G_v : \AA^1_{\Berk,v} \to \RR$ be the Green's function for 
$E_v$ relative to $\infty$, i.e., $G_v(z) = G_{E_v}(z)$.
If $S \subset \ksep$ is any finite set invariant under $\Gal(\ksep / k)$,
we define the {\em height of $S$ relative to $\EE$}, denoted $h_{\EE}(S)$, by
\begin{equation} \label{HeightOfSDef2}
h_{\EE}(S) \ = \ \sum_{v \in \cM_k} N_v \left( 
\frac{1}{|S|} \sum_{z \in S} G_v(z) \right)  .
\end{equation}
By Galois-invariance, the sum $\sum_{z \in S} G_v(z)$
does not depend on our choice of an embedding of $\kbar$ into $\CC_v$.  

\medskip
If $z \in \ksep$, let $S_k(z) = \{ z_1,\ldots,z_n \}$ denote
the set of $\Gal(\ksep/k)$-conjugates of $z$ over $k$, where $n = [k(z):k]$.
We define a function $h_{\EE} : \ksep \to \RR_{\geq 0}$ by setting $h_{\EE}(z) = h_{\EE}(S_k(z))$.
If $E_v = \cD(0,1)$ for all $v \in \cM_k$, then $G_v(z) = \log_v^+|z|_v$ for all $v \in \cM_k$ and all 
$z \in \ksep$, and $h_{\EE}$ coincides with the {\em standard logarithmic Weil height} $h$.

\medskip

Finally, we let $\mu_v$ denote the equilibrium measure for $E_v$ relative to $\infty$.
We can now state the needed equidistribution result \cite[Theorem 7.52]{BRBook}:

\begin{theorem}
\label{AdelicHeightEquiTheorem}
Let $k$ be a product formula field 
and let $\EE$ be a compact Berkovich adelic set with $\gamma(\EE) = 1$.
Suppose $S_n$ is a sequence of $\Gal(\ksep/k)$-invariant finite subsets of $\ksep$ with $|S_n| \to \infty$
and $h_{\EE}(S_n) \to 0$. 
Fix $v \in \cM_k$, and for each $n$ let $\delta_n$ be the discrete probability measure on
$\PP^1_{\Berk,v}$ supported equally on the elements of $S_n$.
Then the sequence of measures $\{\delta_n\}$ converges weakly
to $\mu_v$ on $\PP^1_{\Berk,v}$.
\end{theorem}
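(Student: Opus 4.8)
The plan is to deduce Theorem~\ref{AdelicHeightEquiTheorem} from the general adelic equidistribution machinery developed in \cite{BRBook}, specifically by recognizing that the height $h_{\EE}$ defined in (\ref{HeightOfSDef2}) is precisely the adelic height function attached to the ``$\EE$-Arakelov'' adelic metric, and that a compact Berkovich adelic set with $\gamma(\EE) = 1$ defines an adelic metric of ``zero height bias'' so that the hypotheses of the equidistribution theorem in \cite{BRBook} are met. The key point is that the normalization $\gamma(\EE) = 1$ is exactly what forces the ``Mahler-type'' constant appearing in the lower bound for $h_{\EE}$ on $\PP^1$ to vanish, so that $h_{\EE} \geq 0$ with equality characterized appropriately.

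First I would unwind the definitions: for each place $v$, the Green's function $G_v = G_{E_v}$ is continuous, nonnegative, harmonic off $E_v$, and satisfies $G_v(z) = \log|z|_v + V(E_v) + o(1)$ as $|z|_v \to \infty$ by Lemma~\ref{CpxGreenLemma}(1) and Lemma~\ref{BerkGreenLemma}(1). Thus $\lambda_v(z) := G_v(z)$ defines a continuous function on $\AA^1_{\Berk,v}$ whose Laplacian is the signed measure $\mu_v - \delta_\infty$ and which differs from the standard local height $\log_v^+|z|_v$ by a bounded continuous function; assembling these over all $v$ (noting $E_v = \cD(0,1)$, hence $G_v = \log_v^+|z|_v$, for all but finitely many $v$) yields a well-defined adelic metric on the line bundle $\OO(1)$ over $\PP^1_k$ in the sense of Zhang's adelic metrics, as formalized in \cite[Chapter 7]{BRBook}. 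The associated height is exactly $h_{\EE}$, and its ``essential minimum'' or the relevant normalization constant is $\sum_v N_v V(E_v) = -\log \gamma(\EE) = 0$.

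Next I would invoke the main equidistribution theorem of \cite{BRBook} (the cited \cite[Theorem 7.52]{BRBook}), whose hypotheses are: an adelic metric with the above regularity, the vanishing of the capacity-type normalization constant (equivalently $\gamma(\EE) = 1$, which also guarantees via the product formula that $h_{\EE} \geq 0$ on $\ksep$), and a sequence $S_n$ of Galois-stable finite sets with $|S_n| \to \infty$ and $h_{\EE}(S_n) \to 0$. Under these hypotheses the conclusion is precisely that, for each fixed $v$, the averaged point masses $\delta_n$ on $\PP^1_{\Berk,v}$ converge weakly to the equilibrium measure $\mu_v$. Since the theorem as stated in \cite{BRBook} is exactly this assertion, the proof reduces to checking that our setup is a special case — that the compact Berkovich adelic set $\EE$ gives rise to an admissible adelic metric and that our $h_{\EE}$ agrees with the height appearing there.

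The main obstacle — really the only nontrivial point — is the verification that the local Green's functions $G_v$ patch together into a bona fide adelic metric satisfying the technical hypotheses in \cite{BRBook}: namely that each $G_v - \log_v^+|z|_v$ extends to a bounded continuous function on $\PP^1_{\Berk,v}$ and that these are the trivial bounded function for cofinitely many $v$. For archimedean $v$ this is classical potential theory (continuity of $G_{E_v}$ requires regularity of $E_v$, but \cite{BRBook} is set up to handle general compact sets via the equilibrium potential, so full continuity is not needed — only the weaker condition in Lemma~\ref{CpxGreenLemma}(1)); for non-archimedean $v$ it follows from Lemma~\ref{BerkGreenLemma}(1), and the cofiniteness is immediate from the definition of a compact Berkovich adelic set, where $E_v = \cD(0,1)$ (so $G_v = \log_v^+|z|_v$ exactly) for all but finitely many $v$. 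Once this bookkeeping is in place, the statement is a direct citation.
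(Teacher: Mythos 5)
Your proposal is correct and takes essentially the same route as the paper: the paper offers no independent argument but simply invokes \cite[Theorem 7.52]{BRBook}, which is stated verbatim for compact Berkovich adelic sets with $\gamma(\EE)=1$ and the height $h_{\EE}$ of (\ref{HeightOfSDef2}). Your extra step of repackaging the Green's functions $G_v$ as a Zhang-style adelic metric on $\OO(1)$ is harmless but unnecessary, since the cited theorem already applies directly to the adelic set $\EE$ in exactly this form.
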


\begin{remark}
When $k$ is a number field, a slightly weaker version of 
Theorem~\ref{AdelicHeightEquiTheorem} is proved in 
\cite{BREQUI} and a closely related result (which also generalizes
Theorem~\ref{RationalEquiThm} below) is proved in \cite{FRL2}.
\end{remark}

For concreteness, we explicitly state the special case of Theorem~\ref{AdelicHeightEquiTheorem}
in which $k$ is a number field and $S_n$ is the $\Gal(\kbar/k)$-orbit of a point $z_n \in \kbar$
(note in this case that $h_{\EE}(z_n)\to 0$ implies $|S_n|\to\infty$ by Northcott's theorem):

\begin{corollary}
\label{BREQUIcor2}
Let $k$ be a number field, and let $\EE$ be a compact Berkovich adelic set with $\gamma(\EE) = 1$.
Suppose $\{z_n\}$ is a sequence of distinct points of
$\kbar$ with $h_{\EE}(z_n) \to 0$. 
Fix a place $v$ of $k$,
and for each $n$ let $\delta_n$ be the discrete probability measure on
$\PP^1_{\Berk,v}$ supported equally on the $\Gal(\kbar/k)$-conjugates of $z_n$.  
Then the sequence of measures $\{\delta_n\}$ converges weakly
to $\mu_v$ on $\PP^1_{\Berk,v}$.
\end{corollary}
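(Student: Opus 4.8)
The plan is to deduce the corollary directly from Theorem~\ref{AdelicHeightEquiTheorem}. Take $S_n = S_k(z_n)$, the set of $\Gal(\kbar/k)$-conjugates of $z_n$; since $k$ is a number field we have $\ksep = \kbar$, so each $S_n$ is a finite $\Gal(\ksep/k)$-invariant subset of $\ksep$, and $h_{\EE}(S_n) = h_{\EE}(z_n) \to 0$ by hypothesis. The measure $\delta_n$ in the corollary is exactly the uniform probability measure on $S_n$. Thus the corollary follows from Theorem~\ref{AdelicHeightEquiTheorem} as soon as we verify its remaining hypothesis, namely that $|S_n| = [k(z_n):k] \to \infty$.

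To check this, I would compare $h_{\EE}$ with the standard logarithmic Weil height $h$. For all but finitely many $v \in \cM_k$ we have $E_v = \cD(0,1)$, whence $G_v(z) = \log_v^+|z|_v$, so such places contribute $0$ to $h_{\EE}(z) - h(z)$. For each of the finitely many remaining places $v$, Lemma~\ref{CpxGreenLemma}(1) (when $v$ is archimedean) and Lemma~\ref{BerkGreenLemma}(1) (when $v$ is non-archimedean) show that $G_v(z) - \log_v^+|z|_v$ equals the constant $V(E_v)$ once $|z|_v$ is large, and hence is a bounded function on $\AA^1_{\Berk,v}$. Averaging over conjugates and summing over $v$ with the weights $N_v$ gives $|h_{\EE}(z) - h(z)| \le C$ for a constant $C$ depending only on $\EE$. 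In particular the heights $h(z_n)$ are uniformly bounded, so Northcott's theorem forces $[k(z_n):k] \to \infty$: otherwise some subsequence of the distinct points $z_n$ would consist of algebraic numbers of bounded degree and bounded height, of which there are only finitely many.

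Having verified both $|S_n| \to \infty$ and $h_{\EE}(S_n) \to 0$, we invoke Theorem~\ref{AdelicHeightEquiTheorem} to conclude that $\delta_n \to \mu_v$ weakly on $\PP^1_{\Berk,v}$ for each fixed $v \in \cM_k$ (with the convention $\PP^1_{\Berk,v} = \PP^1(\CC)$ when $v$ is archimedean), which is precisely the assertion of the corollary.

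I do not expect any real difficulty here: the corollary is a routine specialization of Theorem~\ref{AdelicHeightEquiTheorem}, and the only step meriting attention is the comparison of $h_{\EE}$ with the naive Weil height needed to apply Northcott's theorem, which itself rests solely on the normalization of the Green's functions for large $|z|_v$ recorded in Lemmas~\ref{CpxGreenLemma} and~\ref{BerkGreenLemma}.
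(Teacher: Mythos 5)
Your proof is correct and is essentially the paper's own argument: the corollary is stated there as the special case $S_n = S_k(z_n)$ of Theorem~\ref{AdelicHeightEquiTheorem}, with the parenthetical remark that $h_{\EE}(z_n)\to 0$ forces $|S_n|\to\infty$ by Northcott. Your verification of that step, via the bounded difference between $h_{\EE}$ and the standard Weil height coming from the normalization of the Green's functions, is exactly the intended justification.
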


\begin{remark}
When $k = \QQ$ and $\EE$ is the {\em trivial} Berkovich adelic set 
(i.e., $E_v$ is the $v$-adic unit disc for all $v$), 
Corollary~\ref{BREQUIcor2} is Bilu's equidistribution theorem \cite{Bilu} for $v$ archimedean, and it is Chambert-Loir's generalization of Bilu's theorem \cite{CL} for $v$ non-archimedean.
\end{remark}

\begin{remark}
If $k$ is a number field and $\gamma(\EE)<1$, there are only finitely many $z \in \kbar$ with $h_{\EE}(z) = 0$; this follows from
the adelic version of the Fekete-Szeg{\"o} theorem proved in
\cite[Theorem 6.28]{BRBook}.  This observation helps explain the
role played by the condition $\gamma(\EE) = 1$ in
Theorem~\ref{AdelicHeightEquiTheorem} and Corollary~\ref{BREQUIcor2}.
\end{remark}

\bigskip
\section{Generalized Mandelbrot sets}
\label{mandelbrot section}

Let $K$ be an algebraically closed field which is complete with respect to a nontrivial 
(archimedean or nonarchimedean) absolute value.
Fix an integer $d \geq 2$, and for $c \in K$ let $f_c(z) = z^d + c$.  
We denote by $f^{(n)}_c$ the $n^{\rm th}$ iterate of $f_c$.  In this section, we introduce a family of generalized Mandelbrot sets, defined as the set of parameters $c$ for which a given point $z=a$ remains bounded under iteration.  

\subsection{The archimedean case}
\label{complexM}
If $K = \CC$, define the {\em generalized Mandelbrot set} $M_a$ for $a \in \CC$ by
\begin{equation}
\label{ComplexMdef}
M_{a} = \left\{ c \in \CC \; : \; \sup_n |f^{(n)}_c(a)| <  \infty \right\}.
\end{equation}
When $d=2$ and $a = 0$, $M_a$ is the usual Mandelbrot set.  It is clear that every parameter $c \in K$ for which $a$ is preperiodic for $z^d + c$ is contained in $M_{a}$.  See Figure \ref{Figure 1}.  

\medskip

We need some basic potential-theoretic properties of $M_{a}$.  The proofs follow the same reasoning as for the Mandelbrot set, but we provide some details for the reader's convenience.  Recall that for each fixed $f_c$, the Green's function for the filled Julia set 
	$$K_c = \left\{z: \sup_n |f_c^{(n)}(z)| < \infty \right\}$$
of $f_c$ is given by the  {\em escape rate}
	$$G_c(z) = \lim_{n\to\infty} \frac{1}{d^n} \log^+ |f_c^{(n)}(z)|.$$
These escape-rate functions are continuous in both $c$ and $z$, and $G_c(z) = 0$ if and only if $z \in K_c$.  In fact, as a locally uniform limit of plurisubharmonic functions, the function
	$$(c,z) \mapsto G_c(z)$$
is plurisubharmonic on $\CC\times\CC$.  For each fixed $c$ there is an analytic homeomorphism $\phi_c$, defining the {\em B\"ottcher coordinate} $w = \phi_c(z)$ near $\infty$, which satisfies $\phi_c(f_c(z)) = (\phi_c(z))^d$ and $G_c(z) = \log|\phi_c(z)|$.  The map $\phi_c$ sends the domain 
	$$V_c := \{z\in\CC: G_c(z) > G_c(0)\}$$
conformally to the punctured disk $\{w \in\CC: |w| > e^{G_c(0)}\}$.  The B\"ottcher coordinate is uniquely determined if we require that $\phi_c$ has derivative 1 at infinity.   See for example \cite{Douady:Hubbard:etude}, 
\cite[\S{2.4}]{Carleson:Gamelin}, and \cite{Branner:Hubbard:1}.  

\begin{lemma}  \label{Glemma}
For each fixed $a\in\CC$, we have $G_c(a^d+c) > G_c(0)$ for all $c$ sufficiently large.  Consequently, the value $f_c(a)$ lies in the domain $V_c$ of the conjugating isomorphism $\phi_c$.  
\end{lemma}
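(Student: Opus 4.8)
The plan is to make everything explicit using the escape-rate formula for $G_c$ and a direct estimate on the iterates of $f_c$ at the point $a$ when $|c|$ is large. First I would fix $a$ and note that for $|c|$ large the first iterate $f_c(a) = a^d + c$ has absolute value roughly $|c|$; more precisely, $|f_c(a)| \geq |c| - |a|^d \to \infty$ as $|c| \to \infty$. Since $G_c(z) = \log^+|z| + o(1)$ uniformly for $z$ in bounded regions of the $(c,z)$-space (indeed $|G_c(z) - \log^+|z||$ is bounded by a constant depending only on $d$ once one has the standard telescoping estimate $|G_c(z) - \log|z|| \leq \sum_{n\geq 0} d^{-n-1}\log|1 + c/z^{d^n}\cdots|$ for $|z|$ large), the quantity $G_c(f_c(a))$ grows like $\log|c|$ as $|c|\to\infty$.

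Next I would control $G_c(0)$ from above. By the same escape-rate estimate applied along the orbit $0 \mapsto c \mapsto c^d + c \mapsto \cdots$, one has $G_c(0) = d^{-1} G_c(c) = d^{-1}(\log|c| + O(1))$ for $|c|$ large, since $f_c^{(n)}(0) = f_c^{(n-1)}(c)$ and $G_c(c) = \log|c| + o(1)$. Thus $G_c(0) \sim \tfrac{1}{d}\log|c|$, while $G_c(f_c(a)) = G_c(a^d+c) \sim \log|c|$. Comparing, for $|c|$ sufficiently large we get
\[
G_c(a^d + c) \geq \tfrac{1}{2}\log|c| > \tfrac{1}{d}\log|c| + O(1) = G_c(0) + o(1),
\]
so $G_c(a^d+c) > G_c(0)$, which is precisely the assertion $f_c(a) \in V_c$.

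The only mildly delicate point — the main obstacle, such as it is — is making the error terms uniform: one needs the $o(1)$ and $O(1)$ in the estimates $G_c(z) = \log|z| + o(1)$ and $G_c(0) = \tfrac1d\log|c| + O(1)$ to be genuinely independent of $c$ as $|c|\to\infty$. This follows from the standard quantitative B\"ottcher/escape-rate bounds: there is a constant $R = R(d)$ and a constant $C = C(d)$ so that $|z| \geq R$ and $|c| \leq |z|^{d-1}/2$ (say) force $|f_c(z)| \geq \tfrac12|z|^d$, and then $|G_c(z) - \log|z|| \leq C$; one checks that for $|c|$ large both $z = a^d+c$ (with $|z| \approx |c|$) and the tail of the orbit of $0$ lie in this regime. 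I would cite \cite[\S2.4]{Carleson:Gamelin} or \cite{Branner:Hubbard:1} for these uniform bounds rather than reprove them. Given the gap $1 < 1/d$ between the leading coefficients $\log|c|$ and $\tfrac1d\log|c|$, any such uniform $O(1)$ control is more than enough to conclude. Finally, once $G_c(f_c(a)) > G_c(0)$, the definition $V_c = \{z : G_c(z) > G_c(0)\}$ gives $f_c(a) \in V_c$ immediately, and the second sentence of the lemma follows since $\phi_c$ is defined and conformal on $V_c$.
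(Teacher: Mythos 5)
Your argument is correct, but it follows a genuinely different route from the paper. The paper does not compare escape rates directly: it works with the B\"ottcher coordinate and univalent function theory, applying the standard distortion bound ($|a_2|\le 2r$ for $h(z)=z+a_2z^2+\cdots$ univalent on a disk of radius $1/r$) to $\phi_c^{-1}$ to get the inclusion $V_c \supseteq \{|z|>2R_c\}$ with $R_c=e^{G_c(0)}$, and then to $\phi_c$ itself to get the two-sided comparison $R_c^d/2 \le |c| \le 2^dR_c^d$; the lemma follows because $|a^d+c| \ge R_c^d/2-|a|^d > 2R_c$ for $c$ large. Your proof instead compares $G_c(a^d+c)=\log|c|+O(1)$ with $G_c(0)=\tfrac1d G_c(c)=\tfrac1d\log|c|+O(1)$ using the telescoping escape-rate estimate, and wins from the gap between the coefficients $1$ and $1/d$. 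This is more elementary (no Koebe-type distortion theorem) and gives the sharper asymptotics for both Green's function values; what the paper's approach buys is the explicit geometric control $V_c\supseteq U_{2R_c}$ and the size of $R_c$ in terms of $|c|$, which fits naturally with its later use of the uniformization $\Phi_a(c)=\phi_c(a^d+c)$ in Proposition 3.3. One small slip to fix: your quantitative escape condition ``$|z|\ge R$ and $|c|\le |z|^{d-1}/2$'' is too strong to apply at the relevant points $z=c$ and $z=a^d+c$ when $d=2$ (there $|c|\le|z|^{d-1}/2\approx|c|/2$ fails); the correct standard condition is of the form $|c|\le |z|^{d}/2$ together with $|z|\ge R(d)$, which does hold for $z\approx c$ once $|c|$ is large, propagates along the orbit, and yields the uniform bound $|G_c(z)-\log|z||\le C(d)$ you need. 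With that adjustment your argument is complete.
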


\proof
The proof relies on a standard distortion theorem for univalent functions 
\cite[Theorem~14.14]{Rudin:realandcomplex}:  for any holomorphic function
	$$h(z) = z + \sum_{n\geq 2} a_n z^n$$
which is univalent on a disk of radius $1/r$, we have $|a_2| \leq 2 r$.  The conjugating isomorphism $\phi_c$ satisfies $\phi_c(z) = z + O(1/z)$ for $z$ near infinity.  Let $U_R$ denote the domain $\{|z| > R\}$ in the complex plane.  Setting $R_c = e^{G_c(0)}$, the inverse function $\phi_c^{-1} : U_{R_c} \to V_c$ is univalent on $U_{R_c}$, and it also has expansion $\phi_c^{-1}(z) = z+O(1/z)$.  For any $w\not\in V_c$, consider the univalent function 
	$$h_w(z) = \frac{1}{\phi_c^{-1}(1/z) - w} = z + w z^2 + \cdots$$
We conclude that $|w| \leq 2R_c$, and therefore $V_c \supseteq U_{2R_c}$.  This argument appears in \cite[\S3]{Branner:Hubbard:1}.

In particular, since $G_c(w) = G_c(0)/d$ and thus $w \not\in V_c$ for every $w$ with $f_c(w) = 0$,
the critical point $z=0$ and all of its preimages $(-c)^{1/d}$ must lie in the closed disk of radius $2 R_c$.  Thus 
$|c| \leq 2^dR_c^d$.  This implies that $R_c \to \infty$ as $c\to\infty$.   

Note that $|\phi_c(c)| = R_c^d$.  When $c$ is large enough so that $R_c^d/2 > 2R_c$, we apply the same distortion estimate to conclude that $\phi_c(U_{R_c^d/2}) \supseteq U_{R_c^d}$, so $|c| \geq R_c^d/2$.  It follows that for any fixed $a$, since $R_c\to\infty$ with $c$, we have $|a^d + c| \geq R_c^d/2 - |a|^d > 2R_c$ for all sufficiently large $c$.  Thus 
$f_c(a) = a^d+c$ lies in $V_c$ and has escape rate $G_c(a^d+c) > G_c(0)$.  
\qed

\begin{proposition}  \label{M}
For each $a\in\CC$, the generalized Mandelbrot set $M_a$ satisfies:
\begin{enumerate}
\item		$M_a$ is a compact and full subset of $\CC$;
\item		the function $G_a(c) := G_c(a^d+c)$ defines the Green's function for $M_a$ and satisfies $G_a(c) = 0$ for all $c\in M_a$;
\item		the function $\Phi_a(c) := \phi_c(a^d+c)$ defines a conformal isomorphism in a neighborhood of infinity, and it is uniquely determined by the conditions $G_a(c) = \log|\Phi_a(c)|$ and $\Phi_a'(\infty) = 1$;
\item		the logarithmic capacity is $\gamma(M_a) = 1$; and
\item		the support of the equilibrium measure $\mu_{M_a}$ on $M_a$ is equal to the boundary $\partial M_a$.
\end{enumerate}
\end{proposition}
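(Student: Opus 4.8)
The plan is to build everything on Lemma~\ref{Glemma}, which tells us that for $|c|$ large the point $f_c(a)=a^d+c$ lies in the B\"ottcher domain $V_c$, so that $G_a(c)=G_c(a^d+c)=\log|\phi_c(a^d+c)|=\log|\Phi_a(c)|$. First I would establish (3): near $\infty$ we have $\phi_c(z)=z+O(1/z)$ uniformly, and $z=a^d+c$, so $\Phi_a(c)=a^d+c+O(1/c)=c+O(1)$, hence $\Phi_a$ is holomorphic and injective for $|c|$ large with $\Phi_a'(\infty)=1$; the functional equation $\phi_c(f_c(z))=\phi_c(z)^d$ is not needed here, only the asymptotics and injectivity of $\phi_c$ on $V_c$, together with the fact (from Lemma~\ref{Glemma}) that $c\mapsto a^d+c$ lands in $V_c$ for large $c$. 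Uniqueness of $\Phi_a$ among functions conformal near $\infty$ satisfying $G_a=\log|\Phi_a|$ and $\Phi_a'(\infty)=1$ is standard: two such differ by a holomorphic function of modulus $1$, hence a constant, and the normalization at $\infty$ forces the constant to be $1$.

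Next I would prove (1) and the vanishing statement in (2). Compactness: $M_a$ is closed because $(c,z)\mapsto G_c(z)$ is continuous and $c\in M_a\iff G_a(c)=0$; it is bounded by the estimate $|c|\le 2^d R_c^d$ combined with $R_c\to\infty$ from Lemma~\ref{Glemma}, which shows $c\notin M_a$ once $|c|$ is large. That $c\in M_a\iff G_a(c)=0$ also gives $G_a\equiv 0$ on $M_a$, since $G_c(w)=0\iff w\in K_c$ and $a^d+c\in K_c\iff a\in K_c\iff c\in M_a$ (using $f_c(a)=a^d+c$ and forward-invariance of $K_c$). Fullness, i.e.\ that $\CC\setminus M_a$ is connected: since $G_a$ is subharmonic on $\CC$ (as the locally uniform limit, in $c$, of the subharmonic functions $c\mapsto \frac{1}{d^n}\log^+|f_c^{(n)}(a^d+c)|$; here the plurisubharmonicity of $(c,z)\mapsto G_c(z)$ on $\CC^2$ specializes along the analytic graph $z=a^d+c$), harmonic and positive on $\{G_a>0\}$, and $\to\infty$ at $\infty$ like $\log|c|$, any bounded component of the complement would force $G_a$ to attain an interior minimum on its closure by the minimum principle, contradicting $G_a>0$ there; hence there are no bounded complementary components and $M_a$ is full.

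With (1) and (2) in hand, (4) and (5) follow by potential theory. For (4): by part (1) of Lemma~\ref{CpxGreenLemma}, the Green's function of the compact set $M_a$ has the expansion $G_{M_a}(c)=V(M_a)+\log|c|+o(1)$ near $\infty$; but $G_a$ is a continuous subharmonic function, harmonic on the unbounded component $U$ of $\CC\setminus M_a$ (which, by fullness, is the whole complement), identically zero on $M_a$, with $G_a(c)-\log^+|c|$ bounded, so by part (2) of Lemma~\ref{CpxGreenLemma} we get $G_a=G_{M_a}$. Comparing the two expansions at $\infty$ — the intrinsic one gives constant term $V(M_a)$, while $G_a(c)=\log|\Phi_a(c)|=\log|c|+o(1)$ gives constant term $0$ — yields $V(M_a)=0$, i.e.\ $\gamma(M_a)=e^{-V(M_a)}=1$. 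Finally (5) is immediate from part (3) of Lemma~\ref{CpxGreenLemma}, since we have just shown $G_{M_a}=G_a\equiv 0$ on $M_a$, whence $\supp\mu_{M_a}=\partial U=\partial M_a$.

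The main obstacle is the subharmonicity and harmonicity of $G_a$ needed for fullness and for applying Lemma~\ref{CpxGreenLemma}(2): one must justify carefully that restricting the plurisubharmonic function $(c,z)\mapsto G_c(z)$ to the analytic subvariety $\{z=a^d+c\}$ yields a subharmonic function of $c$, that it is harmonic precisely on $\{G_a>0\}$ (where $\Phi_a$ provides a local harmonic representation $\log|\Phi_a|$, at least near $\infty$, and an analytic-continuation/no-critical-points argument extends harmonicity over all of $U$), and that the growth $G_a(c)-\log^+|c|$ is bounded globally, not just near $\infty$ — the latter following from continuity of $G_a$ and compactness of $M_a$. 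Everything else is either a direct consequence of Lemma~\ref{Glemma} or a routine application of the cited results from \cite{Ransford}.
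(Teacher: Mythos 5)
Your proposal follows the same skeleton as the paper's proof: Lemma~\ref{Glemma} controls the large-$|c|$ behavior, closedness comes from continuity of $(c,z)\mapsto G_c(z)$, fullness from a maximum-principle argument on bounded complementary components, the identification $G_a = G_{M_a}$ from Lemma~\ref{CpxGreenLemma}(2), capacity $1$ from the expansion $G_a(c)=\log|c|+o(1)$, and (5) from Lemma~\ref{CpxGreenLemma}(3) together with fullness. The issue is that the two ingredients you simply assert are exactly where the paper does its real work. For (3), you claim ``$\phi_c(z)=z+O(1/z)$ uniformly'' and deduce $\Phi_a(c)=a^d+c+O(1/c)$. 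As literally stated this is false: the coefficients of the B\"ottcher expansion depend on $c$, and already for $d=2$ the product expansion gives $\Phi_a(c)=c+a^2+\tfrac12+o(1)$, so the correction contributes a bounded but generally nonzero constant. More importantly, you never address why $c\mapsto\phi_c(a^d+c)$ is holomorphic in $c$ at all, which is needed both for ``conformal isomorphism near $\infty$'' and for your later use of $\log|\Phi_a|$ as a harmonic representative. The paper resolves both points at once with the explicit infinite product
$\phi_c(z)=z\prod_{n\ge 0}\bigl(1+c/(f_c^{(n)}(z))^d\bigr)^{1/d^{n+1}}$
(see \cite{Douady:Hubbard:etude}), valid at $z=a^d+c$ for large $c$ by Lemma~\ref{Glemma}: each factor is holomorphic in $c$, the product converges locally uniformly, and the factors tend to $1$, giving holomorphy and $\Phi_a'(\infty)=1$. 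Some such argument (or a genuinely uniform distortion estimate) must be supplied; the asymptotics alone, with constants implicitly depending on $c$, do not.

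The second weak spot, which you flag yourself, is harmonicity of $G_a$ on $\CC\setminus M_a$. Your proposed fix --- harmonicity near $\infty$ via $\log|\Phi_a|$ plus ``analytic continuation / no critical points'' --- is not a proof: $\log|\Phi_a|$ only represents $G_a$ near infinity, and harmonicity of $G_a$ on the rest of the unbounded (indeed, by fullness, the whole) complement is precisely what needs proving, not something one continues into. The paper's route is simpler and closes the gap cleanly: on any compact subset of $\CC\setminus M_a$ one has $G_a\ge\delta>0$, so for $n$ large the functions $c\mapsto d^{-n}\log|f_c^{(n)}(a^d+c)|$ have no zeros there, are harmonic, and converge locally uniformly to $G_a$; hence $G_a$ is harmonic off $M_a$ (and the same truncations with $\log^+$ give subharmonicity on all of $\CC$, which you also use for fullness). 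With these two repairs --- the product formula (or an honest uniform estimate) for (3), and the limit-of-harmonic-functions argument for harmonicity --- the rest of your argument goes through exactly as in the paper.
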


\proof
The set $M_a$ is closed because $M_a = \{c: G_c(a) = 0\}$ and $(c,z)\mapsto G_c(z)$ is continuous.  It is bounded by Lemma \ref{Glemma}:   for all sufficiently large $c$, the escape rate of $a$ is positive and therefore $f_c^{(n)}(a) \to \infty$.   The maximum principle applied to the subharmonic function $c\mapsto G_c(a)$ implies that $M_a$ is full (meaning that its complement is connected), completing the proof of statement (1).  

The conjugating isomorphisms $\phi_c$ satisfy 
	$$\phi_c(z) = z \prod_{n=0}^\infty \left( 1 + \frac{c}{(f^{(n)}_c(z))^d} \right)^{1/d^{n+1}} $$
on their domains $\{z: G_c(z)>G_c(0)\}$; see \cite{Douady:Hubbard:etude} or \cite[\S{VIII.3}]{Carleson:Gamelin}.  By Lemma \ref{Glemma}, the function $\Phi_a(c)/c = \phi_c(a^d + c)/c$ can be expressed by this infinite product for $c$ near infinity.  The terms in the infinite product each tend to 1 as $c\to \infty$, so (setting $\Phi_a(\infty) = \infty$) we conclude $\Phi_a'(\infty) = 1$.  In particular, $\Phi_a$ defines a conformal isomorphism in a neighborhood of infinity.  

The Green's function $G_c$ for the filled Julia set $K_c$ satisfies $G_c(z) = \log|\phi_c(z)|$ where defined.  The function $G_a(c) = G_c(a^d+c)$ therefore satisfies $G_a(c) = \log|\Phi_a(c)| = \log|c + O(1)| = \log|c| +  o(1)$ for all $c$ large.  Furthermore, $G_a$ is harmonic on $\CC\setminus M_a$, as a locally uniform limit of the harmonic functions $c \mapsto G_n(c) = d^{-n} \log |f_c^{(n)}(a^d + c)|$, and $G_a(c) = 0$ if and only if $c\in M_a$; we conclude that $G_a$ is the Green's function for $M_a$.  The conditions stated in (3) clearly determine $\Phi_a$ uniquely near $\infty$.  Statement (4) follows because $G_a(c) = \log|c| + o(1)$ near infinity.  

Finally, statement (5) follows from Lemma~\ref{CpxGreenLemma}, because $M_a$ is full.  
\qed

\medskip
Fix a degree $d\geq 2$.  For each $a \in \CC$, define
	$${\rm Preper}(a) := \{ c \in \CC \; : \; a \textrm{ is preperiodic for } z^d+c \}.$$
Combining Proposition \ref{M} with Montel's theorem, we obtain:  

\begin{theorem}  \label{Recovery}
For each degree $d\geq 2$ and any $a,b\in\CC$, the following are equivalent:
\begin{enumerate}
\item  $M_a = M_b$
\item $\mu_{M_a} = \mu_{M_b}$
\item  $a^d = b^d$
\item  ${\rm Preper}(a) = {\rm Preper}(b)$.
\end{enumerate}
\end{theorem}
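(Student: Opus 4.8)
The plan is to prove the cycle of implications $(3) \Rightarrow (1) \Rightarrow (4) \Rightarrow (3)$ together with $(1) \Leftrightarrow (2)$, since this covers all four equivalences with the least redundancy. The implication $(2) \Rightarrow (1)$ is immediate: the equilibrium measure of a compact full set determines that set (it is the support of $\mu$ together with the fact that the complement is connected, or alternatively because $\mu_{M_a}$ determines $G_{M_a}$ up to an additive constant, which is pinned down by $\gamma(M_a)=1$ and the normalization at $\infty$ from Proposition \ref{M}), and $(1) \Rightarrow (2)$ is trivial. The implication $(1) \Rightarrow (4)$ is also easy: if $M_a = M_b$, then since ${\rm Preper}(a) \subseteq M_a$ consists exactly of those $c$ for which the orbit of $a$ under $f_c$ is finite, it suffices to observe that $c \in {\rm Preper}(a)$ iff $c \in {\rm Preper}(b)$ — but this requires knowing more than just $M_a = M_b$ as sets. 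The cleaner route is $(1) \Rightarrow (3) \Rightarrow (4)$, so I will instead prove $(1) \Rightarrow (3)$, $(3) \Rightarrow (1)$, $(3) \Rightarrow (4)$, $(4) \Rightarrow (3)$, and $(1) \Leftrightarrow (2)$.

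For $(3) \Rightarrow (1)$ and $(3) \Rightarrow (4)$: if $a^d = b^d$, then $f_c(a) = a^d + c = b^d + c = f_c(b)$ for every $c$, so $f_c^{(n)}(a) = f_c^{(n)}(b)$ for all $n \geq 1$; hence the orbit of $a$ is bounded (resp. finite) if and only if the orbit of $b$ is, giving $M_a = M_b$ and ${\rm Preper}(a) = {\rm Preper}(b)$ at once. The implication $(4) \Rightarrow (3)$ I would handle via $(4) \Rightarrow (1) \Rightarrow (3)$: if ${\rm Preper}(a) = {\rm Preper}(b)$, then this common set is infinite (it contains infinitely many $c$, e.g. by solving $f_c^{(1)}(a) = a$, $f_c^{(2)}(a) = a$, etc., each a nontrivial polynomial equation in $c$), and it is contained in $\partial M_a \cap \partial M_b$ since preperiodic parameters have escape rate zero but lie in the closure of the escape locus — more precisely each such $c$ satisfies $G_a(c) = 0$ and lies in $\partial M_a = \supp \mu_{M_a}$. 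Since $\mu_{M_a}$ and $\mu_{M_b}$ are the equilibrium measures and the sets ${\rm Preper}(a) = {\rm Preper}(b)$ are dense in both $\partial M_a$ and $\partial M_b$... actually the cleanest deduction is to invoke the equidistribution machinery, but within the self-contained potential theory here one argues: ${\rm Preper}(a)$ accumulates on all of $\partial M_a$, so $\partial M_a \subseteq \overline{{\rm Preper}(a)} = \overline{{\rm Preper}(b)} = \partial M_b$, and by symmetry $\partial M_a = \partial M_b$; since both sets are full (Proposition \ref{M}(1)), $M_a = M_b$.

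The main obstacle is the implication $(1) \Rightarrow (3)$: deducing $a^d = b^d$ from the mere equality of the Mandelbrot-like sets $M_a = M_b$. Here I would use the Böttcher-type coordinates from Proposition \ref{M}(3): the maps $\Phi_a(c) = \phi_c(a^d + c)$ and $\Phi_b(c) = \phi_c(b^d + c)$ are both conformal isomorphisms from a neighborhood of $\infty$ onto a neighborhood of $\infty$, with $\Phi_a'(\infty) = \Phi_b'(\infty) = 1$, and each satisfies $\log|\Phi_a| = G_a$, $\log|\Phi_b| = G_b$. If $M_a = M_b$ then $G_a = G_b$ on all of $\CC$ (both equal the Green's function of the common compact full set, normalized by Proposition \ref{M}(2)), so $|\Phi_a| = |\Phi_b|$ near $\infty$, whence $\Phi_a / \Phi_b$ is a holomorphic function of modulus $1$ near $\infty$, hence a constant of modulus $1$; comparing derivatives at $\infty$ forces $\Phi_a = \Phi_b$ identically near $\infty$. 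Now I extract the asymptotic expansion: from the infinite product in the proof of Proposition \ref{M}, $\phi_c(z) = z(1 + c/z^d)^{1/d} + (\text{higher order}) = z + \frac{c}{d} z^{1-d} + \cdots$, so $\Phi_a(c) = (a^d + c)\bigl(1 + \tfrac{c}{(a^d+c)^d}\bigr)^{1/d}\cdots = c + a^d + O(c^{1-d}) + \cdots$ as $c \to \infty$. Hence the next-order term after $c$ in the expansion of $\Phi_a$ is the constant $a^d$ (for $d \geq 2$), and $\Phi_a = \Phi_b$ forces $a^d = b^d$. The delicate point is making the expansion $\Phi_a(c) = c + a^d + o(1)$ rigorous, i.e. controlling the infinite product uniformly for large $c$; this is routine given Lemma \ref{Glemma} (which puts $a^d + c$ well inside $V_c$) but should be written carefully. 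This step is essentially the "univalent function theory" argument alluded to in the introduction's sketch of Case 1.
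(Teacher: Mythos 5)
The fatal problem is in your step $(4)\Rightarrow(1)$. Your justification --- that preperiodic parameters ``have escape rate zero but lie in the closure of the escape locus,'' hence lie in $\partial M_a$, and later that $\overline{{\rm Preper}(b)} = \partial M_b$ --- is simply false: a preperiodic parameter can sit in the interior of $M_a$. For $d=2$, $a=0$, the parameters $c=0$ and $c=-1$ make $a$ periodic, yet both lie in the interior of the Mandelbrot set $M_0$. What you actually need is the opposite inclusion, $\partial M_a \subseteq \overline{{\rm Preper}(a)}$, and this you only assert (``${\rm Preper}(a)$ accumulates on all of $\partial M_a$'') without proof. That accumulation statement is the real content of the implication and is not a formal consequence of Proposition~\ref{M}; the paper proves it with Montel's theorem: the family $g_n(c)=f_c^{(n)}(a)$ is non-normal at every point of $\partial M_a$, and comparing it with the two holomorphic functions $h_1(c)=a$ and $h_2(c)=a^d+c$ (whose values are distinct away from the single parameter $c=a-a^d$, which itself lies in ${\rm Preper}(a)$) forces, in every neighborhood of a boundary point, a parameter with $f_c^{(n)}(a)=a$ or $f_c^{(n)}(a)=f_c(a)$. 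Without this (or some equivalent) argument your $(4)\Rightarrow(1)$ does not go through. Note also that even granting the accumulation statement, the finish should not be ``$\partial M_a=\partial M_b$'' (that again uses the false identity $\overline{{\rm Preper}(b)}=\partial M_b$); the correct deduction is $\partial M_a\subseteq\overline{{\rm Preper}(a)}=\overline{{\rm Preper}(b)}\subseteq M_b$, the symmetric inclusion, and fullness of both sets, which together give $M_a=M_b$.

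The rest of your plan is essentially the paper's: $(3)\Rightarrow(4)$ and $(3)\Rightarrow(1)$ by $f_c(a)=f_c(b)$, and $(1)\Leftrightarrow(2)$ via $\supp\mu_{M_a}=\partial M_a$ plus fullness. For $(1)\Rightarrow(3)$ your derivation of $\Phi_a=\Phi_b$ near $\infty$ is fine, but the paper then finishes more cheaply: $\phi_c(a^d+c)=\phi_c(b^d+c)$ for large $c$ and injectivity of the B\"ottcher coordinate $\phi_c$ on $V_c$ (both points lie there by Lemma~\ref{Glemma}) give $a^d+c=b^d+c$ directly, with no asymptotics. Your expansion route can be made to work, but as stated it is off for $d=2$: the first factor of the product contributes $\frac1d\,\frac{c}{(a^d+c)^{d-1}}\to\frac12$, so $\Phi_a(c)=c+a^2+\frac12+o(1)$ and the constant term is not $a^d$; since that extra constant is independent of $a$, the conclusion $a^d=b^d$ survives, but the claim needs this correction. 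Finally, your parenthetical argument that ${\rm Preper}(a)$ is infinite is both unnecessary for this theorem and not complete as written (that is Lemma~\ref{MontelLemma}, and it is proved there from capacity $1$ plus the very accumulation statement you skipped).
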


\proof
First suppose that $a^d = b^d$.  Then for every $c$, we have $f_c(a) = a^d + c = b^d + c = f_c(b)$, so $a$ is preperiodic for $f_c$ if and only if $b$ is preperiodic for $f_c$.  Thus (3) implies (4).  

Now assume (4) and consider the sequence of functions $g_n(c) := f_c^{(n)}(a)$.  This sequence forms a normal family except on the boundary $\partial M_a$.  Consider the holomorphic functions $h_1(c) = a$ and $h_2(c) = a^d+c$.  First note that $a^d + c = a$ implies that $a$ is a fixed point for $f_c$, so $c = a - a^d \in {\rm Preper}(a) \subset M_a$.  Now fix an open set $U$ intersecting $\partial M_a$ which does not contain the parameter $c = a - a^d$, so the images of $h_1$ and $h_2$ are distinct throughout $U$.   Then by Montel's theorem, the failure of normality of $\{g_n\}$ implies that the image of $g_n$ must coincide with that of $h_1$ or $h_2$ for some $n$ and some $c\in U$.  In particular, there is an iterate $n$ so that either $f^{(n)}_c(a) = a$ or $f^{(n)}_c(a) = f_c(a)$; we conclude that the set $U$ must intersect ${\rm Preper}(a)$.   Consequently, the boundary $\partial M_a$ is contained in the closure of ${\rm Preper}(a)$.  
As ${\rm Preper}(a) \subset M_a$ by definition and $M_a$ is a full set by Proposition \ref{M} (1), the assumption 
${\rm Preper}(a) = {\rm Preper}(b)$ implies that $M_a = M_b$, i.e., (4) implies (1).

Assume that $M_a = M_b$.  Clearly the equilibrium measures coincide, so $\mu_{M_a} = \mu_{M_b}$, giving (1) $\implies$ (2).  Conversely, the support of $\mu_{M_a}$ is equal to the boundary $\partial M_a$, so again using the fact that $M_a$ is a full set we conclude that (2) implies (1).  

Finally, if $M_a = M_b$, then by Proposition \ref{M} (3) the uniformizing maps $\Phi_a$ and $\Phi_b$ coincide on a neighborhood of infinity.  In other words, for all large $c$, we have $\phi_c(a^d + c) = \phi_c(b^d + c)$.  The conjugating isomorphisms $\phi_c$ are injective, so we conclude that $a^d + c = b^d +c$.  This shows that (1) implies (3), completing the proof.  
\qed

\medskip
The following simple statement is used for one implication of Theorem \ref{MainTheorem}.  
 
\begin{lemma}
\label{MontelLemma}
For each $a \in \CC$, the set ${\rm Preper}(a)$ is infinite.
\end{lemma}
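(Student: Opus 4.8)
The plan is to show that the Mandelbrot-like set $M_a$ has nonempty boundary, and then extract infinitely many preperiodic parameters from $\partial M_a$ using the normality argument already developed in the proof of Theorem~\ref{Recovery}. First I would observe that $M_a$ is a nonempty compact full subset of $\CC$ with logarithmic capacity $\gamma(M_a) = 1 > 0$ by Proposition~\ref{M}; in particular $M_a$ is not a single point, and being compact but not all of $\CC$, it has nonempty boundary $\partial M_a$. (Concretely, $c = a - a^d \in M_a$ since it makes $a$ a fixed point, while all sufficiently large $c$ lie outside $M_a$ by Lemma~\ref{Glemma}.) So $\partial M_a \neq \emptyset$.

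Next I would invoke the Montel-theorem argument verbatim from the proof of the implication (4)$\Rightarrow$(1) in Theorem~\ref{Recovery}: the family $g_n(c) := f_c^{(n)}(a)$ fails to be normal at every point of $\partial M_a$ (since arbitrarily close to such a point there are $c$ with $g_n(c)\to\infty$ and also $c\in M_a$ with $g_n(c)$ bounded), whereas the two holomorphic functions $h_1(c) = a$ and $h_2(c) = a^d + c$ have distinct graphs on any open set $U$ avoiding the single parameter $c = a - a^d$. Fixing such a $U$ meeting $\partial M_a$, Montel's theorem forces some $g_n$ to agree with $h_1$ or $h_2$ at some point of $U$, i.e. there exists $c_1 \in U$ with $f_{c_1}^{(n)}(a) = a$ or $f_{c_1}^{(n)}(a) = f_{c_1}(a)$ for some $n$; in either case $a$ is preperiodic for $z^d + c_1$, so $c_1 \in {\rm Preper}(a)$.

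To get infinitely many such parameters, I would iterate: since $\partial M_a$ is a compact set with no isolated points (it is the support of the equilibrium measure $\mu_{M_a}$, a measure with connected, uncountable support — or more simply, $\partial M_a = \supp\mu_{M_a}$ is perfect because equilibrium measures of nonpolar sets have no atoms), one can choose a shrinking sequence of open sets $U_k$ each meeting $\partial M_a$, each avoiding $c = a - a^d$, and each avoiding the finitely many points of ${\rm Preper}(a)$ already produced; applying the argument in each $U_k$ yields a new preperiodic parameter each time. Alternatively, and more cleanly: the argument above in fact shows $\partial M_a \subseteq \overline{{\rm Preper}(a)}$, and since $\partial M_a$ is an infinite (indeed uncountable) set, ${\rm Preper}(a)$ cannot be finite, for a finite set is closed and would then contain the infinite set $\partial M_a$, a contradiction.

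The main obstacle is the bookkeeping needed to guarantee distinctness — one must ensure the open set(s) $U$ on which Montel is applied avoid the single ``bad'' parameter $c = a - a^d$ where $h_1$ and $h_2$ coincide, and (for the iterative version) avoid previously-found preperiodic parameters; this is harmless since we are only removing a finite set from a set with no isolated points. Everything else is already in place from Proposition~\ref{M} and the proof of Theorem~\ref{Recovery}, so I expect the proof to be quite short, essentially the single sentence ``$\partial M_a \subseteq \overline{{\rm Preper}(a)}$ and $\partial M_a$ is infinite.''
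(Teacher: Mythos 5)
Your proof is correct and follows essentially the same route as the paper: both reduce the lemma to the inclusion $\partial M_a \subseteq \overline{{\rm Preper}(a)}$ (via the Montel argument from Theorem~\ref{Recovery}) together with the fact that $\gamma(M_a)=1>0$ forces $\partial M_a$ to be infinite. Your extra iterative bookkeeping is unnecessary, as your own ``cleaner'' final sentence is exactly the paper's proof.
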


\proof
From Proposition \ref{M}, the set $M_a$ has capacity 1, so its boundary cannot be a finite set.  From the proof of Theorem \ref{Recovery},  the boundary $\partial M_a$ is contained in the closure of ${\rm Preper}(a)$, so the set ${\rm Preper}(a)$ must contain infinitely many points.
\qed

\medskip
Note that if $a \in \Qbar$, then the set ${\rm Preper}(a)$ is a subset of $\Qbar$ with
bounded Weil height (since $h_{\MM_a}(c) = 0$ for all $c \in {\rm Preper}(a)$ and the difference 
$h - h_{\MM_a}$ is bounded).  It is thus a rather ``sparse'' set (compare with the discussion following
Theorem~\ref{MZTheorem} above).

\subsection{The non-archimedean case}
If $K$ is a non-archimedean field, one can define $M_a \subset \AA^1_{\Berk,K}$ similarly and prove basic potential-theoretic statements about $M_a$.  

Let $g_n(T) = f^{(n)}_T(a)$; this is a monic polynomial in $T$ of degree $d^{n-1}$ which depends on $a \in K$.
Define
\begin{equation}
\label{BerkovichMdef}
M_{a} := \left\{ c \in \AA^1_{\Berk,K} \; : \; \sup_n \, |g_n(c)| < \infty \right\},
\end{equation}
where $[ \cdot ]_c$ is the multiplicative seminorm on $K[T]$
corresponding to $c \in \AA^1_{\Berk,K}$
and $|g_n(c)|$ is shorthand for $[g_n(T)]_c$.
(Note that for $c \in K$, we have $[g_n(T)]_c = |g_n(c)| = |f^{(n)}_c(a)|$.)

\begin{proposition}
\label{capgreenprop}
For each $a\in K$, 
\begin{enumerate}
\item the boundary of $M_a$ coincides with the outer boundary in  $\AA^1_{\Berk,K}$, and it is equal to the support of $\mu_{M_a}$;
\item the logarithmic capacity $\gamma(M_a)$ is equal to $1$; and
\item the Green's function for $M_a$ relative to $\infty$ is 0 at all points of $M_a$.
\end{enumerate}
\end{proposition}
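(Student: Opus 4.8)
The plan is to run the argument of Proposition~\ref{M} again, this time using the non-archimedean potential theory summarized above (Lemma~\ref{BerkGreenLemma}) in place of the B\"ottcher-coordinate and univalence estimates, which have no $p$-adic counterpart. The central object is the \emph{escape-rate function}
\[
G_a(c) \ := \ \lim_{n\to\infty} \frac{1}{d^{\,n-1}} \log^+ |g_n(c)| ,\qquad c \in \AA^1_{\Berk,K},
\]
where $|g_n(c)|$ abbreviates $[g_n(T)]_c$. I would prove that this limit exists, that $G_a$ is the Green's function $G_{M_a}$ of $M_a$ relative to $\infty$, and then read off the three assertions: part (3) is the equality $G_{M_a}=G_a\equiv 0$ on $M_a$; part (2) follows by comparing the behavior of $G_a$ near $\infty$ with Lemma~\ref{BerkGreenLemma}(1); and part (1) is Lemma~\ref{BerkGreenLemma}(3) together with the fullness of $M_a$.

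The first block of steps is an \emph{ultrametric dichotomy} for the parameter orbit, replacing Lemma~\ref{Glemma}. Because $[\cdot]_c$ is non-archimedean and $g_{n+1}=g_n^{\,d}+T$, we have $|g_{n+1}(c)|\le\max\{|g_n(c)|^{d},|c|\}$ with equality when the terms differ; hence if $|g_m(c)|>\max\{1,|c|^{1/d}\}$ for one $m$, then $|g_n(c)|=|g_m(c)|^{d^{\,n-m}}\to\infty$. So for each $c$ the orbit either remains in the ball of radius $R:=\max\{1,|a|\}$ or escapes monotonically, whence $M_a=\{c:|g_n(c)|\le R\ \forall n\}$ is closed and bounded (hence compact), $M_a=\{c:G_a(c)=0\}$, and $G_a(c)>0$ off $M_a$. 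A telescoping estimate $\big|\,d^{-n}\log^+|g_{n+1}(c)|-d^{-(n-1)}\log^+|g_n(c)|\,\big|\le d^{-n}\log^+|c|$ shows the limit converges uniformly on compact subsets of $\AA^1_{\Berk,K}$; since $d^{-(n-1)}\log^+|g_n(\cdot)|=d^{-(n-1)}\max\{\sum_i\log\delta(\cdot,\theta_i),\,0\}$ (with $\theta_i\in K$ the zeros of $g_n$) is continuous and subharmonic, $G_a$ is continuous and subharmonic on $\AA^1_{\Berk,K}$. Near any $c_0\notin M_a$ the dichotomy gives $G_a=d^{-(N-1)}\log|g_N(\cdot)|$ on a neighborhood on which $g_N$ has no zeros, so $G_a$ is harmonic on $U:=\AA^1_{\Berk,K}\setminus M_a$; and for $|c|$ large, $|g_n(c)|=|c|^{d^{\,n-1}}$ gives $G_a(c)=\log|c|$, so $G_a-\log^+|c|$ is bounded. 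Finally, if $W$ were a bounded component of $\AA^1_{\Berk,K}\setminus M_a$, then $G_a$ would be continuous, $\ge 0$, subharmonic near $\overline W$, and $0$ on $\partial W\subseteq M_a$; the maximum principle would force $G_a\equiv 0$ on $W$, i.e. $W\subseteq M_a$, a contradiction. Thus $M_a$ is full and $U$ is its complement.

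To apply Lemma~\ref{BerkGreenLemma} I still need $\gamma(M_a)>0$, which is the one substantive point; I would obtain it by an energy computation. The measure $\nu:=\Delta_{\AA^1_{\Berk}}(G_a)$ is a probability measure (because $G_a-\log^+|c|$ is bounded) supported in $M_a$ (because $G_a$ is harmonic on $U$); comparing Laplacians and the behavior at $\infty$ identifies $G_a$ with the potential $c\mapsto\int\log\delta(c,w)\,d\nu(w)$ of $\nu$, so its energy equals $-\int G_a\,d\nu=-\int_{M_a}G_a\,d\nu=0$. Hence $V(M_a)\le 0$, and in particular $\gamma(M_a)>0$. Now Lemma~\ref{BerkGreenLemma}(2), with $E=M_a$ and $G=G_a$, gives $G_a=G_{M_a}$; Lemma~\ref{BerkGreenLemma}(1) together with $G_a(c)=\log|c|$ near $\infty$ forces $V(M_a)=0$, i.e. $\gamma(M_a)=1$, which is (2); and since $G_{M_a}=G_a\equiv 0$ on $M_a$, Lemma~\ref{BerkGreenLemma}(3) gives $\supp\mu_{M_a}=\partial U$, which by fullness equals $\partial M_a$ and the outer boundary of $M_a$ in $\AA^1_{\Berk,K}$, giving (1); statement (3) is the vanishing $G_{M_a}|_{M_a}\equiv 0$ we already have.

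I expect the main obstacle to be exactly the circularity just noted: Lemma~\ref{BerkGreenLemma} presumes positive capacity, yet in the absence of a clean description of $M_a$ (unlike the unit-disc case $|a|\le 1$, the set can be quite thin when $|a|>1$) there is no a priori capacity bound. The energy argument dissolves this, but only after the preliminary analysis of $G_a$ — uniform convergence, subharmonicity, harmonicity off $M_a$, and the identification of $G_a$ with the potential of its Laplacian — has been carried out carefully with the machinery of \cite{BRBook}. No single step is deep, but collectively they are where essentially all of the work lies; the remainder is a formal unwinding of Lemma~\ref{BerkGreenLemma}, exactly parallel to the complex case.
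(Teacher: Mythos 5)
Your proposal is correct and follows essentially the same route as the paper: define the escape-rate function $G_a$ from the monic polynomials $g_n$, prove locally uniform convergence by a telescoping estimate, establish continuity, subharmonicity, harmonicity off $M_a$ and the exact behavior $G_a(c)=\log|c|$ near $\infty$, and then read off all three assertions from Lemma~\ref{BerkGreenLemma}. The only real divergence is that you explicitly verify the positive-capacity hypothesis of Lemma~\ref{BerkGreenLemma}(2) via the Riesz/energy argument (and fullness via the maximum principle rather than via part (3) of that lemma), a point the paper's proof leaves implicit; this is a sound, if slightly more laborious, patch rather than a different method.
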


\proof
Fix $a \in K$, and for $c \in \AA^1_{\Berk}$ define
\begin{equation}
\label{nonarchGadef}
G_a(c) := \lim_{n \to \infty} \frac{1}{d^n} \log^+ |g_{n+1}(c)|.
\end{equation}
Note that for $c \in K$, we have
\[
G_a(c) = \lim_{n \to \infty} \frac{1}{d^n} \log^+ |f_c^{(n+1)}(a)|
= \lim_{n \to \infty} \frac{1}{d^n} \log^+ |f_c^{(n)}(a^d+c)|,
\]
which is the same formula we used to define $G_a$ over $\CC$.

\medskip

The limit in (\ref{nonarchGadef}) exists for all $c \in \AA^1_{\Berk}$:
if $c \in M_a$, then the limit is zero, while if $c \not\in M_a$,
then the sequence
$\frac{1}{d^n} \log^+ |g_{(n+1)}(c)|$
is eventually constant.
Indeed, if $c \not\in M_a$ then the sequence
$|g_n(c)|$ is unbounded  so there exists $N = N(c)$ such that 
$|g_n(c)| > \max \{ 1, |c|^{\frac{1}{d}} \}$ for $n \geq N$.
By the ultrametric inequality, for $c \in K$ and $n \geq N$ we have
\[
|g_{n+1}(c)| = |g_n(c)^d + c| = |g_n(c)|^d > 1,
\]
and since $K$ is dense in $\AA^1_{\Berk}$ the equality
$|g_{n+1}(c)| = |g_n(c)|^d$ holds more generally for all $c \in \AA^1_{\Berk} \backslash M_a$.  Thus the sequence $\frac{1}{d^n} \log^+ |g_{(n+1)}(c)|$ is
constant for $n \geq N$.
\medskip

{\bf Claim 1:} $\frac{1}{d^n} \log^+ |g_{n+1}(c)|$ converges to $G_a(c)$ uniformly on compact subsets of $\AA^1_{\Berk}$
(as functions of $c$).

\medskip

To see this, one can employ essentially the same argument as in the archimedean case
(\cite[Proposition 1.2]{Branner:Hubbard:1}; compare with \cite[\S{10.1}]{BRBook}). Briefly, fix a compact set $E \subset \AA^1_{\Berk}$.
Then there is a constant $C>0$, depending only on $E$, such that 
$[T + z^d]_c = |z|^d$ for $c \in E$ and $|z| \geq C$.
Thus there are constants $C_1,C_2 > 0$ (depending only on $E$) 
such that for $z \in K$ and $c \in E$, 
\[
C_1 \max (1,|z|^d) \leq \max(1,[T+z^d]_c) \leq C_2 \max (1,|z|^d).
\]
Taking logarithms, iterating, and multiplying by $d$ shows that there is a constant $C' > 0$ (depending only on $E$) such that
for each fixed $a \in K$,
\[
\left| \frac{1}{d^{n}} \log^+|g_{n+1}(c)| -  \frac{1}{d^{n-1}} \log^+|g_{n}(c)| \right| \leq \frac{C'}{d^n}.
\]
A telescoping series argument now gives the desired uniform convergence on $E$:
\[
\begin{aligned}
\left| G_a(c) - \frac{1}{d^{n-1}} \log^+ |g_n(c)| \right|
&= \left| \sum_{m \geq n} \left( 
\frac{1}{d^m} \log^+ |g_{m+1}(c)| - \frac{1}{d^{m-1}} \log^+ |g_{m}(c)| 
\right) \right| \\
&\leq  \sum_{m \geq n} \frac{C'}{d^m} = \frac{C'}{d^n - d^{n-1}}, \\
\end{aligned}
\]
proving Claim 1.

\medskip

{\bf Claim 2:} $G_a$ is the Green's function for $M_a$ relative to $\infty$.

\medskip

Indeed, $G_a$ is harmonic on $U_a := \AA^1_{\Berk} \backslash M_a$ by 
\cite[Example 7.5]{BRBook} and \cite[Proposition 7.31]{BRBook},
since on $U_a$ the function $G_a$ is the limit of the harmonic
functions $\frac{1}{d^n} \log |g_{n+1}(c)|$.
Moreover, since the sequence of continuous subharmonic functions $\frac{1}{d^n} \log^+ |g_{n+1}(c)|$ converges
uniformly to $G_a$ on compact subsets of $\AA^1_{\Berk}$,
it follows from \cite[Proposition 8.26(C)]{BRBook} that 
$G_a$ is continuous and subharmonic on $\AA^1_{\Berk}$.
In addition, $G_a$ is zero on $M_a$, and for $|c| > \max\{ 1,|a|^d \}$ we have
$G_a(c) = \log^+|c|$.
Claim 2 therefore follows from part (2) of Lemma~\ref{BerkGreenLemma}.

Assertion (3) is now immediate, and assertions (1) and (2) follow from parts (3) and (1) of
Lemma~\ref{BerkGreenLemma}, respectively.
\qed

\medskip
\subsection{Global generalized Mandelbrot sets}
\label{section:GGMS}

Let $k$ be a product formula field, and fix $a \in k$.  For each $v \in \cM_k$, define $M_{a,v} \subseteq \AA^1_{\Berk,\CC_v}$ following the local recipes above. Recall that $\AA^1_{\Berk,\CC_v} = \CC$ if $v$ is archimedean.  Define a compact Berkovich adelic set $\MM_a$ by 
	$$\MM_a := \{ M_{a,v} \},$$ 
observing that $M_{a,v} = \cD(0,1)$ whenever $|a|_v \leq 1$.  Propositions \ref{M} (4) and \ref{capgreenprop} (2) imply that the global capacity $\gamma(\MM_a)$ is equal to $1$.
Moreover, for each $v \in \cM_k$ the local Green's function 
$G_{M_{a,v}} : \AA^1_{\Berk,v} \to \RR_{\geq 0}$ 
is continuous, with $G_{M_{a,v}}(z) = 0$ if and only if  $z \in M_{a,v}$.

If $S \subset \kbar$ is any finite set invariant under $\Gal(\kbar / k)$,
then following (\ref{HeightOfSDef2}) the height of $S$ relative to $\MM_a$ is given by 
\begin{equation} \label{HeightOfSDef3}
h_{\MM_a}(S) \ = \ \sum_{v \in \cM_k} N_v \left( 
\frac{1}{|S|} \sum_{z \in S} G_{M_{a,v}}(z) \right) \ .
\end{equation}

\begin{remark}
The adelic height function attached to the usual 
Mandelbrot set appeared previously in \cite{BakerHsia} and
\cite{FRL2}.  
\end{remark}

\medskip
\subsection{The function field setting}  \label{function field}
For later use, we recall a result of Benedetto and note its relevant consequences.
Let $k$ be an {\em abstract function field}, by which we mean a product formula field for which all $v \in \cM_k$ are non-archimedean.
A polynomial $\varphi \in k[T]$ is called {\em trivial over $k$} if it is
conjugate (by an invertible linear map $T \mapsto \alpha T+ \beta$ defined over $k$) 
to a polynomial defined over the constant field of $k$. 

\begin{theorem}
\label{BenedettoTheorem} \cite{BenedettoFFHeights}
Let $k$ be an abstract function field.
If $\varphi \in k[T]$ is not trivial over $k$, then 
$a \in \kbar$ is preperiodic for $\varphi$ 
if and only if $a$ belongs to the $v$-adic filled Julia set of $\varphi$ 
(i.e., $a$ stays $v$-adically bounded under iteration of $\varphi$)
for all $v \in \cM_k$.
\end{theorem}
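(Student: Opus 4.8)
Theorem~\ref{BenedettoTheorem}: if $k$ is an abstract function field and $\varphi \in k[T]$ is not trivial over $k$, then $a \in \kbar$ is preperiodic for $\varphi$ if and only if $a$ lies in the $v$-adic filled Julia set of $\varphi$ for every $v \in \cM_k$.

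\textbf{Proof proposal.}
The forward implication is immediate: if $a$ is preperiodic for $\varphi$, its forward orbit is a finite subset of $\kbar$, hence $v$-adically bounded for every $v\in\cM_k$, so $a$ lies in the $v$-adic filled Julia set of $\varphi$ for all $v$. The content is the converse, and the plan is to reformulate it through canonical heights. Let $d=\deg\varphi$. First I would build the canonical height $\hhat_\varphi\colon\PP^1(\kbar)\to\RR_{\ge 0}$ as the Tate limit $\hhat_\varphi(z)=\lim_{n\to\infty}d^{-n}h(\varphi^{(n)}(z))$, where $h$ is the standard Weil height attached to the product formula structure on $k$; a telescoping estimate gives existence of the limit, the functoriality $\hhat_\varphi\circ\varphi=d\,\hhat_\varphi$, and boundedness of $\hhat_\varphi-h$. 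Carrying out the same limit place-by-place decomposes it as $\hhat_\varphi(z)=\sum_{v\in\cM_k}N_v\,\hat\lambda_v(z)$ with local canonical heights $\hat\lambda_v(z)=\lim_n d^{-n}\log^+|\varphi^{(n)}(z)|_v\ge 0$; these are exactly the local escape-rate (Green's) functions of the $v$-adic filled Julia sets, so $\hat\lambda_v(z)=0$ precisely when $z$ belongs to the $v$-adic filled Julia set (a computation parallel to the ones in \S\ref{mandelbrot section}). Since all terms are nonnegative, $a$ lies in every $v$-adic filled Julia set if and only if $\hhat_\varphi(a)=0$, and the theorem becomes the Northcott-type statement: \emph{if $\varphi$ is not trivial over $k$, then $\hhat_\varphi(a)=0$ implies $a$ is preperiodic.}

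To attack this I would start from the observation that, because $\hhat_\varphi(\varphi^{(n)}(a))=d^n\hhat_\varphi(a)=0$ and $h-\hhat_\varphi$ is bounded, the whole forward orbit $\{\varphi^{(n)}(a)\}_{n\ge 0}$ lies in the finite extension $k(a)$ and has uniformly bounded Weil height. Over a function field this does not yet give finiteness — the constant field already supplies infinitely many height-zero points — so the hypothesis that $\varphi$ is not trivial over $k$ must be brought to bear. After conjugating $\varphi$ over a finite extension to monic, centered form, non-triviality means that some coefficient fails to lie in the constant field, i.e.\ $|c_i|_{v_0}>1$ for some place $v_0$. At $v_0$ the polynomial has bad reduction: its filled Julia set is a proper subset of the relevant closed disc in $\AA^1_{\Berk,v_0}$, and the dynamics is expanding there, whereas at the (cofinitely many) places of good reduction $\hat\lambda_v$ agrees with $\log^+|\cdot|_v$. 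The crux is to turn this into a quantitative \emph{gap principle}: either a uniform lower bound for $\hat\lambda_{v_0}$ on non-preperiodic points of bounded height, or, equivalently, the assertion that for each finite extension $L/k$ only finitely many $z\in\PP^1(L)$ can satisfy $\hat\lambda_v(z)=0$ simultaneously at every $v\in\cM_k$. I expect \emph{this finiteness is the main obstacle}; it is precisely the theorem of Benedetto cited as \cite{BenedettoFFHeights}, whose proof analyzes the non-archimedean dynamics at the bad place(s) directly (an alternative, less self-contained route is a specialization argument, descending to the algebraically closed constant field and invoking Northcott there).

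Granting the Northcott property, the theorem follows: if $a$ lies in every $v$-adic filled Julia set, then $\hhat_\varphi(a)=\sum_v N_v\,\hat\lambda_v(a)=0$, whence $a$ is preperiodic because $\varphi$ is not trivial over $k$; the reverse direction was already noted.
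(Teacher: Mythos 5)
The paper offers no proof of this statement: it is imported verbatim from Benedetto's work \cite{BenedettoFFHeights}, and the only argument in the paper surrounding it is the deduction of Corollary~\ref{BenedettoCor}. So there is nothing internal to compare your attempt against; the relevant question is whether your proposal is a proof, and it is not. Your forward direction is fine, and your reformulation is correct and standard: decomposing $\hhat_\varphi=\sum_{v}N_v\hat\lambda_v$ with $\hat\lambda_v(z)=\lim_n d^{-n}\log^+|\varphi^{(n)}(z)|_v$, and noting $\hat\lambda_v(z)=0$ exactly when $z$ lies in the $v$-adic filled Julia set, shows the theorem is equivalent to the Northcott-type assertion that a nontrivial $\varphi\in k[T]$ has $\hhat_\varphi(a)=0$ only at preperiodic points (this is precisely how the paper itself uses the result, via $h_{\MM_a}(c)=0$, in Corollary~\ref{BenedettoCor}). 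But that assertion \emph{is} the theorem. Your sketch of how to prove it --- pass to monic centered form, find a place $v_0$ of bad reduction, and establish a quantitative gap/lower bound for $\hat\lambda_{v_0}$ on non-preperiodic points --- correctly identifies the shape of Benedetto's argument, yet you explicitly defer the execution to the cited paper. As a self-contained proof the proposal is therefore circular at its crux: everything before the hard step is bookkeeping, and the hard step is conceded.

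Two smaller cautions if you were to carry the plan out. First, ``trivial over $k$'' in this paper means conjugate by an affine map \emph{defined over $k$} to a polynomial over the constant field; your normalization step conjugates over a finite extension, so you would need to check that nontriviality (and the product formula structure, with its places and weights) behaves correctly under base change to $k(a)$ or to the extension where $\varphi$ becomes monic and centered --- this is harmless but not free. Second, uniform boundedness of the Weil height of the orbit does not by itself localize the problem at a single bad place; Benedetto's proof genuinely needs the dynamical analysis at bad-reduction places (or, as you note, a specialization/isotriviality argument in the style of Theorem~\ref{BakerIsotrivialityTheorem}), and without one of these the constant-field counterexamples you mention show the conclusion simply fails for trivial $\varphi$, so the nontriviality hypothesis must enter through exactly the step you left unproved.
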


\begin{remark}
If $k$ is a number field, then it is well known and follows easy from Northcott's theorem
that $a \in \kbar$ is preperiodic for $\varphi$ if and only if $a$ belongs to the $v$-adic filled Julia set of $\varphi$ 
for all $v \in \cM_k$.
But if $k$ is an abstract function field and $\varphi \in k[T]$ is trivial over $k$, then it
is easy to see that the conclusion of Theorem~\ref{BenedettoTheorem} fails, 
since every element of the constant field $k_0$ of $k$ stays
$v$-adically bounded for all $v$ but not every element of $k_0$ is preperiodic.
\end{remark}

\begin{cor}
\label{BenedettoCor}
Let $k$ be an abstract function field of characteristic zero such that every field $k_v$ for $v \in \cM_k$ also has residue characteristic zero,
and fix $a,c \in k$ with $c$ not in the constant field $k_0$ of $k$.  
Then the following are equivalent:
\begin{enumerate}
\item $a$ is preperiodic for the iteration of $f_c(z) = z^d + c$. 
\item $c$ is contained in $M_{a,v}$ for all $v \in \cM_k$.
\item $h_{\MM_a}(c) = 0$.
\end{enumerate}
\end{cor}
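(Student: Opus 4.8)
The plan is to prove Corollary~\ref{BenedettoCor} by combining Benedetto's theorem (Theorem~\ref{BenedettoTheorem}) with the potential-theoretic description of $M_{a,v}$ and $h_{\MM_a}$ developed in the previous sections. The key observation is that the polynomial $f_c(z) = z^d + c$, viewed as an element of $k[T]$ (with $c \in k$), is \emph{not trivial over $k$} precisely when $c \notin k_0$; this is what lets us apply Theorem~\ref{BenedettoTheorem}. So the first step is to verify this triviality claim: if $f_c$ were conjugate over $k$ to a polynomial with coefficients in $k_0$, then a direct computation with the conjugating linear map $T \mapsto \alpha T + \beta$ shows (comparing leading and constant-ish coefficients, using $d \geq 2$) that $\alpha$ is a root of unity times something in $k_0$ and that $c$ is forced to lie in $k_0$; conversely if $c \in k_0$ then $f_c$ is already defined over $k_0$, hence trivial. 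The residue-characteristic-zero hypotheses ensure there are no pathologies (e.g.\ wild ramification or the failure of the naive degree count) in applying Benedetto's result and in identifying preperiodicity with $v$-adic boundedness.

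Granting the triviality dichotomy, the equivalence $(1) \Leftrightarrow (2)$ is immediate from Theorem~\ref{BenedettoTheorem}: since $c \notin k_0$, the map $f_c$ is not trivial over $k$, so $a \in \kbar$ is preperiodic for $f_c$ if and only if $a$ lies in the $v$-adic filled Julia set of $f_c$ for every $v \in \cM_k$. But by the definition of $M_{a,v}$ in \eqref{BerkovichMdef} (and \eqref{ComplexMdef} in the archimedean case, though here all $v$ are non-archimedean), the point $a$ stays $v$-adically bounded under iteration of $f_c$ if and only if $\sup_n |g_n(c)|_v < \infty$, i.e.\ if and only if $c \in M_{a,v}$. (Here one uses that for $c \in K$ the seminorm evaluation $[g_n(T)]_c$ agrees with $|g_n(c)|_v = |f_c^{(n)}(a)|_v$, as noted in the text just after \eqref{BerkovichMdef}.) So condition (2) is literally a restatement of ``$a$ lies in every $v$-adic filled Julia set of $f_c$,'' giving $(1) \Leftrightarrow (2)$.

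For $(2) \Leftrightarrow (3)$, I would use the formula \eqref{HeightOfSDef3} for $h_{\MM_a}$ applied to the $\Gal(\kbar/k)$-orbit $S = S_k(c)$ of $c$ (note $c \in k$, so $S_k(c) = \{c\}$ and the average is trivial), together with Proposition~\ref{capgreenprop}: the local Green's function $G_{M_{a,v}}$ is nonnegative, continuous, and vanishes exactly on $M_{a,v}$. Hence $h_{\MM_a}(c) = \sum_{v} N_v\, G_{M_{a,v}}(c)$ is a sum of nonnegative terms, so it equals zero if and only if every term vanishes, if and only if $c \in M_{a,v}$ for all $v$ — which is condition (2). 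This closes the cycle of equivalences. The main obstacle is really just the first step: carefully checking that $c \notin k_0$ forces $f_c$ to be non-trivial over $k$, so that Benedetto's theorem applies; everything after that is an unwinding of definitions. One should also remark why $c \in k$ rather than merely $c \in \kbar$ matters here — it is used both to make $f_c \in k[T]$ (so Theorem~\ref{BenedettoTheorem} is applicable over $k$) and to make the orbit $S_k(c)$ a single point, though a routine modification handles $c \in \kbar$ by passing to $k(c)$.
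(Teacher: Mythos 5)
Your route is the same as the paper's: equivalences (1)$\Leftrightarrow$(2) come from Theorem~\ref{BenedettoTheorem} once one knows $z^d+c$ is not trivial over $k$ when $c\notin k_0$, and (2)$\Leftrightarrow$(3) is just nonnegativity of the local Green's functions together with the fact (Proposition~\ref{capgreenprop} and the discussion at the start of \S\ref{section:GGMS}) that $G_{M_{a,v}}$ vanishes exactly on $M_{a,v}$, applied to the one-point orbit $S_k(c)=\{c\}$. Those two parts of your write-up are correct and match the paper.

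The thin spot is precisely the non-triviality claim, which is where all the hypotheses of the corollary actually do their work, and your sketch of it would not go through as stated. Comparing only the leading and ``constant-ish'' coefficients is not enough: if $\alpha z+\beta$ conjugates $z^d+c$ into $k_0[z]$, the leading coefficient $\alpha^{d-1}\in k_0^\times$ gives $|\alpha|_v=1$ for every $v$, but the constant term $(c+\beta^d-\beta)/\alpha$ mixes $c$ with $\beta$, so without a bound on $|\beta|_v$ you can conclude nothing about $|c|_v$. The paper gets that bound from the coefficient of $z$, namely $d\beta^{d-1}\in k_0$ in (\ref{eq:BenedettoCalc}), and this is exactly where the residue-characteristic-zero hypothesis enters: it guarantees $|d|_v=1$, hence $|\beta|_v^{d-1}=|d\beta^{d-1}|_v\le 1$, so $|\beta|_v\le 1$; then at a place with $|c|_v>1$ the ultrametric inequality gives $|c+\beta^d-\beta|_v>1$, contradicting membership in $k_0$. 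Your remark that the residue-characteristic hypotheses are there to avoid ``pathologies in applying Benedetto's result'' misplaces them: Theorem~\ref{BenedettoTheorem} carries no characteristic assumptions at all; the hypotheses are needed only for this coefficient estimate, and the claim genuinely fails without them (e.g.\ in characteristic $p$ with $d=p$, taking $k=k_0(t)$ and $c=t^p-t\notin k_0$, the translation $z\mapsto z-t$ conjugates $z^p+c$ to $z^p$, which is defined over the constant field). Finally, ``$\alpha$ is a root of unity times something in $k_0$'' is not the statement you need; what the computation yields, and what is used, is $|\alpha|_v=1$ at every place (so in fact $\alpha\in k_0$). With the linear coefficient added to your comparison and the hypotheses relocated to that step, your argument becomes the paper's proof.
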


\begin{proof}
By the discussion at the beginning of \S\ref{section:GGMS},
we have $h_{\MM_a}(c) = 0$ if and only if $c$ is contained in $M_{a,v}$ for all $v \in \cM_k$.
We claim that $z^d+c$ is not trivial over $k$.
Assuming the claim, Benedetto's theorem implies that
$a$ is preperiodic for $f_c$ if and only if $a$ belongs
to the $v$-adic filled Julia set of $f_c$ for all $v \in \cM_k$.
The desired result follows, since by definition, $a$ belongs to
the $v$-adic filled Julia set of $f_c$ if and only if $c \in M_{a,v}$.

To prove the claim,
suppose for the sake of contradiction that $c \not\in k_0$ and 
$\alpha z+ \beta$ conjugates $z^d + c$ into a polynomial defined over $k_0$.
Then that conjugate is
\begin{equation}
\label{eq:BenedettoCalc}
\frac{1}{\alpha} (\alpha z+\beta)^d + \frac{1}{\alpha}(c-\beta) = \alpha^{d-1}z^d + \cdots + d\beta^{d-1}z
+ \frac{c + \beta^d - \beta}{\alpha} \in k_0[z].
\end{equation}
Since $c \not\in k_0$, there exists $v \in \cM_k$ such that $|c|_v > 1$.
By (\ref{eq:BenedettoCalc}) and our assumptions on $k$, we have 
$|\alpha|_v = 1$ and
$|d\beta^{d-1}|_v = |\beta|_v^{d-1} \leq 1$, hence $|\beta|_v \leq 1$.
Thus $|c + \beta^d - \beta|_v > 1$ by the ultrametric inequality, contradicting
(\ref{eq:BenedettoCalc}).
\end{proof}

\bigskip
\section{Proof of Theorem~\ref{MainTheorem}}

We our now ready to give the proof of our main theorem.  We also extract a stronger statement (Theorem \ref{MainTheoremBis}) in the case where the points $a$ and $b$ are algebraic.  

\subsection{The main theorem}
Fix two points $a$ and $b$ in the complex plane and a degree $d\geq 2$.  We aim to prove that the set of parameters $c\in\CC$ for which both $a$ and $b$ are preperiodic for $f_c(z) = z^d+c$ is infinite if and only if $a^d = b^d$.

\begin{proof}[Proof of Theorem~\ref{MainTheorem}]
First suppose that $a^d = b^d$.  Then $a$ is preperiodic for $f_c$ if and only if $b$ is preperiodic for $f_c$.  From Lemma \ref{MontelLemma}, the set of parameters $c$ for which $a$ is preperiodic is infinite.  

Now fix $a$ and $b$ in $\CC$, and assume that there is an infinite sequence $c_1,c_2,\ldots$ of distinct complex numbers such that $a$ and $b$ are both preperiodic for $z^d + c_n$ for all $n$.

\smallskip 

{\bf Case 1:} $a,b \in \Qbar$.

\smallskip

In this case, $c_n$ must be algebraic for all $n$.
Indeed, let $g_m(c) = f^{(m)}_c(a)$; this is a monic polynomial in $c$ of degree $d^{m-1}$ with coefficients in
the number field $k := \QQ(a) \subset \Qbar$.  Since $a$ is preperiodic for $f_{c_n}(z)$ ($n=1,2,\ldots$), 
there exist integers $\ell > m \geq 1$ (depending on $n$) such that $g_\ell(c_n) = g_m(c_n)$.
Thus $c_n$ is a root of the nonzero polynomial $g_\ell(z) - g_m(z) \in \Qbar[z]$, and hence $c_n \in \kbar$ for all $n$.  Further, we see that $a$ is also preperiodic for all $\Gal(\kbar/k)$-conjugates of $c_n$, and we deduce that $h_{\MM_a}(c_n)=0$ for all $n$.

Let $\delta_n$ be the discrete probability measure on $\CC$ supported equally on the $\Gal(\kbar/k)$-conjugates of $c_n$.  By Corollary~\ref{BREQUIcor2} the measures $\delta_n$ converge weakly to the probability measure $\mu_{M_a}$ (the equilibrium measure relative to $\infty$ for the set $M_a$) on $\PP^1(\CC)$.  By symmetry, the measures $\delta_n$ also converge weakly to $\mu_{M_b}$.  Thus $\mu_{M_a}=\mu_{M_b}$.  By Theorem~\ref{Recovery}, we conclude that $a^d = b^d$.

\smallskip

{\bf Case 2:} $a$ is transcendental.

\smallskip

In this case, $b$ is also transcendental, as otherwise each $c_n$ would be algebraic, contradicting the transcendence of $a$.  In fact, the values $a$, $b$, and $c_n$ for all $n$ are defined over the algebraic closure $\kbar$ of $k = \Qbar(a)$ in $\CC$.  
Indeed, for each $n$ there exist integers $\ell > m \geq 1$ such that 
$c_n$ is a root of the nonzero polynomial $g_\ell(z) - g_m(z) \in k[z]$,
and hence $c_n \in \kbar$ for all $n$.  
Moreover (setting $c = c_n$ for any $n$), there exist $\ell > m \geq 1$ such that $b$ is a 
root of the nonzero polynomial
$f_c^{(\ell)}(z) - f_c^{(m)}(z) \in \kbar[z]$, and hence $b \in \kbar$ as well.

Since $a$ is transcendental, the field $k= \Qbar(a)$ is isomorphic to the field 
$\Qbar(T)$ of rational functions over $\Qbar$, 
and in particular $k$ can be viewed as a product formula field with $\Qbar$ as its field of constants (cf. Remark~\ref{fgfieldremark}).
Since $a$ is preperiodic for $f_{c_n}(z)$, we have $h_{\MM_a}(c_n) = 0$ for all $n$.
Fix a place $v \in \cM_k$, let $\CC_v$ be the completion of an algebraic closure of the $v$-adic completion $k_v$,
and identify $\kbar$ with a subfield of $\CC_v$.
Let $T_m$ be the set of $\Gal(\kbar/k)$-conjugates of $c_m \in \kbar$, and define 
\[
S_n = \bigcup_{m=1}^n T_m .
\]
Then $S_n$ is a $\Gal(\kbar/k)$-stable subset of $\kbar$, $h_{\MM_a}(c)=0$ for every $c \in S_n$,
and $|S_n| \to \infty$ as $n \to \infty$.
Let $\delta_n$ be the discrete probability measure on the Berkovich projective line $\PP^1_{\Berk,v}$ over $\CC_v$ 
supported equally on the elements of $S_n$.
Let $M_{a,v} \subset \PP^1_{\Berk,v}$ be the $v$-adic generalized Mandelbrot set corresponding to $a$ 
(cf. (\ref{BerkovichMdef})).
By Theorem~\ref{AdelicHeightEquiTheorem},
the sequence $\delta_n$ converges weakly on $\PP^1_{\Berk,v}$ to the equilibrium measure $\mu_{M_{a,v}}$
for $M_{a,v}$ relative to $\infty$.
Moreover, by Proposition~\ref{capgreenprop}, the support of $\mu_{M_{a,v}}$ is equal to $\partial M_{a,v}$.

Applying the same reasoning to $b$, it follows by symmetry that $M_{a,v} = M_{b,v}$ for all places $v$ of $k$.
Hence, by Corollary~\ref{BenedettoCor}, for each fixed $c \in \kbar$, $a$ is preperiodic for $z^d+c$ if and only if $b$ is preperiodic.
Recall from the discussion at the beginning of Case 2 that if 
$c \in \CC$ and $a$ is preperiodic for $z^d + c$, then $c \in \kbar$.
It follows that for every complex number $c$, $a$ is preperiodic for $z^d + c$ if and only if $b$ is.  Theorem \ref{Recovery} then implies that $a^d = b^d$, completing the proof of the theorem.
\end{proof}

\subsection{Height bounds}
In the case where $a,b \in \Qbar$, the proof of Theorem~\ref{MainTheorem} yields the following stronger result:

\begin{theorem}
\label{MainTheoremBis}
Let $a,b \in \Qbar$ with $a^d \neq b^d$.
Then there is a real number $\varepsilon > 0$ such that 
$h_{\MM_a}(c) + h_{\MM_b}(c) \geq \varepsilon$ for all
but finitely many $c \in \Qbar$.
\end{theorem}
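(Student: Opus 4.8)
The plan is to argue by contradiction and extract a sequence of counterexamples along which the combined adelic Mandelbrot height tends to zero, then run the equidistribution argument from the proof of Theorem~\ref{MainTheorem} to conclude $a^d = b^d$. So suppose the conclusion fails: there is an infinite sequence of distinct points $c_1, c_2, \ldots \in \Qbar$ with $h_{\MM_a}(c_n) + h_{\MM_b}(c_n) \to 0$ (after passing to a subsequence we may assume convergence to $0$, since the heights are nonnegative). Since $h_{\MM_a}$ and $h_{\MM_b}$ are each nonnegative, we get $h_{\MM_a}(c_n) \to 0$ and $h_{\MM_b}(c_n) \to 0$ separately. Because $a, b \in \Qbar$, we may work over a number field $k$ containing $a$ and $b$; by Northcott's theorem applied to a comparison between $h_{\MM_a}$ and the standard Weil height $h$ (their difference is bounded, as noted after Lemma~\ref{MontelLemma}), the condition $h_{\MM_a}(c_n) \to 0$ together with the $c_n$ being distinct forces $[k(c_n):k] \to \infty$.

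Next I would apply Corollary~\ref{BREQUIcor2} twice. Fixing the archimedean place $v$ of $k$ and letting $\delta_n$ be the discrete probability measure on $\PP^1(\CC)$ supported equally on the $\Gal(\kbar/k)$-conjugates of $c_n$: since $h_{\MM_a}(c_n) \to 0$ and $\gamma(\MM_a) = 1$ (Proposition~\ref{M}(4) together with the product structure), the measures $\delta_n$ converge weakly to $\mu_{M_a}$; since $h_{\MM_b}(c_n) \to 0$ and $\gamma(\MM_b) = 1$, the same measures $\delta_n$ converge weakly to $\mu_{M_b}$. Hence $\mu_{M_a} = \mu_{M_b}$, and Theorem~\ref{Recovery} (equivalence of (2) and (3)) gives $a^d = b^d$, contradicting the hypothesis $a^d \neq b^d$. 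Therefore only finitely many $c \in \Qbar$ can satisfy $h_{\MM_a}(c) + h_{\MM_b}(c) < \varepsilon$ for a suitable $\varepsilon$ — but to turn "no infinite sequence with combined height tending to $0$" into "combined height bounded below by a fixed $\varepsilon$ off a finite set," I should be slightly more careful: the negation of the theorem's conclusion is precisely that for every $\varepsilon > 0$ there are infinitely many $c$ with $h_{\MM_a}(c) + h_{\MM_b}(c) < \varepsilon$, and taking $\varepsilon = 1/j$ and diagonalizing produces the desired sequence with combined height $\to 0$ and (by Northcott, after discarding repeats) infinitely many distinct values.

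I expect the main (really the only) subtlety to be the bookkeeping in the Northcott step: one must ensure that the sequence $\{c_n\}$ can be taken to consist of genuinely distinct algebraic numbers and that $|S_n| = [k(c_n):k] \to \infty$ so that Corollary~\ref{BREQUIcor2} applies — but this is immediate once one observes that $h_{\MM_a}$ differs from the Weil height by a bounded amount, so a bounded-height infinite set of distinct points must have unbounded degree. Everything else is a direct citation: the capacity normalization $\gamma(\MM_a) = \gamma(\MM_b) = 1$ comes from \S\ref{section:GGMS}, the double equidistribution is Corollary~\ref{BREQUIcor2}, and the passage from $\mu_{M_a} = \mu_{M_b}$ to $a^d = b^d$ is Theorem~\ref{Recovery}. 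In short, this theorem is nothing more than the contrapositive of Case~1 of the proof of Theorem~\ref{MainTheorem}, made quantitative by reading the equidistribution hypothesis as "height $\to 0$" rather than "preperiodic."
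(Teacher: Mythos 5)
Your proposal is correct and follows exactly the paper's route: the paper's own proof simply negates the conclusion to obtain a sequence $c_n$ with $h_{\MM_a}(c_n),h_{\MM_b}(c_n)\to 0$ and then reruns Case 1 of the proof of Theorem~\ref{MainTheorem} (double application of Corollary~\ref{BREQUIcor2}, so $\mu_{M_a}=\mu_{M_b}$, then Theorem~\ref{Recovery}), which is precisely what you spell out, including the Northcott bookkeeping already built into the corollary.
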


\begin{proof}
If we assume that the result is false, then there exists
a sequence $c_n$ of algebraic numbers such 
$\lim_{n \to \infty} h_{\MM_a}(c_n)= \lim_{n \to \infty} 
h_{\MM_b}(c_n) = 0$.
The proof now follows from the exact same argument as the proof of Case 1 of Theorem~\ref{MainTheorem}.
\end{proof}

\subsection{Remark:  non-arithmetic equidistribution}
Fix a point $a\in\CC$ and let $f_c(z) = z^d + c$.  It follows from the arguments in \cite{Dujardin:Favre} (specifically the proof of their Theorem 1) that the solutions $c\in\CC$ to equations 
	$$f_c^n(a) = f_c^{k(n)}(a),$$
weighted by multiplicity, will equidistribute to the equilibrium measure $\mu_{M_a}$ on $\partial M_a$ as $n\to\infty$ for any sequence $0 \leq k(n) < n$.  Thus, a sequence of {\em complete} sets of solutions to these equations will determine the measure $\mu_{M_a}$.  This would give an alternate argument in the proof of Theorem \ref{MainTheorem} under a much stronger hypothesis, treating the algebraic and transcendental cases simultaneously.  

However, it should be noted that under this stronger hypothesis, we are able to determine the measure $\mu_{M_a}$ and the value of $a^d$ by more classical means, without need for equidistribution.  By a straightforward argument using Montel's theorem (as in the proof of Theorem \ref{Recovery}), we find that the parameters $\{c: f_c^n(a) = f_c^{k(n)}(a)\}$ must accumulate everywhere on $\partial M_a$ as $n\to\infty$.  These parameters determine $M_a$ as a set, from which the measure $\mu_a$ is determined by classical potential theory.  The value of $a^d$ is then recovered from Theorem \ref{Recovery}.

\bigskip
\section{A variant of Theorem~\ref{MainTheorem}}

For a rational function $\varphi\in\CC(z)$, let ${\rm Preper}(\varphi)$ denote its set of preperiodic points in the Riemann sphere $\hat{\CC}$.  Our goal in this section is to prove Theorem~\ref{DynamicalTheorem}, whose statement we recall:

\begin{theorem*}
Let $\varphi,\psi \in \CC(z)$ be rational functions of degrees at least 2.
Then ${\rm Preper}(\varphi) \cap {\rm Preper}(\psi)$ is infinite if and only if
${\rm Preper}(\varphi) = {\rm Preper}(\psi)$.
\end{theorem*}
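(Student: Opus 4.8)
The plan is to mimic the structure of the proof of Theorem~\ref{MainTheorem}, replacing the family $\{z^d+c\}$ and the generalized Mandelbrot sets $M_a$ by the ``dynamical'' picture attached to a single rational map. The key reduction is: attach to each $\varphi$ of degree $d\ge 2$ a canonical (Call\'e--Silverman--Tate) height $h_\varphi$ on $\PP^1(\Qbar)$ when $\varphi$ is defined over a number field, or more generally, for $\varphi$ defined over a product formula field $k$, an adelic family of Green's functions $g_{\varphi,v}$ on $\PP^1_{\Berk,v}$ whose zero locus is the $v$-adic filled Julia set and whose associated measure $\mu_{\varphi,v}$ is the canonical invariant measure. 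Writing $\EE_\varphi = \{E_{\varphi,v}\}$ for the adelic filled Julia set, one checks (exactly as in Proposition~\ref{M}(4) and Proposition~\ref{capgreenprop}(2), using that $\varphi$ has good reduction at all but finitely many $v$) that $\gamma(\EE_\varphi)=1$, and that $h_{\EE_\varphi}(z)=0$ precisely for the preperiodic points of $\varphi$ in $\kbar$. One then applies the adelic equidistribution theorem (Theorem~\ref{AdelicHeightEquiTheorem}) to a sequence of Galois orbits of common preperiodic points of $\varphi$ and $\psi$ to conclude $\mu_{\varphi,v}=\mu_{\psi,v}$ for all $v$, and finally translates this equality of measures back into ${\rm Preper}(\varphi)={\rm Preper}(\psi)$.

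First I would dispose of the easy direction: if ${\rm Preper}(\varphi)={\rm Preper}(\psi)$ then the intersection is infinite, since ${\rm Preper}(\varphi)$ is always infinite (it is Zariski dense in $\PP^1$, e.g.\ it contains all periodic points). For the hard direction, assume the intersection is infinite and split into two cases just as before. \textbf{Case 1:} $\varphi,\psi$ are defined over $\Qbar$. Choose a number field $k$ over which both are defined. Any common preperiodic point $z$ lies in $\kbar$ (it is a root of $\varphi^{(\ell)}(T)=\varphi^{(m)}(T)$ with $\ell>m$), and $h_{\EE_\varphi}(z)=h_{\EE_\psi}(z)=0$; moreover the Galois conjugates of such $z$ are again common preperiodic points. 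Picking infinitely many of these, forming $S_n$ as nested unions of Galois orbits so that $|S_n|\to\infty$, Corollary~\ref{BREQUIcor2} (applied with the adelic set $\EE_\varphi$, which has capacity $1$) gives that the normalized counting measures $\delta_n$ converge weakly to $\mu_{\varphi,v}$; by symmetry they also converge to $\mu_{\psi,v}$, so $\mu_{\varphi,v}=\mu_{\psi,v}$ for every place $v$, in particular for an archimedean $v$, giving equality of the measures of maximal entropy on $\PP^1(\CC)$. \textbf{Case 2:} $\varphi$ (hence $\psi$, by the same root-of-a-polynomial argument) is defined over $\kbar$ where $k=\Qbar(t_1,\dots,t_r)$ is finitely generated over $\Qbar$ but not algebraic; endow $k$ with a product formula structure (Remark~\ref{fgfieldremark}). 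Exactly as above, Theorem~\ref{AdelicHeightEquiTheorem} yields $\mu_{\varphi,v}=\mu_{\psi,v}$ in $\PP^1_{\Berk,v}$ for all non-archimedean places $v$ of $k$, hence $E_{\varphi,v}=E_{\psi,v}$ (the support-of-equilibrium-measure statement, the Berkovich analogue of Proposition~\ref{capgreenprop}(1)), and then $g_{\varphi,v}=g_{\psi,v}$ for all $v$.

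The final step is to convert ``$g_{\varphi,v}=g_{\psi,v}$ for all $v$'' into ${\rm Preper}(\varphi)={\rm Preper}(\psi)$ as subsets of $\PP^1(\CC)$. Over a number field this is immediate once we know the canonical heights coincide: $h_\varphi=h_\psi$ on $\PP^1(\Qbar)$, and a point is preperiodic for a given map iff its canonical height for that map is zero (Northcott plus the standard characterization), so the two preperiodic sets agree inside $\PP^1(\Qbar)$; but in fact equality of the archimedean measures $\mu_{\varphi,\infty}=\mu_{\psi,\infty}$ forces equality of Julia sets, and one argues as in Theorem~\ref{Recovery} / via a Montel-type argument that the preperiodic points are dense in the common Julia set and that a non-repelling-type analysis pins down the rest, giving ${\rm Preper}(\varphi)={\rm Preper}(\psi)$ in $\PP^1(\CC)$. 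In the transcendental case one instead invokes the function-field analogue of Benedetto's theorem (Theorem~\ref{BenedettoTheorem}, in the rational-function form): for $\varphi$ not ``isotrivial'' over $k$, a point $z\in\kbar$ is preperiodic for $\varphi$ iff $z\in E_{\varphi,v}$ for all $v$; since $E_{\varphi,v}=E_{\psi,v}$ for all $v$ we get ${\rm Preper}(\varphi)\cap\kbar={\rm Preper}(\psi)\cap\kbar$, and since every complex preperiodic point of either map lies in $\kbar$, we conclude ${\rm Preper}(\varphi)={\rm Preper}(\psi)$ in $\PP^1(\CC)$.

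The main obstacle I anticipate is the last step in full generality over $\CC$: equality of the invariant measures $\mu_{\varphi,\infty}=\mu_{\psi,\infty}$ (equivalently $h_\varphi=h_\psi$ on $\PP^1(\Qbar)$) does give equality of Julia sets, but deducing that \emph{all} preperiodic points coincide (not just those of height zero lying on the Julia set) requires a genuinely complex-analytic argument — one must rule out, for instance, differing attracting cycles, using that the density of preperiodic points in the Julia set together with the coincidence of Julia sets and the structure of Fatou components forces the Fatou dynamics to match up on preperiodic orbits. A second subtlety is handling the isotriviality hypothesis in the function-field case: one needs to check that a common-preperiodic-point hypothesis is incompatible with $\varphi$ being isotrivial while $\psi$ is not (or reduce to the constant-field case by a specialization), paralleling the verification in Corollary~\ref{BenedettoCor} that $z^d+c$ is never trivial over $k$ when $c\notin k_0$.
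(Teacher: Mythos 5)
Your overall strategy is the paper's: equidistribute Galois orbits of common preperiodic points at every place, conclude $\mu_{\varphi,v}=\mu_{\psi,v}$, pass to equality of canonical heights, and then use a ``height zero iff preperiodic'' characterization. But there is a genuine gap in your case division. Your Case 2 is supposed to cover every situation in which the coefficient field $k$ is transcendental over $\Qbar$, and there you invoke the function-field characterization of preperiodic points as the height-zero points (Theorem~\ref{BakerIsotrivialityTheorem}, your ``rational-function Benedetto''). That characterization requires non-isotriviality and genuinely fails for isotrivial maps, and a map with transcendental coefficients can perfectly well be isotrivial (conjugate by a M\"obius transformation with transcendental coefficients to a map over $\Qbar$). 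You acknowledge this as a ``subtlety'' to be handled by showing that isotrivial-versus-nonisotrivial is ``incompatible'' with common preperiodic points, or by specialization, but you give no argument, and neither suggestion is how the paper proceeds: it isolates a third case in which $\varphi$ is conjugate to a map over $\Qbar$; after conjugating, $\varphi$ is defined over the constant field, hence has good reduction at every $v\in\cM_k$, so $\mu_{\varphi,v}$ is the point mass at the Gauss point for all $v$; the equidistribution-derived equality $\mu_{\varphi,v}=\mu_{\psi,v}$ then forces $\psi$ to have good reduction at every $v$, hence by Proposition~\ref{BakerIsotrivialityProp} to be defined over $\Qbar$ as well, reducing everything to the number-field case. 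Without this step (or a substitute) your proof does not cover the mixed case, which is exactly where your machinery breaks.

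Two further points. First, the ``main obstacle'' you anticipate in the algebraic case is not there: if $\varphi,\psi$ are defined over $\Qbar$, every complex preperiodic point of either map is a root of $\varphi^{(\ell)}(z)=\varphi^{(m)}(z)$ (or its analogue for $\psi$), hence lies in $\Qbar$; so once $\hhat_{\varphi}=\hhat_{\psi}$ on $\PP^1(\kbar)$, the number-field fact that height zero is equivalent to preperiodicity already yields ${\rm Preper}(\varphi)={\rm Preper}(\psi)$ inside $\PP^1(\CC)$. No Montel or Fatou-component analysis is needed, and the complex-analytic argument you sketch there is not carried out; note that you use precisely this algebraicity observation in your transcendental case, so it should be applied in Case 1 as well. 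Second, the capacity-theoretic framework you set up (adelic ``filled Julia sets'' $E_{\varphi,v}$ of capacity $1$, Theorem~\ref{AdelicHeightEquiTheorem} and Corollary~\ref{BREQUIcor2}) does not literally make sense for rational maps: there is no filled Julia set relative to $\infty$, and $\mu_{\varphi,v}$ is in general not the equilibrium measure of any compact set relative to $\infty$. The correct tool is the equidistribution theorem for canonical heights of rational maps (Theorem~\ref{RationalEquiThm}), combined with the Arakelov--Green function identities (\ref{mudeterminesgreen}) and (\ref{greendeterminesh}) to pass from equality of the measures $\mu_{\varphi,v}=\mu_{\psi,v}$ at all places to equality of the canonical heights; with those substitutions, your Cases 1 and 2 do follow the paper's argument.
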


\noindent
We conclude this section with the proof of Corollary \ref{DynamicalCor}, and we state two further consequences of the proof of Theorem \ref{DynamicalTheorem} (Theorems \ref{RefinedDynamicalTheorem} and \ref{DynamicalTheoremBis}).

\subsection{Equidistribution}
The main references for this section are \cite{BREQUI} and \cite[Chapter 10]{BRBook}; see also \cite{FRL2}.
Let $k$ be a product formula field, and let $\varphi \in k(T)$ be
a rational function of degree $d \geq 2$.
Associated to $\varphi$ is the {\em Call-Silverman canonical height
function} $\hhat_{\varphi} : \PP^1(\kbar) \to \RR_{\geq 0}$.
If $k$ is a number field, then a point $P \in
\PP^1(\kbar)$ is preperiodic for $\varphi$ if and only if 
$\hhat_{\varphi}(P) = 0$.  In general, things are a little more subtle;
see Theorem~\ref{BakerIsotrivialityTheorem} below.

\medskip

For each $v \in \cM_k$, the $v$-adic {\em Arakelov-Green function} of $\varphi$ is a function
 $$g_{\varphi,v}: \PP^1_{\Berk,v} \times \PP^1_{\Berk,v} \to \RR \cup \{ +\infty \}$$ 
that takes the value $+\infty$ on the intersection of the diagonal with $\PP^1(\CC_v)$ and is finite-valued
elsewhere.  There is also a canonical probability measure $\mu_{\varphi,v}$ on $\PP^1_{\Berk,v}$; when $v$ is archimedean $\mu_{\varphi, v}$ is the measure of maximal entropy for $\varphi$ on $\PP^1(\CC)$ studied in \cite{Lyubich:entropy}, \cite{FLM}.  Each of $\mu_{\varphi,v}$ and $g_{\varphi,v}(x,y)$ determines the other by the equation
\begin{equation}  \label{mudeterminesgreen}
	\Delta_x \, g_{\varphi,v}(x,y) =  \mu_{\varphi,v} - \delta_y
\end{equation}
for every fixed $y \in \PP^1_{\Berk,v}$, where $g$ is normalized so that 
  $$\iint_{\PP^1_{\Berk,v} \times \PP^1_{\Berk,v}}
			g_{\varphi,v}(x,y) \, d\mu_{\varphi,v}(x) \, d\mu_{\varphi,v}(y) = 0.$$
(The Laplacian in (\ref{mudeterminesgreen}) is the negative of the one studied in \cite{BRBook}.)
Moreover, for all $x,y \in \PP^1(\kbar)$ with $x \neq y$ we have
\begin{equation}
\label{greendeterminesh}
\hhat_{\varphi}(x) + \hhat_{\varphi}(y) = 
\sum_{v \in \cM_k} N_v \, g_{\varphi,v}(x,y).
\end{equation}

\medskip

If $v$ is archimedean, it is well known that $\supp(\mu_{\varphi,v})$ is
equal to the complex Julia set of $\varphi$.
For $v$ non-archimedean, the Berkovich Julia set of $\varphi$ is
{\em defined} in \cite[Chapter 10]{BRBook} to be the support of 
$\mu_{\varphi,v}$.  This turns out to be equivalent to several other,
more topological, characterizations of the Berkovich Julia set.

\medskip

If $S$ is a finite subset of $\PP^1(\ksep)$ which is stable under
$\Gal(\ksep/k)$, we define 
\[
\hhat_{\varphi}(S) = \frac{1}{|S|} \sum_{P \in S} \hhat_{\varphi}(P).
\]
The following equidistribution theorem was proved independently by
Baker--Rumely \cite{BREQUI}, Chambert-Loir  \cite{CL}, and Favre--Rivera-Letelier \cite{FRL2} in the
number field case.  For the present formulation in terms of an
arbitrary product formula field, see \cite[Theorem 10.24]{BRBook}.

\begin{theorem}
\label{RationalEquiThm}
Let $k$ be a product formula field, 
and let $\varphi \in k(T)$ be a rational function of degree $d \geq 2$.
Suppose $S_n$ is a sequence of $\Gal(\ksep/k)$-invariant finite subsets of $\ksep$ with $|S_n| \to \infty$
and $\hhat_{\varphi}(S_n) \to 0$. 
Fix $v \in \cM_k$, and for each $n$ let $\delta_n$ be the discrete probability measure on
$\PP^1_{\Berk,v}$ supported equally on the elements of $S_n$.
Then the sequence of measures $\{\delta_n\}$ converges weakly
to $\mu_{\varphi,v}$ on $\PP^1_{\Berk,v}$.
\end{theorem}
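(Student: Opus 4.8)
The plan is to deduce Theorem~\ref{RationalEquiThm} from the potential-theoretic (``energy'') method --- the same argument that proves the number field cases in \cite{BREQUI,CL,FRL2} --- the only new feature over a general product formula field being that at non-archimedean places the relevant potential theory lives on the Berkovich line $\PP^1_{\Berk,v}$, for which one invokes the Laplacian, capacity, and energy formalism of \cite[Chapters~5--10]{BRBook} (paralleled by \cite{FRL2,Thuillier}). Write $S_n=\{x_1,\dots,x_N\}$ with $N=|S_n|\to\infty$, and let $\delta_n$ denote its discrete probability measure, viewed on $\PP^1_{\Berk,w}$ for each $w\in\cM_k$. The first step is the global energy identity: by the local--global decomposition (\ref{greendeterminesh}) one has $\sum_{w\in\cM_k}N_w\,g_{\varphi,w}(x_i,x_j)=\hhat_\varphi(x_i)+\hhat_\varphi(x_j)$ for $i\neq j$, and summing over ordered pairs $i\neq j$ and dividing by $N^2$ gives
\[
\sum_{w\in\cM_k}N_w\,A_{n,w}=\frac{2(N-1)}{N}\,\hhat_\varphi(S_n)\longrightarrow 0,\qquad A_{n,w}:=\frac{1}{N^2}\sum_{i\neq j}g_{\varphi,w}(x_i,x_j).
\]
For all but finitely many $w$ (places of good reduction for $\varphi$) the Arakelov--Green function $g_{\varphi,w}$ is nonnegative, so $A_{n,w}\geq 0$ there; fix a finite set $T\ni v$ containing the remaining places.

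Next I would establish the key local estimate, a lower-semicontinuity bound for the energy. For a fixed $w$, equation (\ref{mudeterminesgreen}) and the chosen normalization of $g_{\varphi,w}$ give $\int g_{\varphi,w}(x,y)\,d\mu_{\varphi,w}(y)\equiv 0$ (the left side is harmonic on all of $\PP^1_{\Berk,w}$, hence constant, and the constant vanishes by normalization), and the energy pairing of $g_{\varphi,w}$ is positive semidefinite on signed measures of total mass zero. Expanding $\iint g_{\varphi,w}\,d(\nu-\mu_{\varphi,w})\,d(\nu-\mu_{\varphi,w})\geq 0$ and cancelling the cross terms then yields, for every probability measure $\nu$ on $\PP^1_{\Berk,w}$,
\[
I_w(\nu):=\iint g_{\varphi,w}(x,y)\,d\nu(x)\,d\nu(y)\ \geq\ 0,
\]
with equality \emph{if and only if} $\nu=\mu_{\varphi,w}$. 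Since $g_{\varphi,w}$ is lower semicontinuous and bounded below, I would write it as an increasing limit of continuous kernels $g_{\varphi,w}^{(\varepsilon)}\leq g_{\varphi,w}$; applying weak convergence to the $g_{\varphi,w}^{(\varepsilon)}$, using $N\to\infty$ to absorb the diagonal contribution $\tfrac{1}{N^2}\sum_i g_{\varphi,w}^{(\varepsilon)}(x_i,x_i)=O_\varepsilon(1/N)$, and letting $\varepsilon\to 0$ by monotone convergence, one obtains $I_w(\nu)\leq\liminf_n A_{n,w}$ for every weak-$*$ subsequential limit $\nu$ of $\{\delta_n\}$ on $\PP^1_{\Berk,w}$.

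Finally I would assemble these ingredients. Given any subsequential limit $\nu$ of $\{\delta_n\}$ at $v$, pass to a further subsequence along which $\delta_n$ converges on $\PP^1_{\Berk,w}$ to some $\nu_w$ for each $w$ in the finite set $T$ (so $\nu_v=\nu$). Summing the previous estimate over $w\in T$, discarding the nonnegative terms with $w\notin T$, and using the global identity of the first step gives
\[
\sum_{w\in T}N_w\,I_w(\nu_w)\ \leq\ \liminf_n\sum_{w\in\cM_k}N_w\,A_{n,w}\ =\ 0 .
\]
Since each $I_w(\nu_w)\geq 0$ and $N_w\geq 1$, every $I_w(\nu_w)=0$, hence $\nu_w=\mu_{\varphi,w}$, and in particular $\nu=\mu_{\varphi,v}$. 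Thus every weak-$*$ subsequential limit of $\{\delta_n\}$ at $v$ equals $\mu_{\varphi,v}$, and since the probability measures on the compact space $\PP^1_{\Berk,v}$ form a compact metrizable space, the whole sequence converges: $\delta_n\to\mu_{\varphi,v}$.

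The hard part is entirely in the second step --- or rather, in what it rests on: the non-archimedean potential theory of $\PP^1_{\Berk,v}$ (the Laplacian and its properties, the canonical measures $\mu_{\varphi,v}$, the Arakelov--Green functions satisfying (\ref{mudeterminesgreen}), and the positive-definiteness of the associated energy pairing on mass-zero measures), together with the mollification that turns lower semicontinuity of $g_{\varphi,v}$ into the inequality $I_v(\nu)\leq\liminf_n A_{n,v}$. Once this machinery from \cite{BRBook} is in hand, the passage from number fields --- where \cite{BREQUI,CL,FRL2} supply the archimedean potential theory --- to an arbitrary product formula field is purely formal, and is exactly what is recorded in \cite[Theorem~10.24]{BRBook}.
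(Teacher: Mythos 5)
The paper does not prove Theorem \ref{RationalEquiThm} at all: it is invoked as a black box, with the proof attributed to \cite{BREQUI,CL,FRL2} in the number field case and to \cite[Theorem 10.24]{BRBook} for general product formula fields. Your energy-pairing argument (the global height identity from (\ref{greendeterminesh}), nonnegativity of $g_{\varphi,w}$ at the cofinitely many good-reduction places, the regularized local bound $I_w(\nu)\leq\liminf_n A_{n,w}$, and the energy minimization principle forcing every weak limit to be $\mu_{\varphi,v}$) is exactly the strategy of those cited sources and is correct in outline; the one slip is the claim that $\PP^1_{\Berk,v}$, hence its space of probability measures, is metrizable, which can fail when $\CC_v$ is not separable (e.g.\ for constant field $\CC$), but this is harmless since the subsequence extraction and the monotone regularization go through verbatim with subnets and Radon-measure monotone convergence.
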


%

\subsection{Isotriviality}
Let $k$ be an abstract function field; see \S\ref{function field} for the definition.  
A map $\varphi \in k(T)$ of degree $d$ 
is said to have {\em good reduction} at 
$v \in \cM_k$ if $\varphi \ = \ f/g$ for some $f,g \in \Ocal_v[T]$ 
whose reductions $\overline{f},\overline{g}$
are polynomials in $\tilde{k}_v[T]$ with no common roots in any
algebraic closure of $\tilde{k}_v$ and such that at
least one of $\overline{f},\overline{g}$ has degree $d$.
(Here $\Ocal_v$ denotes the valuation ring of $k_v$, and $\tilde{k}_v$
its residue field.)


The following result is proved in \cite{BakerFFH}; the equivalence of
(2) and (3) is due originally to Rivera-Letelier and another proof of the equivalence of (1) and (2) can be found in \cite{PetscheSzpiroTucker}.

\begin{proposition}
\label{BakerIsotrivialityProp}
Let $k$ be an abstract function field, and let $\varphi \in k(T)$ be a rational map of degree at least 2.
Then the following are equivalent:
\begin{enumerate}
\item $\varphi$ is defined over the constant field of $k$
\item $\varphi$ has good reduction at every $v \in \cM_k$
\item The canonical measure $\mu_{\varphi,v}$ is a point mass supported at the Gauss point of $\PP^1_{\Berk,v}$ for all 
$v \in \cM_k$.
\end{enumerate}
\end{proposition}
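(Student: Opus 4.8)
The plan is to prove $(1)\Rightarrow(2)$, then the equivalence $(2)\Leftrightarrow(3)$, and finally $(2)\Rightarrow(1)$. The first implication is purely formal; the equivalence $(2)\Leftrightarrow(3)$ is a local statement that I would extract from Rivera-Letelier's analysis of the reduction of non-archimedean maps; and $(2)\Rightarrow(1)$ is where I would put the real work, via a global geometric argument. For $(1)\Rightarrow(2)$ I would write $\varphi=f/g$ with $f,g\in k_0[T]$ coprime and $\max(\deg f,\deg g)=d$. For every $v\in\cM_k$, a nonzero element of $k_0$ has $|\cdot|_v=1$ by the product formula, so all coefficients of $f$ and $g$ lie in $\Ocal_v^\times\cup\{0\}$; hence $\overline f,\overline g\in\tilde{k}_v[T]$ keep their degrees and at least one has degree $d$. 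Since $k_0\hookrightarrow\tilde{k}_v$, the element $\Res(f,g)\in k_0^\times$ has nonzero reduction, so $\overline f,\overline g$ remain coprime; thus $\varphi$ has good reduction at every $v$.

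For $(2)\Leftrightarrow(3)$ I would fix $v$ and use that $\mu_{\varphi,v}=\lim_{n\to\infty}d^{-n}(\varphi^n)^*\delta_y$ for any non-exceptional $y\in\PP^1_{\Berk,v}$, and that $\supp\mu_{\varphi,v}$ is by definition the Berkovich Julia set of $\varphi$ at $v$ (\cite[Chapter 10]{BRBook}). If $\varphi$ has good reduction at $v$ then so does $\varphi^n$ for every $n$ (good reduction is stable under composition), and a good-reduction map fixes the Gauss point $\zeta_\Gauss$ and pulls it back with full multiplicity $d$; taking $y=\zeta_\Gauss$ then gives $\mu_{\varphi,v}=\delta_{\zeta_\Gauss}$. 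Conversely, if $\mu_{\varphi,v}=\delta_{\zeta_\Gauss}$ then the Berkovich Julia set of $\varphi$ is the single point $\zeta_\Gauss$, which is therefore totally invariant; by Rivera-Letelier's description of the action on the reduction, a map fixing $\zeta_\Gauss$ whose reduction has degree $<d$ produces additional Julia points (from the nontrivial Fatou/Julia dynamics of the lower-degree reduction together with the annuli it creates), so the reduction of $\varphi$ at $v$ has degree $d$, i.e. $\varphi$ has good reduction at $v$.

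For $(2)\Rightarrow(1)$ I would realize $k$ as the function field of a normal projective variety $X$ over its constant field $k_0$, with the places of $\cM_k$ corresponding to the prime divisors of $X$ (cf. Remark~\ref{fgfieldremark}). Good reduction at a prime divisor $D$ says precisely that $\varphi$, regarded as a rational $X$-map $\PP^1_X\dashrightarrow\PP^1_X$, extends to a morphism near the generic point of $D$; performing this for every $D$ and invoking the valuative criterion of properness together with normality of $X$ to fill in the remaining locus (of codimension $\geq 2$) produces a morphism of $X$-schemes $\Phi\colon\PP^1_X\to\PP^1_X$ whose generic fiber is $\varphi$. Such a $\Phi$ amounts to a morphism $X\to\Rat_d$ into the parameter space of degree-$d$ self-maps of $\PP^1$. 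But $\Rat_d$ is the complement in $\PP^{2d+1}$ of the resultant hypersurface $\{\Res=0\}$, which is ample, so $\Rat_d$ is affine; since $X$ is proper and $H^0(X,\Ocal_X)=k_0$ (as $k_0$ is algebraically closed in $k$), every morphism $X\to\Rat_d$ is constant with value a single $k_0$-point, namely $\varphi$. Hence $\varphi$ is defined over $k_0$.

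The step I expect to be the main obstacle is the gluing in $(2)\Rightarrow(1)$: one must check that the local good-reduction models at the various prime divisors are genuinely compatible and extend to a global $X$-morphism (this is where the precise definition of good reduction via coprime lifts, one of full degree $d$, enters, alongside the valuative criterion for $\PP^1$), and that normality of $X$ really does permit extension across the codimension-$\geq 2$ locus. Once that is in place, the affineness of $\Rat_d$ closes the argument cleanly; the remaining ingredients ($(1)\Rightarrow(2)$ and the two directions of $(2)\Leftrightarrow(3)$) are either formal or available from Rivera-Letelier's theory.
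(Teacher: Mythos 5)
First, for calibration: the paper does not prove this proposition at all --- it is quoted from \cite{BakerFFH}, with the equivalence of (2) and (3) attributed to Rivera-Letelier and an alternative proof of (1)$\Leftrightarrow$(2) credited to \cite{PetscheSzpiroTucker}. So you are supplying an argument where the paper supplies citations, and your overall architecture (formal (1)$\Rightarrow$(2); Rivera-Letelier's local theory for (2)$\Leftrightarrow$(3); a global ``classifying map into the affine variety $\Rat_d$'' argument for (2)$\Rightarrow$(1)) is exactly the strategy of the cited sources. Your (1)$\Rightarrow$(2) is correct as written: nonzero constants are $v$-units at every place by the product formula, $k_0$ injects into each residue field, and the (degree-$d$ homogeneous) resultant is a nonzero constant, hence a unit everywhere.

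Two steps do have genuine gaps. In (2)$\Rightarrow$(3) you take $y=\zeta_{\Gauss}$ in $\mu_{\varphi,v}=\lim_n d^{-n}(\varphi^n)^*\delta_y$; but for a good-reduction map $\zeta_{\Gauss}$ is totally invariant, hence \emph{exceptional}, and the preimage-equidistribution theorem excludes exceptional base points (over $\CC$ the analogous choice $y=0$ for $z^d$ gives the wrong limit). The repair is to argue that for good reduction the Berkovich Julia set is $\{\zeta_{\Gauss}\}$ and $\mu_{\varphi,v}$ is supported on the Julia set, or simply to cite the computation of $\mu_{\varphi,v}$ for good-reduction maps in \cite[Ch.~10]{BRBook}; your converse direction, via the local degree of $\varphi$ at $\zeta_{\Gauss}$ being $d$, is Rivera-Letelier's criterion and is fine. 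In (2)$\Rightarrow$(1), the step that fails as stated is ``invoking the valuative criterion of properness together with normality of $X$ to fill in the remaining locus of codimension $\geq 2$'': one cannot in general extend a morphism into a proper target across a codimension-two set (projection $\PP^2 \dashrightarrow \PP^1$ from a point is the standard counterexample); the valuative criterion only handles codimension-one points. Fortunately no such extension is needed: by normality the classifying map to $\PP^{2d+1}$ given by the coefficient point of $\varphi$ is defined on an open $U$ with $\operatorname{codim}(X \setminus U)\geq 2$; good reduction at every prime divisor, plus the fact that the preimage of the hypersurface $\{\Res = 0\}$ is either empty or of pure codimension one in $U$ (the generic point lands in $\Rat_d$), forces $U$ to map into $\Rat_d$; then the coordinates of the affine variety $\Rat_d$ pull back to regular functions on $U$, which extend over $X$ by normality and so lie in $H^0(X,\Ocal_X)$. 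You should also justify $H^0(X,\Ocal_X)=k_0$: the constant field is relatively algebraically closed in $k$, since an element algebraic over $k_0$ is integral over it, hence lies in every $\Ocal_v$, hence in $k_0$ by definition. Finally, note that your whole argument for (2)$\Rightarrow$(1) presupposes that the abstract function field $k$, with its given set of places $\cM_k$, is realized as the function field of a normal projective variety whose prime divisors give exactly the places in $\cM_k$; this holds for the fields to which the paper applies the proposition (Remark~\ref{fgfieldremark}), but it is an extra hypothesis relative to the bare definition of an abstract function field, so it should be stated or the argument phrased in the setting of the cited references.
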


Assume now that the constant field of
$k$ is algebraically closed.
A rational map $\varphi \in k(T)$ is called {\em isotrivial over $k$} if it is
conjugate, by a  linear fractional transformation defined over $\kbar$, 
to a rational map defined over the constant field of $k$. 
The following result is also proved in \cite{BakerFFH}:

\begin{theorem}
\label{BakerIsotrivialityTheorem}
Let $k$ be an abstract function field with an algebraically closed
field of constants.
If $\varphi \in k(T)$ is a rational map of degree $d \geq 2$
which is not isotrivial, then a point $P \in \PP^1(\kbar)$ satisfies $\hhat_{\varphi}(P)=0$
if and only if $P$ is preperiodic for $\varphi$.
\end{theorem}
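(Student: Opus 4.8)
The plan is to prove Theorem~\ref{BakerIsotrivialityTheorem} by combining the existing structural input---Proposition~\ref{BakerIsotrivialityProp}---with the adelic equidistribution theorem (Theorem~\ref{RationalEquiThm}), mimicking the strategy of the classical Northcott argument in the function field setting. Since the direction ``preperiodic $\Rightarrow \hhat_\varphi(P)=0$'' is immediate (a preperiodic point has finite orbit, so all its iterates have bounded local contributions, forcing the canonical height to vanish by the standard telescoping/functoriality of $\hhat_\varphi$), the content is entirely in the converse: if $\varphi$ is not isotrivial and $\hhat_\varphi(P)=0$, then $P$ is preperiodic.

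The first step is to reduce to the case where $\varphi$ is actually defined over $k$ (it is) but \emph{not} defined over the constant field $k_0$: if $\varphi$ is not isotrivial it is in particular not conjugate over $\kbar$ to something over $k_0$, so by Proposition~\ref{BakerIsotrivialityProp} there is some place $v_0 \in \cM_k$ at which $\varphi$ does not have good reduction, equivalently at which the canonical measure $\mu_{\varphi,v_0}$ is \emph{not} a point mass at the Gauss point. This is the crucial non-triviality that will let equidistribution bite. Next, suppose for contradiction that $P \in \PP^1(\kbar)$ satisfies $\hhat_\varphi(P)=0$ but $P$ is not preperiodic. Then the forward orbit $\{P, \varphi(P), \varphi^2(P), \ldots\}$ is infinite, and each iterate $\varphi^m(P)$ also has canonical height zero (since $\hhat_\varphi(\varphi(x)) = d\,\hhat_\varphi(x)$). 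Let $S_n = \bigcup_{m=0}^{n} \{\text{Galois conjugates of } \varphi^m(P)\}$; this is a $\Gal(\ksep/k)$-stable finite subset of $\PP^1(\ksep)$ with $\hhat_\varphi(S_n)=0$ for all $n$ and $|S_n|\to\infty$ (the orbit being infinite, with Galois orbits of bounded size in each degree by the height-zero constraint, or one can argue more simply that the sets are nested and strictly growing). Applying Theorem~\ref{RationalEquiThm} at the place $v_0$, the measures $\delta_n$ supported on $S_n$ converge weakly to $\mu_{\varphi,v_0}$.

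The final step is to extract the contradiction: all the points of every $S_n$ lie in $\PP^1(k)$... no---they lie in $\PP^1(\kbar)$, but more importantly, since $\hhat_\varphi$ vanishes on all of them, another application of equidistribution (or of Proposition~\ref{BakerIsotrivialityProp}) must be leveraged against a \emph{second} constraint. The cleanest route: by the analogue of the Fekete--Szeg{\"o}/Northcott dichotomy, height-zero points for a non-isotrivial map are constrained, but the real obstruction argument is that the $S_n$ accumulate only on the \emph{preperiodic} points, which (by a separate input from \cite{BakerFFH}) cannot be Zariski-dense / cannot equidistribute to $\mu_{\varphi,v_0}$ unless $\mu_{\varphi,v_0}$ is itself atomic at the Gauss point---contradicting the choice of $v_0$. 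I expect the main obstacle to be exactly this last move: making rigorous why an infinite orbit of height-zero points, when equidistributing to the canonical measure, is incompatible with non-isotriviality. The right tool is likely that the set of \emph{all} preperiodic points of a non-isotrivial $\varphi$ over $k$ forms a set of bounded height whose $v_0$-adic closure is contained in the Berkovich Julia set but is ``too small'' in a capacity-theoretic sense to support $\mu_{\varphi,v_0}$ unless the Julia set degenerates; alternatively, one invokes directly the result from \cite{BakerFFH} that $\hhat_\varphi$ is a \emph{proper} height (bounded-height sets are finite modulo the constant-field ambiguity) precisely when $\varphi$ is non-isotrivial, which immediately gives $|S_n|$ bounded, contradicting $|S_n|\to\infty$. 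I would structure the write-up to use whichever of these two formulations \cite{BakerFFH} actually provides, citing it as a black box, so that the equidistribution machinery is only needed for the ``harder'' implication if the properness statement is not directly available.
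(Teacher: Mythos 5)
The paper itself does not prove Theorem~\ref{BakerIsotrivialityTheorem}: it is imported as a black box from \cite{BakerFFH}, so there is no internal argument to compare yours against, and your proposal has to be judged on its own terms. Judged that way, it has a genuine gap at precisely the step you flag. Equidistribution gives you no contradiction here. Applying Theorem~\ref{RationalEquiThm} at a place $v_0$ of bad reduction tells you that your measures $\delta_n$ (supported on the Galois orbits of the iterates $\varphi^m(P)$, all of canonical height zero) converge weakly to $\mu_{\varphi,v_0}$ --- and that is perfectly consistent with everything in sight. With a single map there is nothing to play the limit measure off against: the strength of Theorem~\ref{RationalEquiThm} in this paper is in comparing \emph{two} dynamical systems that share infinitely many small points (Theorem~\ref{DynamicalTheorem}), not in detecting preperiodicity of one orbit. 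Your proposed contradiction, that ``the $S_n$ accumulate only on the preperiodic points, which cannot equidistribute to $\mu_{\varphi,v_0}$ unless it is atomic at the Gauss point,'' fails twice over: first, you have no right to assert the $S_n$ accumulate on preperiodic points (the points of $S_n$ are exactly the ones whose preperiodicity is in question); second, even if they did, preperiodic points are dense in the Berkovich Julia set, which \emph{is} the support of $\mu_{\varphi,v_0}$, so there is no tension whatsoever with $v_0$ being a place of bad reduction or with $\mu_{\varphi,v_0}$ being non-atomic. No capacity-theoretic ``smallness'' of the closure of the orbit is available either; nothing in the height-zero hypothesis prevents a sequence of type~I points from converging to type~II points of the Julia set.

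Your fallback route is the right one, but it concedes the whole point: the statement that for non-isotrivial $\varphi$ the sets of points of bounded degree over $k$ and small canonical height are finite is (essentially) the main theorem of \cite{BakerFFH}, and the deduction you sketch --- the orbit of $P$ lies in the fixed finite extension $k(P)$, every iterate has $\hhat_\varphi = 0$, hence by that finiteness the orbit is finite and $P$ is preperiodic --- is exactly how Theorem~\ref{BakerIsotrivialityTheorem} follows from it. So invoking it as a black box does not produce an independent proof; it reproduces, one step earlier, the citation the paper already makes, and the actual content of \cite{BakerFFH} is established by explicit height estimates, not by Proposition~\ref{BakerIsotrivialityProp} plus equidistribution. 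One smaller correction: the hedge ``finite modulo the constant-field ambiguity'' is misplaced --- for non-isotrivial maps the finiteness is unconditional; the constant-field phenomenon (infinitely many non-preperiodic points of height zero) is exactly what happens in the isotrivial case, which is why non-isotriviality is the hypothesis.
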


Modulo the assumption that the constant field of $k$ is algebraically closed,
Theorem~\ref{BakerIsotrivialityTheorem} is a generalization of Theorem~\ref{BenedettoTheorem}.
See \cite{ChatzHrush1,ChatzHrush2} for a higher-dimensional generalization of 
Theorem~\ref{BakerIsotrivialityTheorem} (which also does not require the assumption that the constant field
of $k$ is algebraically closed).

\subsection{Preperiodic points for rational functions}
We can now give the proof of Theorem~\ref{DynamicalTheorem}.

\begin{proof}[Proof of Theorem~\ref{DynamicalTheorem}]
First note that the set of preperiodic points of a rational function of degree $\geq 2$ is always infinite; see \cite[\S{6.2}]{BeardonBook}.  
Therefore, the equality ${\rm Preper}(\varphi) = {\rm Preper}(\psi)$ clearly implies that the intersection is infinite.  It remains to prove the reverse implication.  

Suppose there is an infinite sequence $a_n$ of complex numbers which are preperiodic for both $\varphi$ and $\psi$.

\smallskip

{\bf Case 1:} $\varphi,\psi \in \Qbar(T)$.

\smallskip

In this case, $a_n$ is algebraic for all $n$.  Let $k$ be a number field over which both $\varphi$ and $\psi$
are defined.  Fix a place $v$ of $k$ and let $\delta_n$ be the discrete probability measure on $\PP^1(k_v)$ supported equally on the $\Gal(\kbar/k)$-conjugates of $a_n$.  By Theorem~\ref{RationalEquiThm}, the sequence $\delta_n$ converges weakly on $\PP^1_{Berk,v}$ to both $\mu_{\varphi,v}$ and $\mu_{\psi,v}$, hence $\mu_{\varphi,v} = \mu_{\psi,v}$.  By (\ref{mudeterminesgreen}), we therefore have 
$g_{\mu_{\varphi},v}(x,y) = g_{\mu_{\psi},v}(x,y)$
for all $v \in \cM_k$ and all $x,y \in \PP^1_{\Berk}$.
From the identity (\ref{greendeterminesh}),
it follows (letting $y = a_n$ for some $n$, so that $\hhat_{\varphi}(y)
= \hhat_{\psi}(y) = 0$) that $\hhat_{\varphi}(x) = \hhat_{\psi}(x)$
for all $x \in \PP^1(\kbar)$.  As all preperiodic points lie in $\kbar$, we may conclude that ${\rm Preper}(\varphi) = {\rm Preper}(\psi)$.

\smallskip

{\bf Case 2:} Neither $\varphi$ nor $\psi$ is conjugate to a rational map defined over $\Qbar$.

\smallskip

In this case, all $a_n$'s are defined over $\kbar$, where $k$ is the finitely generated field extension of $\Qbar$
generated by the coefficients of $\varphi$ and $\psi$.
Equip $k$ with a product formula structure as in Remark~\ref{fgfieldremark}.
Let $T_m$ be the set of $\Gal(\kbar/k)$-conjugates of $a_m$, and set 
\begin{equation}
\label{eq:Sn2}
S_n = \bigcup_{m=1}^n T_m .
\end{equation}
For each $v \in \cM_k$, let $\delta_n$ be the discrete probability measure on $\PP^1_{\Berk,v}$ supported equally on the 
elements of $S_n$.
By Theorem~\ref{RationalEquiThm}, the sequence $\delta_n$ converges weakly on $\PP^1_{\Berk,v}$ to both 
$\mu_{\varphi,v}$ and $\mu_{\psi,v}$, hence 
$\mu_{\varphi,v} = \mu_{\psi,v}$ for all $v \in \cM_k$.
From equations (\ref{mudeterminesgreen}) and (\ref{greendeterminesh}), we deduce that the height functions $\hat{h}_{\varphi}$ and $\hat{h}_{\psi}$ must coincide on $\PP^1(\bar{k})$.

By assumption, neither $\varphi$ nor $\psi$ is conjugate to a map 
defined over $\Qbar$.  
By Theorem~\ref{BakerIsotrivialityTheorem},
a point $x \in \PP^1(\kbar)$ is preperiodic for $\varphi$ (resp. 
$\psi$) if and only if $\hhat_{\varphi}(x) = 0$ (resp. $\hhat_{\psi}(x)=0$).
We conclude that $\varphi$ and $\psi$ have the same set of
preperiodic points in $\kbar$, and therefore the same set of 
preperiodic points in $\CC$.

\smallskip

{\bf Case 3:} $\varphi$ is conjugate to a rational map defined over 
$\Qbar$.

\smallskip

Replacing $\varphi$ by a conjugate, we may assume
without loss of generality that $\varphi$ is defined over $\Qbar$.
We claim that (in these new coordinates) $\psi$ is defined over 
$\Qbar$ as well, so that we are back in Case 1.

As in Case 2, all $a_n$'s are defined over $\kbar$, where $k$ is the finitely generated field extension of $\Qbar$
generated by the coefficients of $\varphi$ and $\psi$.
We may assume that $k/\QQ$ is transcendental, since otherwise we're done.  So as in Case 2, $k$ can be endowed with a product formula structure with respect to which all places $v \in \cM_k$ are non-archimedean and the constant field of $k$ is $\Qbar$.
For each $v \in \cM_k$, let $\delta_n$ be the discrete probability measure on $\PP^1_{\Berk,v}$ supported equally on the 
elements of $S_n$, where $S_n$ is defined as in (\ref{eq:Sn2}).
By Theorem~\ref{RationalEquiThm}, the sequence $\delta_n$ converges weakly on $\PP^1_{\Berk,v}$ to both 
$\mu_{\varphi,v}$ and $\mu_{\psi,v}$, hence 
$\mu_{\varphi,v} = \mu_{\psi,v}$ for all $v \in \cM_k$.

Since $\varphi$ is defined over the constant field $\Qbar$ of $k$, 
$\varphi$ has good reduction at every $v \in \cM_k$.
Equivalently, $\mu_{\varphi,v}$ is a point mass supported at
the Gauss point of $\PP^1_{\Berk,v}$ for all $v \in \cM_k$.
As $\mu_{\varphi,v} = \mu_{\psi,v}$ for all $v \in \cM_k$, we deduce
from Proposition~\ref{BakerIsotrivialityProp}
that $\psi$ has good reduction at every $v \in \cM_k$
and hence is defined over the constant field $\Qbar$ of $k$,
as claimed.
\end{proof}


\subsection{Shared Julia sets and further consequences}
Corollary \ref{DynamicalCor} states that if rational functions $\varphi$ and $\psi$ have infinitely many preperiodic points in common, then they must have the same Julia set.

\begin{proof}[Proof of Corollary \ref{DynamicalCor}]
Assume that $\varphi$ and $\psi$ each have degree $>1$ and that they have infinitely many preperiodic points in common.  From Theorem \ref{DynamicalTheorem}, they share all of their preperiodic points.  

It is well known that repelling periodic points are dense in the Julia set $J(\varphi)$ (see \cite[Theorem 14.1]{MilnorDynamicsBook}), so clearly $J(\varphi) \subset \overline{{\rm Preper}(\varphi)}$.  
Furthermore, the Julia set has no isolated points by \cite[Corollary 4.11]{MilnorDynamicsBook}, so every point of $J(\varphi)$ is an accumulation point of ${\rm Preper}(\varphi)$.  
On the other hand, preperiodic points form a discrete subset of the Fatou set (see \cite[Proposition 2]{PetscheSIntegral} for a proof).
We conclude that the Julia set of a rational map $\varphi$ of degree at least $2$ is equal to the set of accumulation points of ${\rm Preper}(\varphi)$.  
From the equality ${\rm Preper}(\varphi) = {\rm Preper}(\psi)$, we may therefore deduce that $J(\varphi) = J(\psi)$.  
\end{proof}

Much is known about rational functions in one complex variable which share the same Julia set; see for example \cite{Beardon} (for the polynomial case) and \cite{LevinPrzy}.  For example, if $\varphi(z) = z^2 + c$ and $\psi(z) = z^2 + c'$ with $c \neq c'$, then $\varphi$ and $\psi$ have distinct Julia sets; see \cite[\S4]{Beardon}.  For polynomials defined over a number field\footnote{See \cite[\S{3}]{GTZhang} for a recent generalization to polynomials with arbitrary complex coefficients.}, we use \cite[Corollary 25]{KawSilv} (which is itself based on the classification in \cite{Beardon}) to strengthen the conclusion of Theorem \ref{DynamicalTheorem}: 

\begin{theorem}
\label{RefinedDynamicalTheorem}
Let $\varphi, \psi \in\Qbar[z]$ be polynomials of degrees at least 2 with algebraic coefficients, and fix an embedding of $\Qbar$ into $\CC$.  Then the set of points which are preperiodic for both $\varphi$ and 
$\psi$ is infinite if and only if there are positive integers $m,n$ and a linear fractional transformation $\lambda$ defined over $\Qbar$ such that $\tilde{\varphi} := \lambda^{-1}\varphi \lambda$ and $\tilde{\psi} := \lambda^{-1}\psi \lambda$ satisfy one of the following:
\begin{enumerate}
\item $\tilde{\varphi}(z) = z^n$ and $\tilde{\psi}(z) = \eta z^m$ for some root of unity $\eta$.  In this case $J(\tilde{\varphi})=J(\tilde{\psi})$ is the complex unit circle.
\item $\tilde{\varphi}(z) = \pm T_n(z)$ and $\tilde{\psi}(z) = \pm T_m(z)$, where $T_n(z)$ is the $n^{\rm th}$ Tchebycheff polynomial.  In this case
$J(\tilde{\varphi})=J(\tilde{\psi}) = [-2,2]$.
\item $J(\tilde{\varphi}) = J(\tilde{\psi})$ has a Euclidean symmetry group which is cyclic of order $k$ (consisting of rotations about the origin in $\CC$) and there are $k^{\rm th}$ roots of unity $\eta_1$ and $\eta_2$ and a polynomial $\gamma \in \Qbar[z]$ such that 
$\eta_1 \tilde{\varphi}(z)$ and $\eta_2 \tilde{\psi}(z)$
are both iterates of $\gamma$.
\end{enumerate}  
\end{theorem}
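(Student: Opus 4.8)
The plan is to derive the statement from Theorem~\ref{DynamicalTheorem} together with the arithmetic refinement \cite[Corollary 25]{KawSilv} of Beardon's classification \cite{Beardon} of polynomials with a common Julia set; fix throughout a number field $k$ over which $\varphi$ and $\psi$ are both defined. I would treat the ``if'' direction first. Since ${\rm Preper}$ is equivariant under conjugation, it is enough to show ${\rm Preper}(\tilde\varphi) = {\rm Preper}(\tilde\psi)$, and this set is infinite by \cite[\S{6.2}]{BeardonBook}. The key observation is that in each of the three cases the polynomials $\tilde\varphi$ and $\tilde\psi$ have the \emph{same} $v$-adic filled Julia set $K_v$ for every place $v$ of $k$; since for a polynomial $f$ the Call--Silverman canonical height is $\hhat_f(z) = \sum_{v\in\cM_k} N_v\, g_{K_v(f)}(z)$, where $g_{K_v(f)}$ denotes the Green's function of $K_v(f)$ relative to $\infty$ (a function of the set $K_v(f)$ alone, by the uniqueness assertions in Lemmas~\ref{CpxGreenLemma} and~\ref{BerkGreenLemma}), this forces $\hhat_{\tilde\varphi} = \hhat_{\tilde\psi}$ on $\PP^1(\Qbar)$, hence ${\rm Preper}(\tilde\varphi) = {\rm Preper}(\tilde\psi)$ because the preperiodic points of a polynomial over $\Qbar$ are exactly the points of canonical height $0$. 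To verify the ``same $K_v$'' claim: in case (1), $z^n$ and $\eta z^m$ have $K_v = \cD(0,1)$ at every $v$, since $\eta$ is a root of unity and so $|\eta|_v = 1$; in case (2), $\pm T_n$ is monic with integer coefficients, so $K_v = \cD(0,1)$ at every finite $v$ and $K_\infty = [-2,2]$, independently of $n$ and of the sign; in case (3), $\eta_1$ and $\eta_2$ lie in the order-$k$ rotation group which is the symmetry group of $J(\gamma)$ and are roots of unity, so a uniqueness argument for Green's functions (archimedean and non-archimedean alike) gives $K_v(\tilde\varphi) = K_v(\gamma^{\circ a}) = K_v(\gamma) = K_v(\gamma^{\circ b}) = K_v(\tilde\psi)$ for all $v$. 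The stated descriptions of $J(\tilde\varphi)=J(\tilde\psi)$ (the unit circle, the segment $[-2,2]$, or a set with cyclic rotational symmetry) are classical.

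For the ``only if'' direction, suppose ${\rm Preper}(\varphi)\cap{\rm Preper}(\psi)$ is infinite. By Theorem~\ref{DynamicalTheorem} we obtain ${\rm Preper}(\varphi) = {\rm Preper}(\psi)$, and, as remarked after Corollary~\ref{DynamicalCor}, the proof shows moreover that the canonical heights coincide, $\hhat_\varphi = \hhat_\psi$ on $\PP^1(\Qbar)$. Feeding this into \cite[Corollary 25]{KawSilv} — which classifies pairs of polynomials with algebraic coefficients having identical canonical heights, refining Beardon's classification so that the normalizing coordinate is defined over $\Qbar$ — produces a linear fractional transformation $\lambda$ over $\Qbar$ carrying $\varphi$ and $\psi$ into one of the three normal forms (1)--(3), with the asserted Julia-set descriptions. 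This completes the equivalence.

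The hard part is precisely the descent to $\Qbar$: Beardon's classification over $\CC$ yields only a complex normalizing coordinate and complex symmetry data (in particular a complex center for the rotational symmetry group), and showing that all of it may be taken over $\Qbar$ requires a Galois-equivariance argument. This is exactly what \cite[Corollary 25]{KawSilv} supplies, and I would import it as a black box. The genuinely new content is then the reduction via Theorem~\ref{DynamicalTheorem} and Corollary~\ref{DynamicalCor} together with the elementary local computations in the ``if'' direction; among these, case (3) is the most delicate, since one must check that post-composing $\gamma^{\circ a}$ with a root-of-unity rotation lying in the symmetry group of $J(\gamma)$ alters neither the filled Julia set at any place nor, therefore, the canonical height.
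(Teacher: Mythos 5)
Your proposal is correct and follows essentially the same route as the paper: the proof of Theorem~\ref{DynamicalTheorem} (equidistribution at every place giving $\mu_{\varphi,v}=\mu_{\psi,v}$, hence $\hhat_\varphi=\hhat_\psi$ on $\PP^1(\Qbar)$) combined with \cite[Corollary 25]{KawSilv} taken as a black box. Your hands-on verification of the ``if'' direction via local filled Julia sets is a harmless supplement --- the paper leaves that direction to the equivalence in \cite[Corollary 25]{KawSilv}, since equal canonical heights immediately give equal (infinite) sets of preperiodic points; just note that in case (3) the clean way to see $K_v(\tilde{\varphi})=K_v(\gamma^{\circ a})$ at every place is the algebraic symmetry identity $\gamma\circ\sigma=\sigma^{\deg\gamma}\circ\gamma$ for the rotation $\sigma(z)=\eta_1 z$, which makes the two orbits differ only by rotations of absolute value one.
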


\begin{proof}
If $\varphi,\psi$ are defined over a number field $k$ and $\varphi,\psi$ have infinitely many preperiodic points in common, then the proof of Theorem~\ref{DynamicalTheorem} shows that 
$\mu_{\varphi,v} = \mu_{\psi,v}$ for all places $v$ of $k$ and hence 
that $\hhat_{\varphi} = \hhat_{\psi}$.
Combined with \cite[Corollary 25]{KawSilv}, this proves
Theorem~\ref{RefinedDynamicalTheorem}.
\end{proof}

\medskip

\begin{remark}
If $\varphi,\psi \in \CC(T)$ are Latt{\`e}s maps coming from multiplication by $2$ on the elliptic curves $E,E'$,
respectively, then $\varphi$ and $\psi$ have infinitely many preperiodic points in common if and only if $E$ and $E'$
are isogenous.  This follows from \cite[Proof of Lemma 3.7]{BogomTschink}.
\end{remark}

\medskip

If we replace the sequence $a_n$ of preperiodic points
in the proof of Theorem~\ref{DynamicalTheorem} with a sequence
of points such that $\lim_{n \to \infty} \hhat_{\varphi}(a_n)= 
\lim_{n \to \infty} \hhat_{\psi}(a_n)=0$, these observations also prove the following result:

\begin{theorem}
\label{DynamicalTheoremBis}
Let $\varphi,\psi \in \Qbar(T)$ be rational functions of degree at least 2, and assume that the canonical height functions 
$\hhat_\varphi$ and $\hhat_\psi$ on $\PP^1(\Qbar)$ are distinct.
(This will be true, in particular, if the complex Julia sets of $\varphi$
and $\psi$ are distinct for some embedding of $\Qbar$ into $\CC$.)
Then there is a real number $\varepsilon > 0$ such that 
$\hhat_{\varphi}(a) + \hhat_{\psi}(a) \geq \varepsilon$ for all
but finitely many $a \in \Qbar$.
\end{theorem}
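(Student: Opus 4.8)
The plan is to argue by contradiction, running Case~1 of the proof of Theorem~\ref{DynamicalTheorem} with ``preperiodic point'' replaced throughout by ``point of small canonical height.''  Suppose the conclusion fails.  Since $\hhat_\varphi,\hhat_\psi\geq 0$, the negation produces an infinite sequence of \emph{distinct} points $a_n\in\Qbar$ with $\hhat_\varphi(a_n)\to 0$ and $\hhat_\psi(a_n)\to 0$ (for each $n$ choose $a_n\notin\{a_1,\dots,a_{n-1}\}$ with $\hhat_\varphi(a_n)+\hhat_\psi(a_n)<1/n$).  Let $k$ be a number field containing the coefficients of $\varphi$ and $\psi$.  Passing to a subsequence (still denoted $a_m$), we may assume the $a_m$ lie in pairwise distinct $\Gal(\Qbar/k)$-orbits; let $T_m$ be the orbit of $a_m$ and set $S_n=\bigcup_{m=1}^n T_m$, a $\Gal(\Qbar/k)$-stable finite subset of $\Qbar$.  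For each place $v$ of $k$, let $\delta_n$ be the discrete probability measure on $\PP^1_{\Berk,v}$ supported equally on $S_n$.

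The first step is to check that $\{S_n\}$ satisfies the hypotheses of Theorem~\ref{RationalEquiThm}.  Since the $T_m$ are nonempty and pairwise disjoint, $|S_n|=\sum_{m=1}^n|T_m|\geq n\to\infty$.  Using that $\hhat_\varphi$ is constant on $\Gal(\Qbar/k)$-orbits, $\hhat_\varphi(S_n)=\dfrac{\sum_{m=1}^n|T_m|\,\hhat_\varphi(a_m)}{\sum_{m=1}^n|T_m|}$, a weighted average of the null sequence $\hhat_\varphi(a_m)$ over weights whose partial sums diverge; splitting the numerator into a fixed head and an arbitrarily small tail shows $\hhat_\varphi(S_n)\to 0$, and likewise $\hhat_\psi(S_n)\to 0$.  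By Theorem~\ref{RationalEquiThm}, for every $v\in\cM_k$ the measures $\delta_n$ converge weakly on $\PP^1_{\Berk,v}$ both to $\mu_{\varphi,v}$ and to $\mu_{\psi,v}$; since $\PP^1_{\Berk,v}$ is compact, weak limits are unique, so $\mu_{\varphi,v}=\mu_{\psi,v}$ for all $v$.

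From here I would finish as in Case~1 of Theorem~\ref{DynamicalTheorem}.  By~(\ref{mudeterminesgreen}), each of $\mu_{\varphi,v}$ and $g_{\varphi,v}$ determines the other through the same normalized Laplace equation, so $g_{\varphi,v}=g_{\psi,v}$ for all $v$; then~(\ref{greendeterminesh}) gives $\hhat_\varphi(x)+\hhat_\varphi(y)=\hhat_\psi(x)+\hhat_\psi(y)$ for all $x\neq y$ in $\PP^1(\Qbar)$.  Separating variables (evaluate at three distinct points) forces $\hhat_\varphi-\hhat_\psi$ to be a constant on $\PP^1(\Qbar)$ equal to its own negative, hence $\hhat_\varphi=\hhat_\psi$, contradicting the hypothesis.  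For the parenthetical claim I would prove the contrapositive $\hhat_\varphi=\hhat_\psi\Rightarrow J(\varphi)=J(\psi)$: since over a number field $\hhat_\varphi(x)=0$ if and only if $x$ is preperiodic, the two maps then share the same infinite set $Z$ of preperiodic points, and applying Theorem~\ref{RationalEquiThm} to a sequence $S_n$ consisting of $n$ distinct $\Gal(\Qbar/k)$-orbits inside $Z$ (so $\hhat_\varphi(S_n)=0$ and $|S_n|\to\infty$) again yields $\mu_{\varphi,v}=\mu_{\psi,v}$ for all $v$; at the archimedean place determined by the chosen embedding this reads $J(\varphi)=\supp\mu_{\varphi,v}=\supp\mu_{\psi,v}=J(\psi)$.

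I do not expect a serious obstacle: the argument is a transcription of Case~1 of Theorem~\ref{DynamicalTheorem}, invoking only Theorem~\ref{RationalEquiThm}, the identities~(\ref{mudeterminesgreen}) and~(\ref{greendeterminesh}), the identification of $\supp\mu_{\varphi,v}$ with the Julia set, and the fact that $\hhat_\varphi(x)=0$ characterizes preperiodicity over number fields.  The one point requiring a little care --- rather than a difficulty --- is the passage from $\hhat_\varphi(a_n)\to 0$ for individual points to $\hhat_\varphi(S_n)\to 0$ for the enlarging Galois-stable sets, since the orbits $T_m$ may grow in size arbitrarily fast; the weighted-average estimate in the second paragraph handles this cleanly.
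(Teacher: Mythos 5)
Your proposal is correct and follows essentially the same route as the paper, which proves Theorem~\ref{DynamicalTheoremBis} precisely by rerunning Case~1 of the proof of Theorem~\ref{DynamicalTheorem} with the preperiodic points replaced by a sequence of points of small canonical height. You also correctly supply the two details the paper leaves implicit: guaranteeing $|S_n|\to\infty$ and $\hhat_\varphi(S_n),\hhat_\psi(S_n)\to 0$ (via distinct Galois orbits and the weighted-average estimate), and replacing the substitution ``$y=a_n$ of height zero'' by the three-point separation argument to pass from $\hhat_\varphi(x)+\hhat_\varphi(y)=\hhat_\psi(x)+\hhat_\psi(y)$ to $\hhat_\varphi=\hhat_\psi$.
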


\bibliographystyle{alpha}
\bibliography{BDM}


\end{document}